\newtheorem{theorem}{Theorem}[section]
\newtheorem{proposition}{Proposition}[section]
\newtheorem{lemma}{Lemma}[section]
\author{Micah B. Milinovich \and Nathan Ng}
\address{Department of Mathematics, University of Mississippi, University, MS 38677 USA}
\email{mbmilino@olemiss.edu}
\thanks{The first author is supported in part by an AMS-Simons travel grant and the NSA Young Investigator grant H98230-13-1-0217.  The second author is supported in part by an NSERC Discovery grant.}
\address{Department of Mathematics and
Computer Science, University of Lethbridge, Lethbridge, AB Canada T1K 3M4}
\email{nathan.ng@uleth.ca}
\subjclass[2000]{Primary 11M41; Secondary 11S40}
\title[Simple zeros of modular $L$-functions]{Simple zeros of modular $L$-functions}
\begin{document}
\begin{abstract}
Let $f$ be a primitive holomorphic cusp form of weight $k$, level $q$, and character $\chi$, and let $L(s,f)$ be its associated $L$-function. Assuming the generalized Riemann hypothesis for $L(s,f)$, we prove that the number of simple zeros $\rho_{\!_f}=\frac{1}{2}+i\gamma_{\!_f}$ of $L(s,f)$ satisfying $0<\gamma_{\!_f}\leq T$ is greater than a positive constant times $T(\log T)^{-\varepsilon}$ for any $\varepsilon >0$ and $T$ sufficiently large.
\end{abstract}
\maketitle

\section{Introduction}

Let $k$ and $q$ be positive integers, and let $\chi$ be a Dirichlet character modulo $q$. Further, let $S_k\big(\Gamma_0(q),\chi\big)$  denote the complex vector space of holomorphic modular forms of weight $k$ and character $\chi$ for the congruence subgroup $\Gamma_0(q)$, and let $H_k(q,\chi)$ denote the subset of $S_k\big(\Gamma_0(q),\chi\big)$ consisting of normalized holomorphic newforms. Attached to every $f \in H_k(q,\chi)$ is an $L$-function of the form
\begin{equation*}
\begin{split}
 \label{eq:eulerproduct}
L(s,f) \ \! &=  \ \! \sum_{n=1}^\infty \frac{\lambda_f(n)}{n^s}
\end{split}
\end{equation*}
for $\Re(s)>1$ where  $\lambda_f(n)$ is a multiplicative function. Here, we are normalizing so that $\lambda_f(1)=1$, the critical line of $L(s,f)$ is $\Re(s) =\frac{1}{2}$, and so that Deligne's bound gives $|\lambda_f(n)| \le d(n)$ where $d(n)$ denotes the number of positive divisors of a natural number $n$. In addition to this bound on its coefficients, the function $L(s,f)$ extends to an entire function, has an Euler product, and a functional equation relating $s$ to $1-s$.

\smallskip

The function $L(s,f)$ has  an infinite number of non-trivial zeros in the strip $0 < \Re(s) < 1$. We let $N_{\!_f}(T)$ denote the number of non-trivial zeros $\rho_{\!_f}=\beta_{\!_f}+i\gamma_{\!_f}$ satisfying $0\leq\beta_{\!_f}\leq1$ and $0\leq \gamma_{\!_f} \leq T$, counted with multiplicity. Then it is known that $ N_{\!_f}(T)  \sim  \frac{T}{\pi}  \log T$ as $T\to \infty$.  It is conjectured that, apart from a possible multiple zero at the central point  $s=\frac{1}{2}$, all the non-trivial zeros of $L(s,f)$ are simple. We let $N_{\!_f}^{s}(T)$ denote the number of simple non-trivial zeros of $L(s,f)$ with $0<\gamma_{\!_f} \leq T$. Booker \cite{B} has recently proved that $N_{\!_f}^{s}(T)$ is unbounded. Assuming the generalized Riemann hypothesis, which states that $\beta_{\!_{f}}=\frac{1}{2}$ for each non-trivial zero of $\rho_{\!_{f}}$ of $L(s,f)$, we prove the following quantitative lower bound for the number of simple zeros of  $L(s,f)$.

\begin{theorem} \label{simple} Let  $f\in H_k(q,\chi)$ and assume the generalized Riemann hypothesis for $L(s,f)$. 
 Then, for every $\varepsilon>0$ and sufficiently large $T$, the inequality
  $$N_{\!_f}^{s}(T) \gg  T (\log T)^{-\varepsilon}   $$ 
 holds where the implied constant depends on $f$ and $\varepsilon$.
\end{theorem}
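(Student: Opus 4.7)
The plan is to adapt the mollifier method of Conrey--Ghosh and Conrey--Ghosh--Gonek for $\zeta(s)$ to the setting of $L(s,f)$. The key observation is that at any zero $\rho_{\!_f}$ of multiplicity $m\geq 2$ the derivative $L'(\rho_{\!_f},f)$ also vanishes, so for any Dirichlet polynomial $M(s)=\sum_{n\leq y}b(n)n^{-s}$ of length $y=T^{\theta}$ with $\theta<1$ the sum
\begin{equation*}
S_1(T):=\sum_{0<\gamma_{\!_f}\leq T} L'(\rho_{\!_f},f)\,M(\rho_{\!_f}) \;=\; \sum_{\substack{0<\gamma_{\!_f}\leq T\\ \rho_{\!_f}\text{ simple}}} L'(\rho_{\!_f},f)\,M(\rho_{\!_f})
\end{equation*}
is automatically supported on simple zeros. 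Cauchy--Schwarz then produces
\begin{equation*}
|S_1(T)|^2 \leq N_{\!_f}^{s}(T)\cdot S_2(T), \qquad S_2(T):=\sum_{0<\gamma_{\!_f}\leq T}\bigl|L'(\rho_{\!_f},f)\,M(\rho_{\!_f})\bigr|^2,
\end{equation*}
so the theorem reduces to a sharp lower bound for $|S_1(T)|$ together with an essentially sharp upper bound for $S_2(T)$.

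To evaluate $S_1(T)$ I would realize it as the contour integral of $(L'/L)(s,f)\,L'(s,f)\,M(s)$ around a tall rectangle crossing the critical strip; its residues recover $S_1$ precisely, since the $L'$ factor annihilates the contributions of multiple zeros. On the right vertical side at $\Re(s)=1+\delta$ the integrand expands as an absolutely convergent triple Dirichlet series, and a judicious choice of $b(n)$ (for instance $b(n)=\mu_f(n)P(\log(y/n)/\log y)$ for a polynomial $P$, where $\mu_f$ are the Dirichlet coefficients of $1/L(s,f)$) extracts a main term from the diagonal. The functional equation handles the left vertical side, and under GRH the horizontal integrals are controlled by standard bounds on $L'/L$ off the critical line. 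This analysis is expected to yield $|S_1(T)|\gg T\log T$. For $S_2(T)$ I would apply Gonek's uniform version of Landau's explicit formula to replace the discrete second moment with a continuous mean value of $|L'(\tfrac{1}{2}+it,f)M(\tfrac{1}{2}+it)|^2$ along the critical line, and then estimate the latter via the mean value theorem for Dirichlet polynomials after inserting an approximate functional equation for $L'(s,f)$; under GRH this should give $S_2(T)\ll T(\log T)^{3+\varepsilon}$. Combined with Cauchy--Schwarz and an optimization over $\theta$ and the mollifier coefficients, one recovers $N_{\!_f}^{s}(T)\gg T(\log T)^{-\varepsilon}$.

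The principal obstacle will be the precise evaluation of $S_1(T)$. It requires uniform control of a triple convolution involving $b(n)$, the von Mangoldt coefficients $\Lambda_f(n)$ of $-L'/L$, and the logarithmically weighted coefficients $\lambda_f(n)\log n$ of $-L'$, and the off-diagonal analysis must be carried out uniformly in the mollifier length $y$ nearly up to $y=T$. The Deligne bound $|\lambda_f(n)|\leq d(n)$ is weaker than the trivial bound for the higher divisor functions which appear in fourth-moment calculations, and it is the resulting slack, together with the need to balance the lengths of $M$ and the approximate functional equation, that ultimately forces the factor $(\log T)^{-\varepsilon}$ rather than a clean $T$ or $T/\log T$. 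A secondary difficulty is ensuring that the mean value bound for $S_2(T)$ holds with an exponent of the logarithm small enough to match $|S_1(T)|^2$ after optimization.
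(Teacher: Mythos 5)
Your plan is the natural Conrey--Ghosh--Gonek mollifier strategy, but it stalls at precisely the step that this paper identifies as an open problem. The entire approach hinges on an asymptotic evaluation, with a positive main term of size $T\log T$, of the discrete mollified first moment $S_1(T)=\sum_{0<\gamma_{\!_f}\le T} L'(\rho_{\!_f},f)M(\rho_{\!_f})$. For a degree two $L$-function this is not currently achievable: as the paper remarks after the proof of Theorem \ref{2nd_moment}, the asymptotic evaluation of $\sum_{0<\gamma_{\!_f}\le T} L'(\rho_{\!_f},f)\overline{A(\rho_{\!_f})}$ is a difficult open problem \emph{even when} $A\equiv 1$. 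The obstruction is visible in your own outline: in the contour integral of $(L'/L)(s,f)\,L'(s,f)\,M(s)$, the left edge (after the functional equation) produces dual sums whose length is governed by the analytic conductor $\asymp T^2$, and extracting their contribution requires explicit-formula/Gonek-type evaluations of $\sum_{0<\gamma_{\!_f}\le T}x^{\rho_{\!_f}}$ paired with shifted convolution sums $\sum_{n\le x}\lambda_f(n)\overline{\lambda_f(n+r)}$ uniformly in the shift and in the mollifier length --- input that is unavailable for $GL(2)$ at the required strength, and which GRH does not supply (GRH gives upper bounds, not asymptotics, for such first moments). A mollified second moment $S_2(T)$ for $L'(\rho_{\!_f},f)$ is in the same difficulty class as the fourth moment of $\zeta'(\rho)$, which the paper also flags as out of reach; under GRH one can hope for upper bounds of Soundararajan type, but not the log-saving asymptotics your optimization over $\theta$ and $b(n)$ presupposes. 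There is also a quantitative mismatch: even granting your claimed bounds $|S_1(T)|\gg T\log T$ and $S_2(T)\ll T(\log T)^{3+\varepsilon}$, Cauchy--Schwarz gives only $N_{\!_f}^{s}(T)\gg T(\log T)^{-1-\varepsilon}$, short of the stated $T(\log T)^{-\varepsilon}$; to reach the theorem along your route the mollifier would have to save additional logarithms in $S_2$, making the inaccessible evaluations harder still.

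For contrast, the paper avoids first moments and mollifiers entirely. It proves an \emph{unmollified} second moment asymptotic $\sum_{0<\gamma_{\!_f}\le T}|L'(\rho_{\!_f},f)|^2\asymp T\log^4T$ with explicit upper and lower constants, the lower bound coming from the fact that the two halves of the approximate functional equation for $L'$ have asymptotically different mean squares ($\tfrac{5}{24}$ versus $\tfrac{29}{24}$), so the triangle inequality in $\ell^2$ yields a positive lower bound; crucially this only needs diagonal information (Montgomery--Vaughan) and Rankin--Selberg averages of $|\lambda_f(n)|^2$, with no shifted convolutions. It then combines this with a GRH upper bound $\sum_{0<\gamma_{\!_f}\le T}|L'(\rho_{\!_f},f)|^{2\ell}\ll T(\log T)^{\ell^2+2\ell+1+\varepsilon}$ (Soundararajan's method) and applies H\"older with $\ell=1+\varepsilon/2$, since the second moment is automatically supported on simple zeros. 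If you want a workable path, that is the structure to follow.
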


 \smallskip

The  generalized Riemann hypothesis and the conjecture that the non-real zeros of $L(s,f)$ are all simple are just two of many important open problems concerning the distribution of its non-trivial zeros.  For instance, it has also been conjectured that the the multiset of positive imaginary parts of the zeros of $L(s,f)$ for all $f \in H_k(q,\chi)$ are linearly independent over the rationals. In particular, this linear independence conjecture implies the previous conjecture on the simplicity of the non-real zeros of $L(s,f)$.  Moreover, it is believed that, after normalizing, the level spacing distribution between the ordinates of the non-trivial zeros of $L(s,f)$ should agree with the spectral statistics from the Gaussian Unitary Ensemble (GUE). Among other things, the GUE conjecture  implies that almost all zeros of $L(s,f)$ are simple.

 \smallskip
 
Our main theorem provides some evidence for the conjecture that the non-real zeros of $L(s,f)$ are all simple. Moreover, it seems likely that an analogue of Theorem \ref{simple} can be proved for $L$-functions attached to cuspidal Maass newforms. Prior to Booker's result mentioned above, there were only a few examples of degree two $L$-functions that were known to possess infinitely many simple zeros. The first example of such an $L$-function is contained in  an article of Conrey and Ghosh \cite{CG} where they show  that  the $L$-function associated to the Ramanujan $\Delta$-function posseses infinitely many simple zeros. Cho \cite{Cho1,Cho2} has modified their method and found a few examples of Maass form $L$-functions which also possess infinitely many simple zeros. In each of these cases, one must first verify that the $L$-function in question has at least one simple zero and, in all of the mentioned examples, this was done computationally. The methods of \cite{Cho1,Cho2,CG} actually show that if $L(s,f)$ has one simple non-real zero, then for every $\varepsilon >0$, 
$N_{\!_f}^{s}(T) \gg T^{\frac{1}{6}-\varepsilon}$ for arbitrarily large values of $T$. In his recent paper, Booker \cite{B} builds upon the ideas of Conrey and Ghosh and unconditionally proves that $L(s,f)$ possesses infinitely many simple zeros for any  $f \in H_k(q,\chi)$.   Though the argument  in \cite{B} does not provide a quantitative lower bound for $N_{\!_f}^s(T)$, one nice aspect of the proof is that it does not require that one first computationally verify that $L(s,f)$ has at least one simple zero. Therefore, the result in \cite{B}  can be applied to an infinite class of $L$-functions.

\subsection{Counting zeros of $L$-functions}

Several other approaches have been developed to count the number of zeros of an $L$-function in a given range with certain multiplicities. We note that none of these methods has led to a proof (even under the assumption of the generalized Riemann hypothesis) that infinitely many non-trivial zeros of $L(s,f)$ are simple for an arbitrary holomorphic newform $f \in H_k(q,\chi)$.

\smallskip

Selberg's method \cite{Se1} shows that a positive proportion of the non-trivial zeros of the Riemann zeta-function, $\zeta(s)$, are on the critical line. This is accomplished by counting sign changes of Hardy's function $Z(t)$ and every zero detected by this method has odd multiplicity.  Hafner has adapted Selberg's method to the $L$-functions attached to Hecke eigenforms of the full modular group and to Maass wave forms, and proved that a positive proportion of the zeros of these $L$-functions are on the critical line have odd multiplicity (see \cite{H1,H2,H3}). More recently, Rezvyakova \cite{Rez} has extended the techniques of Hafner and Selberg to the $L$-functions attached to any $f \in H_k(q,\chi)$.

\smallskip

In Montgomery's important article on the pair correlation of the zeros of the Riemann zeta-function \cite{Mo1}, a conditional approach to detecting simple zeros of $\zeta(s)$ was introduced. He showed that the Riemann hypothesis implies that two-thirds of the non-trivial zeros of $\zeta(s)$ are simple. This proportion  can be improved slightly (see \cite{ChG}), and these techniques can also be applied to the family of Dirichlet $L$-functions, counting zeros in $q$-aspect (see \cite{O} and \cite{CLLR}).  Moreover, Montgomery's pair correlation conjecture implies that almost all zeros of $ \zeta(s)$ are simple. Rudnick and Sarnak \cite{RS} generalized Montgomery's results to $n$-level correlations of zeros of principal automorphic $L$-functions.  Using the results in  \cite{RS}, it is possible to show that  a positive proportion of the zeros of $L(s,f)$ for $f \in H_k(q,\chi)$ have multiplicity less than or equal to two under the assumption of the generalized Riemann hypothesis for $L(s,f)$.

\smallskip
 
The Riemann zeta-function was the first example of an $L$-function known to have a simple nontrivial zero. In 1974, Levinson \cite{L} introduced a new method which established at least one-third of the zeros of $\zeta(s)$ lie on the critical line. Heath-Brown  \cite{HB} and Selberg (unpublished) independently observed that the method actually detects simple zeros. Bauer \cite{Ba} later showed that Levinson's method works in the case of Dirichlet $L$-functions. Farmer \cite{F} adapted Levinson's method to the case of $L$-functions attached to Hecke eigenforms for the full modular group. He was able to show that a positive proportion of its zeros of these $L$-functions have multiplicity less than or equal to $3$. Recently, Conrey, Iwaniec, and Soundararajan \cite{CIS} combined their asymptotic large sieve with Levinson's method to obtain lower bounds for the proportion of simple zeros on the critical line for the family of the twists by primitive Dirichlet characters of a fixed automorphic $L$-function of degree 1, 2, or 3.

\smallskip

In a series of paper beginning in the 1980's, Conrey, Ghosh, and Gonek developed a technique to prove the existence of simple zeros of $L$-functions using the non-vanishing of their derivatives. Given an $L$-function $L(s)$ with non-trivial zeros $\rho$, they estimate mean-values of the form 
\[
 \sum_{0 < \Im(\rho) \leq T} \!\!\! L'(\rho)
 \]
and related sums. Indeed, the argument of Conrey and Ghosh in \cite{CG} is a variant of this.  In \cite{CGG3} and \cite{CGG1}, Conrey, Ghosh, and Gonek apply this idea to $\zeta(s)$ and to $\zeta_K(s)$, the Dedekind zeta-function of a quadratic number field $K$. These three authors, and later Bui and Heath-Brown, proved that positive proportion results for the number of simple zeros can be obtained for these zeta-functions if additional hypotheses such as the Riemann hypothesis and generalized Lindel\"{o}f hypothesis are assumed (see \cite{BHB}, \cite{CGG2}, and \cite{CGG4}). 

\smallskip

Finally, we remark that there are a number of results which detect simple zeros in families of $L$-functions at the central point, and in some cases these results have important arithmetic applications. For instance, it is known that for a fixed $L$-function $L(s,f)$ with $f\in H_k(q,\chi_0)$, there are infinitely many twists by quadratic characters $\chi_{D}$ such that $L(s,f\otimes\chi_{D})$ has a simple zero at $s=\frac{1}{2}$. Here $\chi_0$ is the principal character (mod $q$). This was proven, independently, by Murty and Murty \cite{MM} when $q=2$ and by Bump, Friedberg, and Hoffstein \cite{BFH} for all $q$. This analytic condition is required in Kolyvagin's work on finiteness of the Mordell-Weil and Tate-Shafarevich groups of an elliptic $E$ over $\mathbb{Q}$ when $L(\frac{1}{2},E)\ne 0$, where $s=\frac{1}{2}$ is the central point.  Another important result, due to Kowalski and Michel \cite{KM}, is that at least 7/16 of the $L$-functions $L(s,f)$ for $f \in H_2(q,\chi_0)$ with $q$ prime and $\varepsilon_{_f}=-1$  have a simple zero at $s=\frac{1}{2}$. This result can be used to give a lower bound for the rank of $J_0(q)$, the Jacobian of the modular curve $X_0(q)$.

 \smallskip

\subsection{Auxiliary Theorems}  We now state two key mean-value estimates for the derivative of $L(s,f)$ which are used to deduce Theorem \ref{simple}.

\begin{theorem}\label{2nd_moment}
Let  $f\in H_k(q,\chi)$ and assume the generalized Riemann hypothesis for $L(s,f)$. Then
$$ \big(A_{\!_f} + o(1)\big) \ \! T \log^4 \! \Big( \frac{\sqrt{q} \ \! T}{2 \pi}\Big) \leq \sum_{0<\gamma_{\!_f}\leq T} \big| L'(\rho_{\!_{f}},f) \big|^{2} \leq \big(B_{\!_f}+o(1) \big) \ \! T \log^4 \! \Big( \frac{\sqrt{q} \ \! T}{2 \pi}\Big) $$
when $T$ is sufficiently large where the $o(1)$ terms are $O( 1/\sqrt{ \log\log T \ \! }).$ Here
$$ A_{f}=\left( \frac{17-\sqrt{145}}{12\pi} \right) c_{f} \quad \text{and} \quad B_{f}=\left( \frac{17+\sqrt{145}}{12\pi} \right) c_{f}$$
where $c_{f}$ is a positive constant defined by
\begin{equation}
 \label{eq:cfdefinition}
  c_{f} = \frac{(4\pi)^k}{\Gamma(k)} \frac{||f||^2}{\textup{vol}(\Gamma_0(q)\backslash \mathfrak{h}) },
  \end{equation}
with $\mathfrak{h}$ denoting the Poincar\'{e} upper half-plane and the norm on $f$ defined by 
$$ ||f||^2 = \int_{\Gamma_0(q)\backslash \mathfrak{h}} |f(z)|^2   \ \! y^k \ \! \frac{dx \ \! dy}{y^2} $$
\end{theorem}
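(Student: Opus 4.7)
The plan is to adapt the approach previously developed by the second-named author to prove analogous bounds for the sum $\sum_{0<\gamma\leq T}|\zeta'(\rho)|^{2}$. The core of the argument is a pair of Cauchy--Schwarz inequalities together with the asymptotic evaluation of two mean values involving a Dirichlet polynomial mollifier.

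First I would introduce a mollifier of the form
$$M(s) = \sum_{n\leq y}\frac{\mu_f(n)\, P\!\left(\tfrac{\log(y/n)}{\log y}\right)}{n^s},$$
where $\mu_f(n)$ are the Dirichlet coefficients of $1/L(s,f)$, $y = T^{\theta}$ for a parameter $\theta \in (0,1)$ that is eventually sent to $1$, and $P$ is a polynomial on $[0,1]$ satisfying $P(0)=0$ and $P(1)=1$. The polynomial $P$ plays the role of a shape function to be optimized at the end of the argument.

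Next I would establish asymptotic formulas, uniform in $y$ up to nearly $T$, for the two sums
$$\Sigma_{1}(T) = \sum_{0<\gamma_f\leq T} L'(\rho_f, f)\, \overline{M(\rho_f)} \qquad\text{and}\qquad \Sigma_{2}(T) = \sum_{0<\gamma_f\leq T} |M(\rho_f)|^{2}.$$
Both would be computed by Gonek's uniform version of the Landau explicit formula, by expressing each as a contour integral of $(L'/L)(s,f)$ against an appropriate product of $L'$, $M$, and $\overline{M}$ around a rectangle that encloses the zeros with $0<\gamma_f\leq T$, and then shifting the contour into the half-planes of absolute convergence. On the right side of the contour the resulting Dirichlet series is evaluated via the Rankin--Selberg asymptotic $\sum_{n\leq x}|\lambda_f(n)|^{2}\sim c_f x$, which is how the constant $c_f$ enters the final answer. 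On the left side the functional equation turns the contribution into a similar sum in the dual form $\tilde f$, again controlled by Rankin--Selberg, and repeated integration by parts in the log-factors of $M$ produces the power $\log^{4}(\sqrt{q}\,T/2\pi)$.

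The Cauchy--Schwarz inequality
$$\sum_{0<\gamma_f\leq T}|L'(\rho_f,f)|^{2} \;\geq\; \frac{|\Sigma_{1}(T)|^{2}}{\Sigma_{2}(T)}$$
then produces the lower bound with a constant depending on $P$, expressible as a quotient of explicit quadratic forms in the coefficients of $P$. A parallel Cauchy--Schwarz argument, applied to a short Dirichlet polynomial approximation to $L'(s,f)$ on the critical line (obtained from an approximate functional equation that is available under GRH), yields a complementary upper bound of the same shape. Letting $\theta \to 1^{-}$ and optimizing $P$ reduces the problem to a one-parameter extremal problem whose two extreme values are the roots $(17 \pm \sqrt{145})/12$ of an explicit quadratic built from the main terms; coupling this with the factor $1/\pi$ arising from the density $N_f(T) \sim (T/\pi)\log T$ yields the stated constants $A_f$ and $B_f$.

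The principal technical obstacle lies in the uniform asymptotic evaluation of $\Sigma_1(T)$: after contour shifting it becomes a correlation sum of the shape $\sum_{m,n} \lambda_f(m)\mu_f(n)\cdots$ weighted by smooth factors, and extracting the correct leading constant while controlling the off-diagonal contribution requires combining Rankin--Selberg with uniform bounds for $(L'/L)(s,f)$ near the critical line under GRH. A secondary but nontrivial difficulty is obtaining the refined error term $O(1/\sqrt{\log\log T\,})$ stated in the theorem, which forces one to track lower-order terms throughout both the Rankin--Selberg step and Gonek's formula.
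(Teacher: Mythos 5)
Your plan founders at its central step. The asymptotic evaluation of $\Sigma_1(T)=\sum_{0<\gamma_f\le T}L'(\rho_f,f)\,\overline{M(\rho_f)}$ is not available: for a degree two $L$-function this is a difficult open problem \emph{even in the case $M\equiv 1$} (the paper says exactly this in its first remark after the proof, which is why it does not use the mollifier/Cauchy--Schwarz route of Conrey--Ghosh--Gonek or Rudnick--Soundararajan). The reason is that when you shift the contour and apply the functional equation, the left-hand edge does not reduce to a Rankin--Selberg diagonal: you are faced with discrete averages of the oscillating factor $\psi_f(\rho_f)$ against Dirichlet polynomials of length about $T$, equivalently with shifted-convolution information of the type $\sum_{n\le x}\lambda_f(n)\overline{\lambda_f(n+r)}$, which the bounds $\sum_{n\le x}|\lambda_f(n)|^2\sim c_f x$ and GRH estimates for $(L'/L)(s,f)$ do not supply. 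In difficulty this sum is comparable to the fourth moment of $\zeta'(\rho)$, not to Gonek's second moment, so ``uniform asymptotics for $\Sigma_1$'' is not a technical obstacle to be worked around but the missing theorem. Your proposed upper bound is also not a ``parallel Cauchy--Schwarz argument'': Cauchy--Schwarz with a mollifier only gives lower bounds, and an upper bound requires a genuine mean-value theorem for an approximation to $L'$ at the zeros. Finally, the constants $(17\pm\sqrt{145})/12$ do not arise from any extremal problem in $P$ and $\theta$; there is no reason a mollifier optimization would reproduce them.

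For comparison, the paper's proof avoids all cross terms. It writes the approximate functional equation $L'(\rho_f,f)=\sum_{n\le X}\alpha_f(n)n^{-\rho_f}+\psi_f(\rho_f)\sum_{n\le X}\beta_{\bar f,X}(n)n^{\rho_f-1}+\mathcal{E}(\rho_f,f)$ with $X=\sqrt{q}\,T/2\pi$, evaluates the two \emph{diagonal} mean squares $\mathcal{A}_f(T)$ and $\mathcal{B}_f(T)$ of these Dirichlet polynomials over the zeros by a contour integral of $(L'/L)(s,f)A(s)\overline{A}(1-s)$ plus the Montgomery--Vaughan mean-value theorem (no Landau--Gonek formula, no shifted convolutions, only $\sum_{n\le x}|\lambda_f(n)|^2$ and $\sum_{n\le x}|\lambda_f(n)|$), shows $\mathcal{A}_f\sim\frac{5}{24\pi}c_fT\log^4X$ and $\mathcal{B}_f\sim\frac{29}{24\pi}c_fT\log^4X$, bounds the error term in mean square, and then uses $|\psi_f(\rho_f)|=1$ on GRH together with the $\ell^2$ triangle inequality $\bigl(\sqrt{\mathcal{A}_f}-\sqrt{\mathcal{B}_f}\bigr)^2\le\sum|\cdots|^2\le\bigl(\sqrt{\mathcal{A}_f}+\sqrt{\mathcal{B}_f}\bigr)^2$. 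The two different constants $5/24$ and $29/24$ are what make the lower bound positive, and $(\sqrt{29/24}\pm\sqrt{5/24})^2/\pi=(17\pm\sqrt{145})/(12\pi)$ gives $A_f$ and $B_f$; the $O(1/\sqrt{\log\log T})$ error comes from the mean square of $\mathcal{E}$, which saves a factor $\log\log T$. If you want to salvage your approach you would need either an unconditional (or GRH-conditional) evaluation of $\sum_{\gamma_f\le T}L'(\rho_f,f)$, which is not currently known, or to switch to a cross-term-free decomposition of the above kind.
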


\noindent {\sc Remark.} The constant $c_f$ in the above theorem arises as follows:
$$ c_f = \lim_{x\to \infty} \frac{1}{x} \sum_{n\leq x} |\lambda_f(n)|^2.$$
The fact that $c_f>0$ exists, and is finite, essentially follows from the work of Rankin and Selberg (see Proposition \ref{dmva}, below). Equivalently, if we define the convolution $L$-function $$L(s,f\!\times \!\bar{f})=\sum_{n\ge 1} \frac{|\lambda_f(n)|^2}{ n^{s}}$$ for $\Re(s)>1$, then it can be shown that $L(s,f\!\times \!\bar{f})$ can be continued to all of $\mathbb{C}$ apart from a simple pole at $s=1$ with a residue of $c_f$. 

\smallskip

Observe that the lower bound in Theorem \ref{2nd_moment} implies that $L(s,f)$ possesses infinitely many simple zeros. In fact, the quantitative bound $N_{\!_f}^s(T) \gg_\varepsilon T^{1-\varepsilon}$ for any $\varepsilon>0$ follows from this lower bound and the generalized Lindel\"{o}f hypothesis which implies that $|L'(\rho_{\!_{f}},f)| \ll_\varepsilon |\gamma_{\!_f}|^\varepsilon$ for every non-trivial zero $\rho_{\!_{f}}$ of $L(s,f)$. In order to prove the sharper quantitative estimate for $N_{\!_f}^s(T)$ given in Theorem \ref{simple}, we require the more precise upper bounds for higher moments of $L'(\rho_{_f},f)$ given in the following theorem.

\begin{theorem}\label{2kth_moment}
Let  $f\in H_k(q,\chi)$ and assume the generalized Riemann hypothesis for $L(s,f)$. Then, for natural numbers $m$ and real numbers $\ell \ge \frac{1}{2}$ and $\varepsilon>0$, we have
\[
\sum_{0<\gamma_{\!_f}\leq T} \big| L^{(m)}(\rho_{\!_f},f) \big|^{2\ell} \ll T(\log T)^{\ell^2 +2 \ell m +1+\varepsilon}
\]
when $T$ is sufficiently large, where the implied constant depends on $f$, $m$, $\ell$, and $\varepsilon.$
\end{theorem}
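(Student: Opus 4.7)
The plan is to reduce the sum over zeros to a continuous $2\ell$-th moment of $L(s,f)$ on vertical lines near $\Re(s)=\tfrac{1}{2}$, paying one factor of $(\log T)^{2\ell m}$ to exchange the $m$-th derivative for $L$ itself and one further factor of $\log T$ to pass from a continuous mean to a sum over zeros. The key analytic input is the conditional moment bound
\[
\int_{0}^{T}|L(\sigma+it,f)|^{2\ell}\,dt \ll_{\varepsilon} T(\log T)^{\ell^{2}+\varepsilon}
\qquad\text{for }|\sigma-\tfrac{1}{2}|\ll 1/\log T,
\]
valid under GRH for every real $\ell\ge \tfrac{1}{2}$; this is the degree-two analogue of the Soundararajan--Chandee--Harper conditional upper bounds for moments of $\zeta(s)$.

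The first step is a zero-density argument built on subharmonicity. Since $|L^{(m)}(s,f)|^{2\ell}$ is subharmonic on $\mathbb{C}$, the mean-value inequality applied on discs of radius $r=1/\log T$ around each zero $\rho_{\!_f}=\tfrac{1}{2}+i\gamma_{\!_f}$ yields
\[
|L^{(m)}(\rho_{\!_f},f)|^{2\ell} \le \frac{1}{\pi r^{2}}\iint_{|s-\rho_{\!_f}|\le r}|L^{(m)}(s,f)|^{2\ell}\,dA(s).
\]
Summing over $0<\gamma_{\!_f}\le T$ and invoking the Riemann--von Mangoldt formula, each $s$ lies in $O(r\log T+1)=O(1)$ of these discs, so that
\[
\sum_{0<\gamma_{\!_f}\le T}|L^{(m)}(\rho_{\!_f},f)|^{2\ell} \ll (\log T)^{2}\!\int_{|\sigma-1/2|\le r}\!\int_{-1}^{T+1}|L^{(m)}(\sigma+it,f)|^{2\ell}\,dt\,d\sigma.
\]

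Next, Cauchy's integral formula on $|w|=r$ gives $|L^{(m)}(s,f)|\le m!\,r^{-m}\max_{|w|=r}|L(s+w,f)|$, and a further application of subharmonicity (now to $|L(\cdot,f)|^{2\ell}$ on discs of radius $r$ centred on the maximising $w$) converts this maximum into an area average of $|L|^{2\ell}$ over a disc of radius $2r$. Applying Fubini's theorem to reduce these area integrals to line integrals and collecting the powers of $r=1/\log T$ collapses the chain to
\[
\sum_{0<\gamma_{\!_f}\le T}|L^{(m)}(\rho_{\!_f},f)|^{2\ell} \ll (\log T)^{2\ell m+1}\max_{|\tau-1/2|\ll 1/\log T}\int_{0}^{T+1}|L(\tau+it,f)|^{2\ell}\,dt.
\]
Inserting the continuous moment bound above (with the functional equation of $L(s,f)$ handling $\tau<\tfrac{1}{2}$) delivers $T(\log T)^{\ell^{2}+2\ell m+1+\varepsilon}$, as required.

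The principal technical obstacle is the continuous moment bound itself. Adapting Soundararajan's method to a degree-two $L$-function rests on a conditional pointwise estimate of the form
\[
\log|L(\tfrac{1}{2}+it,f)| \le \Re\sum_{p\le x}\frac{\lambda_{f}(p)}{p^{1/2+it}}\frac{\log(x/p)}{\log x} + \frac{c\log T}{\log x} + O(1),
\]
together with sharp moment estimates for the resulting short Dirichlet polynomial. The exponent $\ell^{2}$ emerges from the Rankin--Selberg mean value $\sum_{p\le x}|\lambda_{f}(p)|^{2}/p = \log\log x + O(1)$, while extending the bound to all real $\ell\ge\tfrac{1}{2}$ with only a $(\log T)^{\varepsilon}$ loss requires a Harper-type multi-scale decomposition of the primes.
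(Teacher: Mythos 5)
There is a genuine gap in your reduction from the sum over zeros to a continuous moment. The step asserting that each point $s$ lies in $O(r\log T+1)=O(1)$ of the discs of radius $r=1/\log T$ centred at the zeros is not justified: the Riemann--von Mangoldt formula only controls the \emph{average} density of zeros, and the number of zeros $\rho_{\!_f}$ with $|\gamma_{\!_f}-t|\le 1/\log T$ is governed by $N_{\!_f}(t+r)-N_{\!_f}(t-r)=\vartheta_{\!_f}(t+r)-\vartheta_{\!_f}(t-r)+S_{\!_f}(t+r)-S_{\!_f}(t-r)$; the smooth part is indeed $O(1)$, but the best bound known for the fluctuation term, even assuming $\text{RH}_f$, is $S_{\!_f}(t)=O(\log t/\log\log t)$ (Lemma \ref{NfTSfT}), so zeros may cluster and the covering multiplicity can only be bounded by $O(\log T/\log\log T)$. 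With that correction your argument yields the exponent $\ell^2+2\ell m+2+\varepsilon$ (up to a $\log\log$ saving), i.e.\ you lose essentially a full power of $\log T$ relative to the stated theorem; this loss is not cosmetic, since feeding it into the H\"older argument for Theorem \ref{simple} would only give $N_{\!_f}^{s}(T)\gg T(\log T)^{-2-\varepsilon}$ rather than $T(\log T)^{-\varepsilon}$. A secondary issue: the continuous bound $\int_0^T|L(\sigma+it,f)|^{2\ell}\,dt\ll T(\log T)^{\ell^2+\varepsilon}$ for a degree-two $L$-function under $\text{RH}_f$ alone is not simply a quotation of Soundararajan's theorem; the prime-square terms $\lambda_f(p^2)-\chi(p)$ in the analogue of his main inequality (see the remark after Lemma \ref{1st_inequality}) cannot be bounded pointwise without also assuming GRH for $L(s,\chi)$ and $L(s,\mathrm{sym}^2 f)$, so one must bound the frequency of their large values by a mean-value argument, a point your sketch passes over.

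For comparison, the paper avoids any discrete-to-continuous conversion of $|L|^{2\ell}$: it proves the zero-averaged moment bound directly. Theorem \ref{shifted} bounds $\sum_{0<\gamma_{\!_f}\le T}|L(\rho_{\!_f}+w,f)|^{2\ell}$ uniformly for $|w|\le 1$, $|\Re(w)|\le(\log T)^{-1}$, by running Soundararajan's large-deviation method over the zeros themselves; the required mean values of Dirichlet polynomials at the points $\rho_{\!_f}$ (Proposition \ref{dmv:primes}) are obtained from the Landau--Gonek explicit formula (Lemma \ref{gonek}), and the prime-square polynomial is handled by bounding how often it is large on the zeros rather than pointwise. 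The derivative is then introduced only at the last step by Cauchy's integral formula on a circle of radius $R=(\log T)^{-1}$ (Lemma \ref{Cauchy_integral_formula}), which costs exactly $(\log T)^{2\ell m}$ with no further loss because Theorem \ref{shifted} is uniform in the shift $w$. If you want to salvage your route, you would need either a provable $O(1)$ bound on the number of zeros in intervals of length $1/\log T$ (not available, even under GRH) or to work with the discrete average from the start, as the paper does.
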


The above theorem is proved using techniques developed by of Soundararajan \cite{S}, as modified by the first author \cite{M}. We actually use Soundararajan's techniques to prove the following theorem, and then deduce Theorem \ref{2kth_moment} via an application of Cauchy's integral formula (see \textsection 8).

\begin{theorem}\label{shifted}
Let  $f\in H_k(q,\chi)$ and assume the generalized Riemann hypothesis for $L(s,f)$. Let $w\in\mathbb{C}$ with $|w|\le 1$ and $|\Re(w)| \le (\log T)^{-1}$. Then, for every positive real number $\ell$ and arbitrary $\varepsilon>0$, the inequality
\[
\sum_{0<\gamma_{\!_f}\leq T} \big| L(\rho_{\!_f} \!+\! w,f) \big|^{2\ell} \ll T(\log T)^{\ell^2+1+\varepsilon}
\]
holds uniformly in $w$ for sufficiently large $T$ where the implied constant depends on $f$, $\ell$, and $\varepsilon.$
\end{theorem}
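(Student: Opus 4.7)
The plan is to follow the method introduced by Soundararajan in \cite{S} for upper bounds on moments of $|\zeta(\tfrac{1}{2}+it)|$, as adapted to discrete sums over zeros by the first author in \cite{M}. The pointwise input is a conditional upper bound: assuming GRH for $L(s,f)$, for $2 \le X \le T$ and $s = \tfrac{1}{2}+\sigma+it$ with $0 \le \sigma \le (\log T)^{-1}$ and $|t| \le 2T$, I expect to establish
\begin{equation*}
  \log|L(s,f)| \ \le \ \Re \sum_{n \le X} \frac{\Lambda_f(n)}{n^{s + 1/\log X}\log n}\,\frac{\log(X/n)}{\log X} + \frac{\log T}{\log X} + O(1),
\end{equation*}
where $\Lambda_f(n)$ are the Dirichlet coefficients of $-L'(s,f)/L(s,f)$. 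This follows from the Hadamard factorization for $L(s,f)$ and the non-negativity of $\Re \sum_\rho 1/(s-\rho)$ for $\Re(s) \ge \tfrac{1}{2}$ on GRH, exactly as in Soundararajan's main proposition. The case $\Re(w) \in [-1/\log T, 0)$ in the hypothesis is handled either by the functional equation, which replaces $s = \rho_f+w$ by $1-s$ (whose real part then exceeds $\tfrac{1}{2}$), or by absorbing the small shift into the $1/\log X$ term in the exponent. Applying the inequality at $s = \rho_f + w$ yields
\begin{equation*}
  \log|L(\rho_f+w,f)| \ \le \ \Re\, \mathcal{P}(\gamma_f + \Im(w); X) + \frac{\log T}{\log X} + O(1),
\end{equation*}
where $\mathcal{P}(\tau;X)$ is a short Dirichlet polynomial of length $X$ depending mildly on $w$.

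Next, I would convert the $2\ell$-th moment into an integral against the distribution function $N(V) = \#\{0<\gamma_f \le T : \log|L(\rho_f+w,f)| > V\}$, using $\sum_{0<\gamma_f\le T} |L(\rho_f+w,f)|^{2\ell} = 2\ell \int_{-\infty}^{\infty} e^{2\ell V} N(V)\,dV$. The pointwise bound gives, by Markov with $2k$-th power,
\begin{equation*}
  N(V) \ \le \ \Big(V - \tfrac{\log T}{\log X} - C\Big)^{-2k}\sum_{0<\gamma_f\le T} \big|\mathcal{P}(\gamma_f+\Im(w); X)\big|^{2k}
\end{equation*}
for any positive integer $k$ and $V$ beyond a threshold of size $\log T/\log X + O(1)$, and trivially $N(V) \ll T\log T$ otherwise.

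The principal technical step, and the main obstacle, is the moment bound
\begin{equation*}
  \sum_{0<\gamma_f\le T} \big|\mathcal{P}(\gamma_f + \Im(w); X)\big|^{2k} \ \ll \ T(\log T)\cdot (Ck\log\log X)^{k}
\end{equation*}
uniformly for $k$ in the range $X^{2k} \le T^{1-\delta}$. After expanding the $2k$-th power, this reduces to estimates of the form $\sum_{\gamma_f \le T} m^{-i\gamma_f}$ for integers $m$ up to $X^{2k}$, which I would evaluate by a Landau/Gonek-type explicit formula for $L(s,f)$: the diagonal contribution is supported on tuples pairing conjugate prime factors and is of size $T$ times an inner sum that can be controlled by the Rankin--Selberg estimate $\sum_{n \le y}|\lambda_f(n)|^2 \sim c_f y$ (Proposition \ref{dmva}), while each off-diagonal term contributes $O(T^{1/2})$ and is absorbed when $X^{2k}$ is suitably small. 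The $t$-integral analogue of this step is immediate from orthogonality; replacing the integration with a sum over zeros and still gaining the same $(k\log\log X)^k$ saving is what makes this the crux of the argument.

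Once the moment bound is established, the endgame is standard. Choosing $X = T^{1/(2k)}$ and allowing $k \asymp \log\log T$, Markov yields a Gaussian-type tail $N(V) \ll T \log T \cdot \exp\bigl(-V^2/(C\ell\log\log T)\bigr)$ in the range where $V \asymp \ell \log\log T$, and $N(V) = 0$ for $V$ much larger than $\log T$ by the subconvexity-type bound implicit in the pointwise estimate above. Substituting into the distribution integral and completing the square produces $T(\log T)^{\ell^2 + 1 + o(1)}$ as $T \to \infty$, and this is bounded by $T(\log T)^{\ell^2+1+\varepsilon}$ for every $\varepsilon > 0$ once $T$ is sufficiently large, uniformly for $w$ in the prescribed range.
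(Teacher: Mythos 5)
Your overall architecture is the same as the paper's: a Soundararajan-type pointwise inequality for $\log|L(\rho_{\!_f}+w,f)|$ in terms of a short Dirichlet polynomial over prime powers (the paper's Lemma \ref{1st_inequality}), a large-value count $\mathcal{N}(V;T,w)$ estimated by Markov using high discrete moments of that polynomial over the zeros (the paper's Proposition \ref{dmv:primes}, proved via the Landau--Gonek formula, Lemma \ref{gonek}), handling $\Re(w)<0$ by the functional equation, and finally integrating $e^{2\ell V}$ against the tail. Your implicit treatment of the prime-square terms -- putting them into the moment computation over zeros rather than bounding them pointwise -- is in fact the paper's key new point, since a pointwise bound would require GRH for $L(s,\chi)$ and $L(s,\mathrm{sym}^2 f)$.

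However, there is a genuine quantitative gap in the middle of your argument, and it is exactly where the exponent $\ell^2+1+\varepsilon$ is won or lost. To reach $T(\log T)^{\ell^2+1+\varepsilon}$ you need the Gaussian tail with constant $1+o(1)$, i.e. $\mathcal{N}(V)\ll T\log T\exp\bigl(-(1+o(1))V^2/\log\log T\bigr)$ in the range $V\asymp \ell\log\log T$; your stated tail $\exp\bigl(-V^2/(C\ell\log\log T)\bigr)$ with an unspecified constant only yields $(\log T)^{C\ell^2+1}$ (or worse) after completing the square. Moreover, your single-scale choice cannot produce the sharp constant: to get decay $e^{-k}\approx\exp(-V^2/\log\log T)$ from Markov you must take $k\approx V^2/\log\log T\approx \ell^2\log\log T$, but with one polynomial of length $X=T^{1/(2k)}$ the threshold term $\log T/\log X=2k$ in your pointwise inequality is then comparable to or larger than $V$ itself, so the quantity $V-\log T/\log X-C$ in your Markov step is not even positive. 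The paper avoids this with Soundararajan's two-scale device: the pointwise inequality is applied with a long polynomial of length $x=T^{A/V}$ (so the entropy term is $V/A=o(V)$), the sum is split as $\mathcal{S}_1$ (primes $p\le z=x^{1/\log\log T}$), $\mathcal{S}_2$ (primes $z<p\le x$) and $\mathcal{S}_3$ (prime squares), with thresholds $V(1-\tfrac{9}{5A})$, $\tfrac{V}{10A}$, $\tfrac{V}{10A}$, and only the short piece $\mathcal{S}_1$ is raised to the $k\approx V_1^2/\log\log T$ power, with diagonal $k!(\log\log T+O(1))^k$ giving the constant $1-o(1)$; $\mathcal{S}_2$ and $\mathcal{S}_3$ are handled with much smaller moments (and $\mathcal{S}_3$ via the second estimate \eqref{2nd_prime_sum}). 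A secondary inaccuracy: in the discrete moment bound the Landau--Gonek terms with $n/d$ a prime power contribute a main term of size $\asymp T\Lambda(n/d)$, not $O(T^{1/2})$ per term, so the off-diagonal analysis needs the length restriction (the paper uses $x^m\le T^{2/3}$) and a more careful treatment than you indicate. Without the splitting and these sharp constants your method proves a bound of the shape $T(\log T)^{C\ell^2+1}$, which is strictly weaker than the theorem.
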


\smallskip

There are two aspects which make the proofs of Theorem \ref{2kth_moment} and Theorem \ref{shifted} different than the proofs of the analogous theorems in \cite{M}. The first difference is that Theorem \ref{2kth_moment} holds for all real numbers $\ell \ge \frac{1}{2}$ where as the corresponding result\footnote{Although, using the techniques in the present paper, the proof of Theorem 1.1 in \cite{M} can be modified to hold for all $\ell \ge \frac{1}{2}$.} in \cite{M} was proved only for $\ell \in \mathbb{N}$. The second difference is in how we estimate the frequency of large values of a certain sum of the Fourier coefficients $\lambda_f(n)$ supported on the squares of the primes when averaged over the zeros of $L(s,f)$. If, in addition to the generalized Riemann hypothesis for $L(s,f)$, we were willing to assume the generalized Riemann hypothesis for the Dirichlet $L$-function, $L(s,\chi)$, and for the symmetric square $L$-function of $f$, $L(s,\text{sym}^2 f)$, then the proof in \cite{M} would carry over in a fairly straightforward manner. Analogous assumptions were made in the work of Soundararajan and Young \cite{SY}.  In the present case, we could deduce from the generalized Riemann hypotheses for $L(s,\chi)$ and  $L(s,\text{sym}^2 f)$ that the estimate
\[
D(s)=\sum_{p \leq z} \frac{(\lambda_f(p^2)\!-\!\chi(p))}{p^{2s}} \frac{\log(z/p)}{\log z} \ll_f \log\log\log T
\]
holds for $\Re(s) \ge \frac{1}{2}$, $2\le z \le \sqrt{T}$, and $T\le \Im(s) \le 2T$ when $T$ is sufficiently large. A sum of this form arises from the prime square contribution to an inequality for $\log|L(s,f)|$ that we prove in \textsection 3.  In order to circumvent these assumptions on $L(s,\chi)$ and $L(s,\text{sym}^2 f)$, instead of bounding the Dirichlet polynomial $D(s)$ point-wise, we prove an upper estimate on the number of zeros $\rho_{_f}$ of $L(s,f)$ with $T\le \Im(\rho_{_f}) \le 2T$ for which $|D(\rho_{_f})|$ is large. This argument is similar to the analysis Soundararajan's work but appears to be new as a point-wise bound for analogous sums supported on the squares of primes was used in \cite{M}, \cite{S}, and \cite{SY}.

\subsection{The proof of Theorem \ref{simple}}

We now indicate how Theorems \ref{2nd_moment} and \ref{2kth_moment} can be combined to prove Theorem \ref{simple}, and thus obtain a  quantitative lower bound for the number of simple zeros of $L(s,f)$ in the strip $0<\Im(s) \le T$.

\begin{proof}[Proof of Theorem \ref{simple}] Let $\varepsilon>0$ be fixed. Since a non-trivial zero $\rho_{\!_{f}}$ of $L(s,f)$ is simple if and only if $L'(\rho_{\!_f},f)\neq 0$, we observe that
\[
N_{\!_f}^{s}(T) \ = \!\!\! \sum_{\substack{0<\gamma_{\!_f}\leq T \\ L'(\rho_{\!_f},f)\neq 0}} \!\!\! 1.
\]
Therefore, H\"{o}lder's inequality implies that
\begin{equation}\label{cauchysimple}
\left\{ \sum_{0<\gamma_{\!_f}\leq T} \big| L'(\rho_{\!_f},f) \big|^{2}\right\}^{\ell} 
\leq   \sum_{0<\gamma_{\!_f}\leq T} \big| L'(\rho_{\!_f},f) \big|^{2\ell} \cdot \left\{ N_{\!_f}^{s}(T)\right\}^{\ell-1} 
 \end{equation}
 for any $\ell>1$. The lower bound in Theorem \ref{2nd_moment} implies that the left-hand side of this inequality is $\gg T^\ell (\log T)^{4\ell}$ while the inequality in Theorem \ref{2kth_moment} implies that the right-hand side of \eqref{cauchysimple} is 
 \[
 \ll  \left\{ N_{\!_f}^{s}(T)\right\}^{\ell-1} \cdot T  (\log T)^{(\ell+1)^2+\eta}
 \]
 for any $\eta>0$. Using these bounds, after a little rearranging, it follows that
 \[
 N_{\!_f}^{s}(T) \gg T (\log T)^{-(\ell-1)-\eta/(\ell-1)}
 \]
 for any $\ell>1$ and $\eta>0$. Choosing $\ell=1+\varepsilon/2$ and $\eta=\varepsilon^2/4$, we deduce the theorem.
\end{proof}

\smallskip

\noindent{\sc Remark.} Using Harper's \cite{Harper} recent and rather remarkable modification of Soundararajan's ideas, it may be possible to prove versions of Theorem \ref{2kth_moment} and Theorem \ref{shifted} with $\varepsilon=0.$ It does not appear, however, that this would lead to an improvement of Theorem \ref{simple} using the above proof.

\subsection{The proof of Theorem \ref{2nd_moment}} The proof of Theorem \ref{2nd_moment} combines ideas of Ramachandra \cite{Ram} and the second author \cite{Ng}, relying in a fundamental way on a classical result of Rankin \cite{Ran} concerning the average size of $|\lambda_f(n)|$.  Some of the technical aspects of the proof are quite involved, but the underlying ideas are relatively easy to explain. For this reason, we now give the proof the theorem to indicate these ideas. Throughout this article, we frequently encounter the arithmetic functions
\begin{equation}
 \label{eq:defncoeffs}
\alpha_f(n)=-\lambda_f(n)\log n \quad \text{ and } \quad \beta_{f,x}(n)=-\lambda_{f}(n)\log(x^2/n)
\end{equation}
and the quantity
\begin{equation}
  \label{eq:X}
  X = \frac{\sqrt{q}T}{2 \pi},
\end{equation}  
which is essentially the square root of the analytic conductor of $L(s,f)$ when $\Re(s) =\tfrac{1}{2}$ and $T<\Im(s) \leq 2T$. These arise naturally in the approximate functional equation of $L'(s,f)$ which has the shape 
\begin{equation}
  \label{eq:afe0}
  L'(s,f) = \sum_{n \le X} \frac{\alpha_f(n)}{n^s}+  \psi_{\!_f}(s) \sum_{n \le X} \frac{ \beta_{\bar{f},X}(n) }{n^{1-s}} + \mathcal{E}(s,f)
\end{equation} 
where $\lambda_{\bar{f}}(n) = \overline{\lambda_f(n)}$ denotes the coefficient of the Dirichlet series for $L(s,\bar{f}) := \overline{L(\overline{s},f)}$ when $\Re(s)>1$, and $\mathcal{E}(s,f)$ is an error term which, in mean-square, is smaller on average then the first two terms on the right-hand side of the above expression. (See Lemma \ref{approx} for a precise definition of $\mathcal{E}(s,f)$.) In \textsection 6, we prove the following estimates.

\begin{proposition} \label{mainterms}
Let  $f\in H_k(q,\chi)$ and assume the generalized Riemann hypothesis for $L(s,f)$. Then, for $T\geq 10$, we have
\begin{align} \label{eq:mainterm1}
 & \sum_{T<\gamma_{\!_f}\leq 2T} \Bigg|\sum_{n\leq X} \frac{\alpha_f(n)}{n^{\rho_{\!_f}}} \Bigg|^2 =  \frac{5 }{24\pi} c_f  T  \log^4 X + O\Big( T (\log T)^{4-2\delta} \Big)
 \end{align}
 and
 \begin{align} \label{eq:mainterm2}
 & \sum_{T<\gamma_{\!_f}\leq 2T} \Bigg|\sum_{n\leq X} \frac{\beta_{f,X}(n)}{n^{\rho_{\!_f}}} \Bigg|^2 = \frac{29 }{24\pi} c_f  T  \log^4 X + O\Big( T (\log T)^{4-2\delta} \Big) 
\end{align}
where $c_f$ is given by \eqref{eq:cfdefinition} and $\delta >\frac{1}{18}$ is the constant appearing in Proposition \ref{sums}. 
\end{proposition}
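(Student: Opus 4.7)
The plan is to open up the modulus squared and reduce each sum on the left of \eqref{eq:mainterm1}--\eqref{eq:mainterm2} to a double sum over pairs $(m,n)\leq X$ weighted by a sum over zeros of the form $\sum_{T<\gamma_{\!_f}\le 2T}(n/m)^{i\gamma_{\!_f}}$. Using GRH to write $\rho_{\!_f} = \tfrac{1}{2} + i\gamma_{\!_f}$, the first sum in \eqref{eq:mainterm1} becomes
\begin{equation*}
\sum_{T<\gamma_{\!_f}\le 2T}\Bigg|\sum_{n\le X}\frac{\alpha_f(n)}{n^{\rho_{\!_f}}}\Bigg|^2 \ = \ \sum_{m,n\le X}\frac{\alpha_f(m)\overline{\alpha_f(n)}}{\sqrt{mn}}\sum_{T<\gamma_{\!_f}\le 2T}\left(\frac{n}{m}\right)^{i\gamma_{\!_f}}.
\end{equation*}
The inner sum over zeros is precisely the object estimated by Proposition \ref{sums}, which on GRH should supply a Gonek-style explicit formula: a main term of order $T$ supported on prime-power ratios $n/m = p^k$ (with weight coming from $\Lambda_f(p^k) = \lambda_f(p^k)\log p$), plus an error of size $T(\log T)^{1-2\delta}$ for some $\delta > 1/18$.

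The diagonal $m=n$ is handled directly. The zero count gives $N_{\!_f}(T,2T) = \tfrac{T}{\pi}\log X + O(T)$, and Rankin's mean-square estimate $\sum_{n\le y}|\lambda_f(n)|^2 = c_f y + O(y^{1-\eta})$ combined with partial summation evaluates the inner sum $\sum_{n\le X}\lambda_f(n)^2(\log n)^2/n$. The off-diagonal $m\neq n$ splits into two parts: terms where $n/m$ or $m/n$ is a prime power, which through the main term of Proposition \ref{sums} contribute an additional main term of order $T \log^4 X$ (necessarily combining with the diagonal so as to produce exactly the constant $\tfrac{5}{24\pi}c_f$); and terms where $n/m$ is not a prime power, which contribute only through the explicit-formula error. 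Evaluating the prime-power off-diagonal requires another layer of partial summation, now in the prime variable, together with Rankin's asymptotic.

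The second estimate \eqref{eq:mainterm2} follows by the same strategy with $\beta_{f,X}(n) = -\lambda_f(n)\log(X^2/n)$ replacing $\alpha_f(n)$; only the partial-summation bookkeeping changes, and one arrives at the constant $\tfrac{29}{24\pi}c_f$. The main obstacle in both cases is controlling the total error term: summed against the Dirichlet coefficients and double-summed over all $m,n\le X$, the pointwise error $O(T(\log T)^{1-2\delta})$ from Proposition \ref{sums} must remain within the claimed $O(T(\log T)^{4-2\delta})$ bound. The natural tool here is Cauchy--Schwarz combined with the mean-value estimates $\sum_{n\le X}\lambda_f(n)^2 (\log n)^{2r}/n \ll (\log X)^{2r+1}$, which just suffice to turn the pointwise savings $(\log T)^{-2\delta}$ into the stated logarithmic savings in the error after the double summation. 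The quantitative matching of constants $5/24\pi$ and $29/24\pi$ requires a careful combinatorial accounting of the prime-power off-diagonal contributions.
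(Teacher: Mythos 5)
Your main-term skeleton (diagonal terms plus ratios $n/m$ equal to prime powers, evaluated with Rankin's asymptotic) is the right shape, but the route you propose---expanding the square and applying an explicit formula to each inner sum $\sum_{T<\gamma_{\!_f}\le 2T}(n/m)^{i\gamma_{\!_f}}$---breaks down at the error-control step, and this is exactly the obstruction the paper's proof is designed to avoid. First, a misattribution: Proposition \ref{sums} says nothing about sums over zeros; it collects averages of $|\lambda_f(n)|$ and $|\lambda_f(n)|^2$, and its constant $\delta$ comes from Rankin's bound $\sum_{n\le x}|\lambda_f(n)|\ll x(\log x)^{-\delta}$. The zero-sum tool is Lemma \ref{gonek}, whose error terms are $O\big(x\log(2xT)\log\log(3x)\big)+O\big(\log x\min(T,x/\langle x\rangle)\big)+O\big(\log(2T)\min(T,1/\log x)\big)$ with $x=n/m$; they are nowhere near the uniform $T(\log T)^{1-2\delta}$ you posit. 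Summing them absolutely against the weights $|\alpha_f(m)\alpha_f(n)|/\sqrt{mn}$ over $m\neq n\le X$ with $X\asymp T$ is hopeless: unbalanced pairs ($m$ small, $n\asymp X$) make the first error term of size $\asymp (n/m)\log T\log\log T$ per pair, and their aggregate is a positive power of $T$ beyond $T\log^4T$; near-diagonal pairs $n=m+r$ have $\log(n/m)\asymp r/T$, so the third error term is $\asymp (T/r)\log T$ per pair and their aggregate already exceeds $T(\log T)^{4-2\delta}$. Cauchy--Schwarz together with $\sum_{n\le X}|\lambda_f(n)|^2(\log n)^{2r}/n\ll(\log X)^{2r+1}$ cannot repair this, because the issue is not the size of the coefficients but the absence of exploited cancellation over $m,n$ in the explicit-formula errors; extracting that cancellation (in particular for ratios $n/m$ near $1$ or near integers) is precisely where shifted convolution sums $\sum_{n\le x}\lambda_f(n)\overline{\lambda_f(n+r)}$ enter, which the authors state explicitly (Remark 2 following the proof of Theorem \ref{2nd_moment}) they do not use and do not know how to avoid within this route.

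The paper never opens the square zero by zero. It writes $\sum_{\tau_1<\gamma_{\!_f}\le\tau_2}|A(\rho_{\!_f})|^2=\frac{1}{2\pi i}\int_{\mathscr{C}}\frac{L'}{L}(s,f)A(s)\overline{A}(1-s)\,ds$ for a contour $\mathscr{C}$ straddling the critical strip, evaluates the vertical edges with Montgomery--Vaughan (Lemma \ref{MontgomeryVaughan}) and the functional equation, and bounds the horizontal edges with Lemma \ref{LprimeLapprox}; this yields Proposition \ref{dmva}, whose main term is $\frac{T}{\pi}\big(\log X\sum_{n\le Y}|a(n)|^2/n-\Re\sum_{n\le Y}(\Lambda_f*a)(n)\overline{a(n)}/n\big)$. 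Feeding in Lemma \ref{rankinselbergpartialsum} and Lemma \ref{lambda_prime} gives the constants $\tfrac13-\tfrac18=\tfrac{5}{24}$ and $\tfrac73-\tfrac98=\tfrac{29}{24}$ that you left as ``combinatorial accounting,'' with the exponent $4-2\delta$ coming from Rankin's bound and Lemma \ref{pole}. To salvage your approach you would have to supply the shifted-convolution input (or some substitute source of off-diagonal cancellation); as written, the error analysis does not close.
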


\begin{proposition}
  \label{errordmv}
Let  $f\in H_k(q,\chi)$ and assume the generalized Riemann hypothesis for $L(s,f)$.  Then, for $T\geq 10$, we have
$$\sum_{T<\gamma_{\!_f}\leq 2T} \big|\mathcal{E}\big(\rho_{\!_f}, f) \big|^2 = O\left( \frac{T \log^4 T}{\log\log T}\right) $$
where $\mathcal{E}(s,f)$ is the function appearing in Lemma \ref{approx}.
\end{proposition}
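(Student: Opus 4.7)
The plan is to first establish a mean-square upper bound for $\mathcal{E}(\tfrac{1}{2}+it,f)$ over a short interval of the critical line near height $T$, and then transfer this integral estimate to the discrete sum over zeros via a standard Gallagher-type discrete mean-value inequality. Under GRH the non-trivial zeros $\rho_f$ lie on $\Re(s)=\tfrac{1}{2}$ with local density $O(\log T)$ at height $T$, so a mean-square bound of size $T(\log T)^3/\log\log T$ on the critical line will, after paying a factor of $\log T$ in the transfer step, give exactly the claimed $T(\log T)^4/\log\log T$.

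For the integral estimate, I would begin with the explicit description of $\mathcal{E}(s,f)$ provided by Lemma \ref{approx}. Since the approximate functional equation \eqref{eq:afe0} uses a sharp cut-off at $X=\sqrt{q}\,T/(2\pi)$, $\mathcal{E}(s,f)$ is expected to admit a Perron/Mellin-type contour-integral representation of the form
\[
\mathcal{E}(s,f) \;=\; \frac{1}{2\pi i}\int_{(c)} L'(s+w,f)\,X^w\,K(w)\,\frac{dw}{w} \;-\; \psi_f(s)\cdot(\text{dual term}),
\]
where $K(w)$ is a holomorphic weight of rapid vertical decay that interpolates between the truncated Dirichlet sum and the full series. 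Setting $s=\tfrac{1}{2}+it$, shifting the contour slightly to $\Re(w)\asymp 1/\log\log T$ (either to the left or right of the origin, with any residue at $w=0$ cancelling against the analogous residue in the dual term), applying Stirling's formula to control $\psi_f(\tfrac{1}{2}+it)$, and invoking the GRH-conditional second-moment estimate $\int_T^{2T}|L'(\tfrac{1}{2}+it,f)|^2\,dt \ll T(\log T)^3$, a careful Cauchy--Schwarz argument on the $w$-integral should yield
\[
\int_T^{2T}\big|\mathcal{E}(\tfrac{1}{2}+it,f)\big|^2\,dt \;\ll\; \frac{T(\log T)^3}{\log\log T}.
\]

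For the transfer step I would apply a standard Gallagher/Montgomery--Vaughan-type discrete mean-value lemma (of the sort used in \cite{Ng}): for any function $F$ analytic in a thin horizontal strip about the critical line with controlled growth and any sequence $\{\gamma\}$ of local density $O(\log T)$,
\[
\sum_{T<\gamma_f\leq 2T}\big|F(\tfrac{1}{2}+i\gamma_f)\big|^2 \;\ll\; \log T\int_{T-1}^{2T+1}\big|F(\tfrac{1}{2}+it)\big|^2\,dt \;+\; \text{negligible boundary terms}.
\]
Applied to $F=\mathcal{E}(\cdot,f)$ with the previous mean-square estimate, this immediately yields the proposition.

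The main obstacle will be extracting the $(\log\log T)^{-1}$ saving in the mean-square bound. A direct application of Cauchy--Schwarz to the contour integral, without any care about the shift parameter, only produces $O(T(\log T)^3)$, which after the transfer step gives $O(T(\log T)^4)$---precisely the order of magnitude of the main terms in Proposition \ref{mainterms} and therefore useless for the asymptotic in Theorem \ref{2nd_moment}. Recovering the extra $(\log\log T)^{-1}$ factor requires delicate book-keeping among the contour shift $\Re(w)\asymp 1/\log\log T$, the archimedean factor $\psi_f(\tfrac{1}{2}+it)$ via Stirling, and the pointwise decay of $K(w)$; this is the technically most involved part of the argument.
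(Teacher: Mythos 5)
Your plan has a genuine gap, and it sits precisely at the step you treat as routine. The transfer inequality you invoke,
\[
\sum_{T<\gamma_{\!_f}\leq 2T}\big|F(\tfrac{1}{2}+i\gamma_{\!_f})\big|^2 \;\ll\; \log T\int_{T-1}^{2T+1}\big|F(\tfrac{1}{2}+it)\big|^2\,dt \;+\;\text{negligible boundary terms},
\]
is not a standard lemma and is not valid as stated: the ordinates $\gamma_{\!_f}$ are not well spaced, and a discrete sum over them cannot be majorized by $\log T$ times the line integral of $|F|^2$ alone. The correct way to pass from the line to the zeros (this is what the paper's Proposition \ref{dmvbd} does) is to write the sum as a Stieltjes integral against $dN_{\!_f}(t)=\vartheta_{\!_f}'(t)\,dt+dS_{\!_f}(t)$; the smooth part indeed costs a factor $\log T$, but the $dS_{\!_f}$ part, after integration by parts, contributes a term of the shape $\sup_t|S_{\!_f}(t)|\cdot\big(\sup|F|^2+\int|F\,\overline{F}'|\big)$, which for the Dirichlet-polynomial pieces of $\mathcal{E}$ is the \emph{dominant} term, not a negligible boundary term. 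Under $\text{RH}_f$ one has $S_{\!_f}(t)=O(\log t/\log\log t)$ (Lemma \ref{NfTSfT}), and this Selberg-type bound is the sole source of the $(\log\log T)^{-1}$ saving in the proposition. Your argument never invokes it, so even if every other step were carried out, you could not do better than $O(T\log^4 T)$.

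Relatedly, you are hunting for the $(\log\log T)^{-1}$ in the wrong quantity. The continuous mean square $\int_T^{2T}|\mathcal{E}(\tfrac12+it,f)|^2\,dt$ is in fact $O(T\log^2T)$ — by Montgomery--Vaughan applied to the explicit five-term decomposition of $\mathcal{E}$ in Lemma \ref{approx} (the sharp-versus-smooth cutoff differences $\mathcal{E}_1,\mathcal{E}_2$, the plain Dirichlet polynomial in $\mathcal{E}_3$, and the shifted contour integrals $\mathcal{E}_4,\mathcal{E}_5$) — so no delicate contour shift, Stirling bookkeeping, or GRH second moment of $L'$ is needed there, and none of those devices would manufacture a $1/\log\log T$ saving on the critical line in any case. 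The paper's route is: bound each $\mathcal{E}_j$ separately via the discrete mean-value bound of Proposition \ref{dmvbd} (whose proof is exactly the $dN_{\!_f}=\vartheta_{\!_f}'\,dt+dS_{\!_f}$ splitting described above, combined with Cauchy--Schwarz and Montgomery--Vaughan for $\int|B|^2$ and $\int|B'|^2$), with the $\mathcal{E}_4,\mathcal{E}_5$ cases first reduced to Dirichlet polynomials $\mathcal{C}_{j,v},\mathcal{D}_{j,v}$ after truncating the $w$-integrals using the exponential decay of $\Gamma(w)$. If you repair your transfer step by proving and using an inequality of that type (so that the GRH bound on $S_{\!_f}$ enters), the rest of your outline collapses to essentially the paper's argument.
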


\smallskip

We now deduce Theorem \ref{2nd_moment} from Propositions \ref{mainterms} and \ref{errordmv}.

\begin{proof}[Proof of Theorem \ref{2nd_moment}] We define
\begin{equation}
 \label{eq:AfBf}
 \mathcal{A}_f(T):= \sum_{T<\gamma_{\!_f}\leq 2T} \Bigg| \sum_{n\leq X} \frac{\alpha_f(n)}{n^{\rho_{_f}}}  \Bigg|^{2}, \quad \mathcal{B}_f(T):= \sum_{T<\gamma_{\!_f}\leq 2T} \Bigg|  \sum_{n\leq X} \frac{\beta_{\bar{f},X}(n)}{n^{1-\rho_{_f}}} \Bigg|^{2},
\end{equation}
and
\begin{equation}
  \label{eq:Ef}
\mathcal{E}_f(T):= \sum_{T<\gamma_{\!_f}\leq 2T} \big|\mathcal{E}\big(\rho_{\!_f}, f) \big|^2. 
\end{equation}
The generalized Riemann hypothesis for $L(s,f)$ implies that $1-\rho_{\!_f}=\overline{\rho_{\!_f}}$ for any non-trivial zero $\rho_{\!_f}$ of $L(s,f)$, so it follows from this assumption that
$$  \mathcal{B}_f(T)= \sum_{T<\gamma_{\!_f}\leq 2T} \Bigg|  \sum_{n\leq X} \frac{\beta_{\bar{f},X}(n)}{n^{1-\rho_{_f}}} \Bigg|^{2} = \sum_{T<\gamma_{\!_f}\leq 2T} \Bigg|  \sum_{n\leq X} \frac{\beta_{f,X}(n)}{n^{\rho_{_f}}} \Bigg|^{2}.$$
Moreover, since $|\psi_{\!_f}(\tfrac{1}{2}+it)|=1$ for real $t$,  the generalized Riemann hypothesis for $L(s,f)$ also implies that $|\psi_{\!_f}(\rho_{\!_f}) |=1$ for all non-trivial zeros $\rho_{\!_f}$ of $L(s,f)$. Therefore, using \eqref{eq:afe0}, expanding, and then applying the Cauchy-Schwarz inequality, it follows from Propostion \ref{mainterms} and Proposition \ref{errordmv} that
\begin{equation}
\label{eq:main3}
\begin{split}
 \sum_{T<\gamma_{\!_f}\leq 2T} \big| L'(\rho_{\!_{f}},f) \big|^{2} &= \sum_{T<\gamma_{\!_f}\leq 2T} \Bigg| \sum_{n \le X} \frac{\alpha_f(n)}{n^{\rho_{_f}}}+  \psi_{\!_f}(\rho_{_f}) \sum_{n \le X} \frac{ \beta_{\bar{f},X}(n) }{n^{1-\rho_{_f}}} + \mathcal{E}\big(\rho_{\!_f}, f) \Bigg|^{2} 
 \\
 &= \sum_{T<\gamma_{\!_f}\leq 2T} \Bigg| \sum_{n \le X} \frac{\alpha_f(n)}{n^{\rho_{_f}}}+  \psi_{\!_f}(\rho_{_f}) \sum_{n \le X} \frac{ \beta_{\bar{f},X}(n) }{n^{1-\rho_{_f}}} \Bigg|^{2} 
 \\
 & \quad \quad \quad  + O\Big( \sqrt{\mathcal{A}_f(T)\mathcal{E}_f(T)} \Big) + O\Big( \sqrt{\mathcal{B}_f(T)\mathcal{E}_f(T)} \Big)+ O\Big( \mathcal{E}_f(T) \Big)
 \\
 &= \sum_{T<\gamma_{\!_f}\leq 2T} \Bigg| \sum_{n \le X} \frac{\alpha_f(n)}{n^{\rho_{_f}}}+  \psi_{\!_f}(\rho_{_f}) \sum_{n \le X} \frac{ \beta_{\bar{f},X}(n) }{n^{1-\rho_{_f}}} \Bigg|^{2} + O\!\left( \frac{T \log^4 T}{\sqrt{\log\log T}}\right).
\end{split}
\end{equation}
Here we have used the estimate $X\asymp T$. Thus, in order to prove Theorem \ref{2nd_moment}, it suffices to provide upper and lower bounds for the 
final sum in the above equation. 

We bound the sum on the right-hand side with the inequalities 
\begin{equation}
  \label{inequalities}
\left( \sqrt{ \sum_{n\le N} |u_n|^2 }- \sqrt{ \sum_{n\le N}  |v_n|^2 } \right)^{\!2} \ \le \ \sum_{n\le N} | u_n\!+\! v_n |^2 \ \le \ 
\left( \sqrt{ \sum_{n\le N} |u_n|^2 } + \sqrt{ \sum_{n\le N}  |v_n|^2 } \right)^{\!2},
\end{equation}
valid for complex numbers $\{u_n\}_{n=1}^{N}$ and $\{v_n\}_{n=1}^{N}$.  This is simply the triangle inequality in the form
\begin{equation}
  \label{triangleineq}
\Big| || {\bf x} ||_2 - || {\bf y} ||_2 \Big|  \le || {\bf x} \!+\! {\bf y} ||_2 \le || {\bf x} ||_2 + || {\bf y} ||_2
\end{equation}
where ${\bf x, y} \in \mathbb{C}^N$ and $|| \cdot ||_2$ denotes the $\ell^2$ norm. 
A key point in the argument is that if $|| {\bf x} ||_2 \ne || {\bf y} ||_2$, then \eqref{triangleineq} provides
a positive lower bound. 

Again appealing to the fact that $|\psi_{\!_f}(\rho_{\!_f}) |=1$ for all non-trivial zeros $\rho_{\!_f}$ of $L(s,f)$ and applying \eqref{inequalities} to the final sum on the right-hand side of \eqref{eq:main3}, it follows that
\begin{equation}
\begin{split} 
\bigg( \sqrt{\mathcal{A}_f(T)} &- \sqrt{\mathcal{B}_f(T)} \bigg)^2
\\
& \leq \sum_{T<\gamma_{\!_f}\leq 2T} \Bigg| \sum_{n \le X} \frac{\alpha_f(n)}{n^{\rho_{_f}}}+  \psi_{\!_f}(\rho_{_f}) \sum_{n \le X} \frac{ \beta_{\bar{f},X}(n) }{n^{1-\rho_{_f}}} \Bigg|^{2} 
\\
& \quad \quad \quad  \quad \leq \left( \sqrt{\mathcal{A}_f(T)} + \sqrt{\mathcal{B}_f(T)} \right)^2
\end{split}
\end{equation}
where $\mathcal{A}_f(T)$ and $\mathcal{B}_f(T)$ are given by \eqref{eq:AfBf}. 
From this observation, Proposition \ref{mainterms}, and \eqref{eq:main3}, we conclude that 
\begin{equation} \label{almostdone}
 (A_{\!_f}+o(1) ) \ \! T \log^4 X \leq \sum_{T<\gamma_{\!_f}\leq 2T} \big| L'(\rho_{\!_{f}},f) \big|^{2} \leq (B_{\!_f} +o(1)) \ \! T \log^4 X  
 \end{equation}
where 
$$  A_{\!_f} = \left(\sqrt{\frac{29}{24} } - \sqrt{\frac{5}{24} } \right)^2 \frac{c_f}{\pi} = \left(\frac{17-\sqrt{145}}{12\pi}\right)c_f  ,$$
$$ B_{\!_f} = \left(\sqrt{\frac{29}{24} } + \sqrt{\frac{5}{24} } \right)^2 \frac{c_f}{\pi} = \left(\frac{17+\sqrt{145}}{12\pi}\right)c_f,$$
and the $o(1)$ terms are $O\big(1/\sqrt{\log\log T} \ \! \big)$. Since $X\asymp T$, the theorem now follows by summing the estimates in \eqref{almostdone} over the dyadic intervals $(T/2,T], (T/4,T/2], (T/8, T/4], \ldots$.
\end{proof}

\smallskip

\noindent {\sc Remarks.} \\

\noindent{\it 1.} 
As the proof above indicates, it is the fact that the sums $\mathcal{A}_f(T)$ and $\mathcal{B}_f(T)$ in Proposition \ref{mainterms} have asymptotically different sizes which allow us to deduce this asymptotically positive lower bound for the average of $|L'(\rho_{_f},f)|^2$, the crucial estimate in to the proof of Theorem \ref{simple}.  It is tempting to try to use the method of Rudnick and Soundararajan \cite{RS2} to prove a lower bound of this order of magnitude.  However, this would require the asymptotic evaluation of the sum
$$\sum_{0 < \gamma_{_f} \le T} L'(\rho_{_f},f) \overline{A(\rho_{_f})}$$ where $A(s)$ is a short Dirichlet polynomial and, even in the case $A\equiv1$, this is a difficult open problem.  

\smallskip

\noindent{\it 2.} A main ingredient in the proof of Theorem \ref{2nd_moment} is the asymptotic evaluation of the sums in Proposition \ref{mainterms}.   One novelty of our proof is that we make no use of the Landau-Gonek type explicit formula for the sum
$\sum_{0 < \gamma_f \le T} x^{\rho_f}$ 
or of any version of the Guinand-Weil explicit formula while proving Proposition \ref{mainterms}.    Instead, we use standard Dirichlet polynomial techniques  to estimate these sums, namely the residue theorem and Montgomery and Vaughan's mean-value theorem (see Lemma \ref{MontgomeryVaughan} below). A similar argument was previously used by the authors in \cite{MN} and \cite{MN2}. As a consequence, we only require Deligne's bound  $|\lambda_f(n)|\leq d(n)$ and estimates for sums of the form $$\sum_{n \le x} |\lambda_f(n)|^2 \quad \text{ and } \quad \sum_{n \le x} |\lambda_f(n)|$$
which are classical, and we do not require estimates for the shifted convolution sums 
\begin{equation}
\label{scs}
\sum_{n \le x} \lambda_f(n) \overline{\lambda_f(n+r)} \quad \quad (\text{for }r \in \mathbb{N})
\end{equation}
which arise when using explicit formulae techniques to estimate the sums in Proposition \ref{mainterms}. The fact that we can avoid this off-diagonal analysis is somewhat surprising, but seems to be analogous to the work of Ramachandra \cite{Ram} and of Zhang \cite{Z1,Z2} where it is shown that one can derive asymptotic expressions for the fourth moment of the Riemann zeta-function and continuous second moment of degree two $L$-functions, respectively, on the critical line without the need for estimates for the shifted convolution sums in \eqref{scs}. The present situation is more involved than these previous cases because we are averaging over zeros (as opposed to a continuous average), so it is perhaps even more striking that we can appeal to the Montgomery and Vaughan's mean-value theorem for Dirichlet polynomials in lieu of explicit formula techniques combined with estimates for shifted convolution sums. 

\smallskip

\noindent {\it 4.}    Using a heuristic argument based on the `$L$-functions ratios conjectures' of Conrey, Farmer, and Zirnbauer \cite{CFZ} (see also Conrey and Snaith \cite{CS}), we arrive at the following conjecture. 

\smallskip

\noindent{\bf Conjecture.} {\it Let  $f\in H_k(q,\chi)$, let $c_f$ be the constant in \eqref{eq:cfdefinition}, and let $X = \frac{\sqrt{q}T}{2 \pi}$. Then, we have}
\begin{equation}
   \label{eq:2ndmomentconjecture}
    \sum_{0<\gamma_{\!_f}\leq T} \big| L'(\rho_{\!_{f}},f) \big|^{2} = \frac{2}{3\pi} c_f T \log^4 X + O\big( T \log^3 X \big) 
\end{equation}
{\it where the implied constant depends only on $f$.}

\smallskip

\noindent Note that this is consistent with Theorem \ref{2nd_moment} and is analogous to a result of Gonek \cite{Go1} which states that
$$\sum_{0<\Im(\rho) \leq T} \big| \zeta'(\rho)\big|^{2} = \frac{T}{24\pi} \log^{4}T + O(T \log^3T) $$ 
assuming the Riemann hypothesis where $\rho$ runs through the non-trivial zeros of the Riemann zeta-function. 
However, since $L(s,f)$ is a degree two $L$-function, establishing \eqref{eq:2ndmomentconjecture} is comparable to establishing the conjectural formula
$$ \sum_{0<\Im(\rho)\leq T} \big| \zeta'(\rho)\big|^{4}
= \frac{T}{2880 \pi^3} \log^9 T + O(T \log^8 T). 
$$
Such a result appears to be unattainable using current techniques without some significantly new ideas. (See \cite{Ng} for some history and some results in this direction.) Likewise, we expect that some substantially new ideas are necessary in order to establish the above conjecture for the second moment of $ L'(\rho_{\!_{f}},f)$.

\smallskip

\subsection{Conventions and Notation}
Throughout the this article, we use $s=\sigma+it$ to denote a complex variable. For $f\in H_k(q,\chi)$, we denote the generalized Riemann hypothesis for $L(s,f)$ by $\text{RH}_{f}$.  For functions $g(x)$ and $h(x)$ we interchangeably use the notations  $g(x)=O(h(x))$, $g(x) \ll h(x)$, or $h(x) \gg g(x)$  to mean that there exists a constant $M >0$ such that $|g(x)| \le M |h(x)|$ for all sufficiently large $x$. The constants implied in our big-$O$, $\ll$, and $\gg$ estimates are allowed to depend on $ f\in H_k(q,\chi)$ and thus on $k$ and $q$.  In particular, we  often use the estimates $\log X=\log T +O(1)$ and $\log X \asymp \log T$ where $  X = \frac{\sqrt{q}T}{2 \pi}$ is the length of the Dirichlet polynomials in the approximate functional equation for $L'(s,f)$ in \eqref{eq:afe0}. The letter $p$ is used to denote a prime number. Finally, for a set $S \subset \mathbb{R}$, we write $I_{S}(t)$ to denote the indicator function of $S$. That is, $I_S(t)=1$ if $t \in S$ and $I_S(t)=0$ otherwise.

\subsection{Organization of the article}

The remainder of this article is devoted to proving the approximate functional equation for $L'(s,f)$, Proposition \ref{mainterms}, Proposition \ref{errordmv}, Theorem \ref{2kth_moment}, and Theorem \ref{shifted}. 
In \textsection 2, we collect several important properties of modular forms and their $L$-functions.  In \textsection 3, we establish an approximate functional equation for $L'(s,f)$, state two results concerning the distribution of the zeros of $L(s,f)$, and prove inequalities for $L'(s,f)/L(s,f)$ and $\log|L(s,f)|$. In \textsection 4, we state and prove three mean-value estimates for Dirichlet polynomials averaged over the zeros of $L(s,f)$ which are used to establish Proposition \ref{mainterms}, Proposition \ref{errordmv}, and Theorem \ref{shifted}, respectively. In \textsection 5, we prove various asymptotic formulae and bounds for arithmetic sums  involving $\lambda_f(n)$ and related arithmetic functions. 
 In \textsection 6, we deduce Propositions \ref{mainterms} and \ref{errordmv}. 
  In \textsection 7, we   prove Theorem \ref{shifted}. Finally, in \textsection 8, we deduce Theorem \ref{2kth_moment} from Theorem \ref{shifted}.

\section{Properties of modular forms and $L$-functions}

We begin by recalling some basic definitions concerning modular forms.
The Hecke congruence subgroup of level $q$ for $\Gamma =  SL_2(\mathbb{Z})$ is defined to be 
$$ \Gamma_0(q) = \left\{  \ \left( \begin{array}{cc}
a & b \\
c & d  
 \end{array} \right) \in \Gamma : q|c \right\}. $$
For a Dirichlet character $\chi$ (mod $q$), the complex vector space $S_k\big(\Gamma_0(q),\chi\big)$  consists of  functions $f$ on the upper half plane which satisfy
\[
   f \Big( \frac{az+b}{cz+d} \Big) = \chi(d) (cz+d)^k f(z) \  \text{ for all }  \ \Big( \begin{array}{cc}
a & b \\
c & d  
 \end{array} \Big) \in \Gamma_0(q)
\]
and which are holomorphic at their cusps.   There is a distinguished basis, $ H_k(q,\chi)$ of $S_k\big(\Gamma_0(q),\chi\big)$, called the set of normalized holomorphic newforms $f \in S_k\big(\Gamma_0(q),\chi\big)$.  Each element $f \in H_k(q,\chi)$ has a Fourier expansion of the form
\begin{equation}
   \label{fourierexp}
   f(z)=\sum_{n=1}^\infty \lambda_f(n) n^{(k-1)/2} e^{2\pi i n z} 
\end{equation}
for $\Im(z) >0$ where $\lambda_f(1)=1$, and from this the $L$-function associated to $f$ is defined by 
\begin{equation}
  \label{eulerproduct}
L(s,f) = \sum_{n=1}^{\infty} \frac{\lambda_f(n)}{n^{s}} =  \!  \prod_{p \text{ prime}} \left(1-\frac{\lambda_f(p)}{p^s}+\frac{\chi(p)}{p^{2s}} \right)^{-1}
\end{equation}
for $\Re(s)>1$.  In the Fourier expansion   \eqref{fourierexp} of $f(z)$, the normalizing factor $n^{(k-1)/2}$ implies that the critical line for $L(s,f)$ is $\Re(s) =1/2$, rather than $\Re(s) =k/2$. The $L$-function in \eqref{eulerproduct} can be analytically continued to all of $\mathbb{C}$ and satisfies the functional equation, in asymmetric form,
\begin{equation}\label{eq:func_eqn}
 L(s,f) = \varepsilon_{\!_f} \psi_{\!_f}(s) L(1\!-\!s,\bar{f}) 
 \end{equation}
where the root number $\varepsilon_{_f}$ satisfies $|\varepsilon_{\!_f}|= 1$,
\begin{equation} \label{eq:psi}
  \psi_{\!_f}(s) =  \left(\frac{\sqrt{q}}{2\pi}\right)^{1-2s} \frac{\Gamma(1\!-\!s \! +\! \frac{k-1}{2})}{\Gamma(s\!+\!\frac{k-1}{2})},
\end{equation}
and  $\bar{f}(z)=\overline{f(\bar{z})}$, so that $L(s,\overline{f}) = \sum_{n=1}^{\infty} \lambda_{\bar f}(n)n^{-s}$
where $  \lambda_{\bar f}(n) = \overline{\lambda_f(n)}$. 
The Euler product representation of $L(s,f)$ in \eqref{eulerproduct} may be written as 
\[
L(s,f) = \prod_{p \text{ prime}} \left(1-\frac{r_{f}(p)}{p^{s}} \right)^{-1}\left(1-\frac{s_{f}(p)}{p^{s}} \right)^{-1}
\]
where $r_f(p)+s_f(p)=\lambda_f(p)$ and $r_f(p)s_f(p) =\chi(p).$ Moreover, by comparing the coefficient of $p^{-ms}$ term on the right-hand side of this identity to the Dirichlet series for $L(s,f)$, it follows that 
\begin{equation}
   \label{eq:lambdafpk}
   \lambda_f(p^m) = \sum_{\ell=0}^{m} r_f(p)^\ell s_f(p)^{m-\ell}.
\end{equation}
From the work of Deligne \cite[Th\'{e}or\`{e}me 8.2]{D}, it follows that $|r_{f}(p)|=|s_f(p)|=1$ if $p$ does not divide $q$.  Hence, for $(p,q)=1$, we have $    |\lambda_f(p^m)| \le m+1. $
On the other hand, if $p|q$ then $s_f(p) =0$ so that $\lambda_f(p)=r_f(p)$ and also 
$\lambda_f(p^m) = \lambda_f(p)^m$.   By  Li \cite[Theorem 3]{Li},  if $p$ divides  $q$, then
$|\lambda_f(p)| \le 1$ and thus
$ |\lambda_f(p^m)| \le 1.$ Since the Fourier coefficients $\lambda_{f}(n)$ are multiplicative, the above estimates combine to give the bound
\begin{equation}
   \label{eq:Delignesbound}
   |\lambda_f(n) | \le d(n)
\end{equation}
for every natural number $n$, where $d(n)$ denotes the number of positive divisors of  $n$.
\smallskip

Logarithmically differentiating the Euler product in \eqref{eulerproduct}, we have
$$ \frac{d}{ds} \log L(s,f):= \frac{L'}{L}(s,f) = - \sum_{n=1}^{\infty} \frac{\Lambda_{f}(n)}{n^{s}} $$
for $\Re(s)>1$, where the coefficients $\Lambda_{f}(n)$ are supported on prime powers.
A straightforward calculation shows that 
$$ \Lambda_{f}(p^{m}) = \big(r_f(p)^m+s_f(p)^m \big) \log p $$
for primes $p$ and natural numbers $m$. We remark that in the case $m=1$ we have $ \Lambda_{f}(p)=\lambda_{f}(p)\log p,$ and in the case $m=2$ we have $\Lambda_f(p^2)=(\lambda_f(p^2)-\chi(p))\log p$. In general, the inequality 
\begin{equation}  \label{eq:lambdafbound}
|\Lambda_{f}(n)|\leq 2 \Lambda(n)  
\end{equation}
holds for every positive integer $n$ where $\Lambda(n)$ denotes von Mangoldt's function.

\smallskip

\section{Some estimates for $L(s,f)$, its derivative, and its logarithmic derivative}

In this section, we collect a number of analytic estimates for $L(s,f)$, its derivative, and its logarithmic derivative. Recall, we define the quantity $N_{\!_f}(T)$ to be the number of non-trivial zeros $\rho_{\!_f}=\beta_{\!_f}+i\gamma_{\!_f}$ of $L(s,f)$ in the region $0\leq\beta_{\!_f}\leq1$ and $0\leq \gamma_{\!_f} \leq T$, counted with multiplicity. If $t$ does not correspond to any ordinate $\gamma_f$ of a zero $\rho_f$ of $L(s,f)$, let 
$$S_{\!_f}(t) =  -\frac{1}{\pi} \Im 
\Big(
\int_{1/2}^\infty \frac{L'}{L}(\sigma+it,f) \ \! d\sigma
\Big)
$$
and otherwise let
$$ S_{\!_f}(t) = \frac{1}{2}\lim_{\varepsilon \to 0} \big( S_{\!_f}(t+\varepsilon) + S_{\!_f}(t-\varepsilon)   \big).$$
Then the following formula for $N_{\!_f}(T)$ holds.

\begin{lemma} \label{NfTSfT}
For $t\geq 10$, we have $N_{\!_f}(t)=\vartheta_{\!_f}(t)+S_{\!_f}(t)$ where $\vartheta(t)$ is continuously differentiable, 
\[
   \vartheta_{\!_f}(t) =  \frac{t}{\pi} \log
   \Big( \frac{\sqrt{q}t}{2 \pi e} \Big) + O(1), 
\]
$\vartheta'_{\!_f}(t) = O(\log t)$, and $S_{\!_f}(t)=O(\log t)$ where the implied constants depend only of $f$. Moreover, on $\text{RH}_f$, we have the stronger bound
\begin{equation} \label{eq:SfTbound}
S_{\!_f}(t) = O\!\left(\frac{\log t}{\log\log t}\right).
\end{equation}
\end{lemma}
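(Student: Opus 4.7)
The plan is to follow the standard Riemann--von Mangoldt recipe: apply the argument principle to a completed $L$-function on a rectangle, extract the main term from Stirling's formula, and bound the remainder $S_{\!_f}(t)$ using Jensen's inequality (unconditionally) and Selberg's method (under $\text{RH}_f$). Concretely, set $\xi_{\!_f}(s) = \big(\tfrac{\sqrt{q}}{2\pi}\big)^s \Gamma\!\big(s+\tfrac{k-1}{2}\big) L(s,f)$, which is entire of order $1$ and, by \eqref{eq:func_eqn} and \eqref{eq:psi}, satisfies the symmetric functional equation $\xi_{\!_f}(s) = \varepsilon_{\!_f} \xi_{\bar f}(1-s)$. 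I would apply the argument principle to $\xi_{\!_f}$ on the positively oriented rectangle with vertices $-1 \pm iT$ and $2 \pm iT$ (shifting $T$ by a bounded amount if it coincides with some $\gamma_{\!_f}$) to get
\[
2\pi N_{\!_f}(T) = \Delta_{\mathcal{R}} \arg \xi_{\!_f}(s),
\]
and then use the functional equation to collapse the contour to the right half of the rectangle, which doubles the contribution.

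For the main term $\vartheta_{\!_f}(t)$, I would compute the change in argument of $\big(\tfrac{\sqrt{q}}{2\pi}\big)^s \Gamma(s+\tfrac{k-1}{2})$ along the broken line from $2$ to $2+it$ to $\tfrac{1}{2}+it$, using Stirling's formula for the gamma factor: the $\log\Gamma(s+\tfrac{k-1}{2})$ asymptotic yields the main term $\tfrac{t}{\pi}\log\tfrac{\sqrt q\, t}{2\pi e}+O(1)$, with derivative $\tfrac{1}{\pi}\log\tfrac{\sqrt q\, t}{2\pi} + O(1/t) = O(\log t)$. The smoothness and the derivative bound on $\vartheta_{\!_f}$ both fall out of Stirling's formula applied term-by-term. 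The remaining piece is $S_{\!_f}(T) = \tfrac{1}{\pi}\arg L(\tfrac{1}{2}+iT,f)$, obtained by a continuous variation along the horizontal segment from $2+iT$ to $\tfrac{1}{2}+iT$, which matches the contour integral definition given in the lemma.

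To bound $S_{\!_f}(T) = O(\log T)$ unconditionally, I would follow Backlund's argument. Writing $\Delta \arg L(s,f) = \Im \int_{2+iT}^{1/2+iT} \tfrac{L'}{L}(s,f)\,ds$, one needs to control the number of sign changes of $\Re L(\sigma+iT,f)$ for $\tfrac{1}{2}\le \sigma \le 2$. Each such sign change contributes $O(1)$, and their number is bounded via Jensen's formula applied to the function $g(z) = \tfrac{1}{2}\big(L(z+iT,f)+\overline{L(\bar z + iT,f)}\big)$ on a disk of bounded radius centered at $2+iT$: the upper bound $L(s,f)\ll t^{A}$ in a vertical strip (a convexity bound) together with $g(2+iT) \gg 1$ from the Euler product gives a count of zeros in the disk of size $O(\log T)$. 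This is the standard Backlund--Titchmarsh lemma adapted to $L(s,f)$.

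Under $\text{RH}_f$, the improvement to $O(\log T/\log\log T)$ follows Selberg's argument. The input is the Hadamard-style formula $\tfrac{L'}{L}(s,f) = \sum_{|t-\gamma_{\!_f}|\le 1} \tfrac{1}{s-\rho_{\!_f}} + O(\log T)$ for $s$ in a bounded strip and $\Im s = t$, together with the fact that, assuming $\text{RH}_f$, all $\rho_{\!_f}$ with $|\gamma_{\!_f}-t|\le 1$ lie on $\Re(s)=\tfrac{1}{2}$. Combining this with a smoothed Perron-type average of $|L(\sigma+it,f)|$ to the right of the critical line — or equivalently using Selberg's identity expressing $\log L(s,f)$ in terms of a Dirichlet polynomial supported on prime powers with weights truncated at height $\log T/\log\log T$, as in Selberg \cite{Se1} — yields the sharper bound. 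The main obstacle here is the verification of the Selberg-type mean-square estimates for the resulting prime-power Dirichlet polynomial twisted by $\Lambda_{\!_f}(n)$; these are handled using \eqref{eq:lambdafbound} and classical prime-sum estimates, giving the claim. Since this lemma is a technical preliminary, I would in practice cite the corresponding results from Iwaniec--Kowalski for automorphic $L$-functions rather than reprove them in detail.
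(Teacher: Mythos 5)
Your plan is essentially the route the paper takes: the unconditional statement (main term via the argument principle and Stirling, plus $S_{\!_f}(t)=O(\log t)$ by the Backlund-type argument) is exactly what the paper imports by citing Theorem 5.38 of Iwaniec--Kowalski, and the conditional improvement is obtained, as you indicate, from Selberg's method --- the paper quotes Selberg's formula $(2.2')$ from \cite{Se4}, which expresses $S_{\!_f}(t)$ as a short prime-power Dirichlet polynomial with weights $\theta_x(n)$ plus an error $O(\log|t|/\log x)$, and then chooses $x=\sqrt{\log t}$. One small correction to your sketch of the conditional step: no Selberg-type mean-square estimates are required for the pointwise bound $S_{\!_f}(t)\ll \log t/\log\log t$; with the truncation $x=\sqrt{\log t}$ the sums of length $x^2=\log t$ are estimated trivially using $|\Lambda_f(n)|\le 2\Lambda(n)$, and the $O(\log|t|/\log x)$ term alone produces the stated bound, so the ``main obstacle'' you identify is not actually present.
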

\begin{proof}
The unconditional estimate for $N_{\!_f}(t)$ can be found in Theorem 5.38 of Iwaniec and Kowalski \cite{IK}. 
The estimate for $ S_{\!_f}(t)$ assuming $\text{RH}_f$ follows from work of Selberg. In particular, equation ($2.2^\prime$) of \cite{Se4} implies that, for $2\leq x\leq t^2$, 
\begin{equation*}
S_{_f}(t) = -\frac{1}{\pi} \sum_{n<x^2} \frac{\Lambda_f(n) \theta_x(n) \sin(t \log n)}{n^{\frac{1}{2}} \log n} + O\left(\frac{1}{\log x} \Bigg|\sum_{n<x^2} \frac{\Lambda_f(n) \theta_x(n)}{n^{\sigma_x+it}} \Bigg| \right) +O\left(\frac{\log |t|}{\log x}\right)
\end{equation*}
 where $\sigma_x=\frac{1}{2}+\frac{1}{\log x}$, $\theta_x(n) = 1$ if $1\leq n \leq x$, $\theta_x(n) = 2{-}\frac{\log n}{\log x}$ if $x\leq n \leq x^2$, and $\theta_x(n)=0$ otherwise. Choosing $x = \sqrt{\log t}$, using the inequality $|\Lambda_f(n)| \leq 2 \Lambda(n)$, and estimating trivially, we deduce \eqref{eq:SfTbound}.
\end{proof}

\smallskip

\begin{lemma}  \label{LprimeLapprox}
For each $T\geq 10$ there exists a real number $\tau$ satisfying $T\leq \tau \leq T+1$ such that
\begin{equation}\label{log_deriv_seq}
 \left|\frac{L'}{L}(\sigma+i \tau,f) \right| = O\big(\log^2 T\big)
 \end{equation}
uniformly for $-1\leq \sigma \leq 2$ and, assuming $\text{RH}_f$, that
\begin{equation}\label{log_deriv_int}
 \int_{-1}^2 \left|\frac{L'}{L}(\sigma+i \tau,f) \right| \ \! d\sigma = O\big( \log T \big).
 \end{equation}
\end{lemma}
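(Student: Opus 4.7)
The plan is to apply the standard Hadamard-product reduction of $L'/L(s,f)$ to a sum over nearby non-trivial zeros, and then choose $\tau$ by combining pigeonhole (to force the pointwise bound) with an averaging argument via Markov's inequality (to control the integral).

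First I would establish the identity
\[
\frac{L'}{L}(s,f) = \sum_{|\gamma_{\!_f} - t| \leq 1} \frac{1}{s - \rho_{\!_f}} + O(\log T)
\]
uniformly for $s = \sigma + it$ with $-1 \leq \sigma \leq 2$ and $T \leq t \leq T+1$. This is obtained by subtracting the value at the reference point $s_0 = 2+it$, where $L'/L(s_0,f) = O(1)$ by absolute convergence of the defining Dirichlet series, and invoking the logarithmic derivative of the Hadamard product for the completed $L$-function. The $\Gamma'/\Gamma$ contribution to the difference is $O(1)$ by Stirling; the tail $|\gamma_{\!_f} - t| > 1$ contributes $O(\log T)$ via the telescoping bound $|1/(s-\rho_{\!_f}) - 1/(s_0-\rho_{\!_f})| \ll 1/(\gamma_{\!_f}-t)^2$ combined with the zero-count estimate $N_{\!_f}(t+1) - N_{\!_f}(t) \ll \log T$ from Lemma \ref{NfTSfT}; and the near-zero terms $1/(s_0-\rho_{\!_f})$, of which there are $O(\log T)$, are each $O(1)$.

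For part (1), a pigeonhole argument delivers $\tau \in [T,T+1]$ with $|\tau - \gamma_{\!_f}| \geq c/\log T$ for every zero satisfying $|\gamma_{\!_f} - T - \tfrac{1}{2}| \leq \tfrac{3}{2}$. Indeed, excising intervals of radius $c/\log T$ around each such $\gamma_{\!_f}$ removes a subset of $[T,T+1]$ of measure $O(c)$, which is less than $\tfrac{1}{2}$ for $c$ small enough. For admissible $\tau$ the reduced sum is dominated by $\sum_{|\gamma_{\!_f} - \tau|\leq 1} 1/|\tau - \gamma_{\!_f}| \ll (\log T)(\log T) = O(\log^2 T)$, uniformly in $\sigma$.

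For part (2), assuming $\mathrm{RH}_f$ so that $|s-\rho_{\!_f}|^2 = (\sigma-\tfrac{1}{2})^2 + (\tau-\gamma_{\!_f})^2$, an explicit antiderivative gives
\[
\int_{-1}^{2} \frac{d\sigma}{|s-\rho_{\!_f}|} \ll 1 + \log\bigl(1/|\tau-\gamma_{\!_f}|\bigr)
\]
whenever $|\tau-\gamma_{\!_f}| \leq 1$. Set $F(\tau) := \sum_{|\gamma_{\!_f} - \tau|\leq 1}\bigl(1 + \log(1/|\tau-\gamma_{\!_f}|)\bigr)$. Exchanging summation and integration, each zero contributes $\int_{-1}^{1}(1+\log(1/|u|))\,du = O(1)$, and there are $O(\log T)$ relevant zeros, so $\int_T^{T+1} F(\tau)\,d\tau = O(\log T)$. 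By Markov's inequality, the set where $F(\tau) > K\log T$ has measure $O(1/K)$. Choosing $K$ large and $c$ small so that both exceptional sets have combined measure strictly less than $1$, I can select a single $\tau$ satisfying both requirements, and then (1) and (2) follow from the reduction formula above. The main (mild) obstacle is this dovetailing of the pigeonhole and Markov exceptional sets to produce a common admissible $\tau$; every other step is a routine calculation with the Hadamard product, Stirling's formula, and the density estimate from Lemma \ref{NfTSfT}.
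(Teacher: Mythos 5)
Your argument for \eqref{log_deriv_seq} is essentially the paper's: the unconditional Iwaniec--Kowalski reduction of $L'/L(s,f)$ to the zeros with $|\gamma_{\!_f}-t|\le 1$ plus a pigeonhole choice of $\tau$ spaced $\gg 1/\log T$ from all nearby ordinates, together with the zero count $N_{\!_f}(T+1)-N_{\!_f}(T)\ll\log T$. For \eqref{log_deriv_int}, however, you take a genuinely different and correct route. The paper uses $\text{RH}_f$ in an essential way: Selberg's bound $S_{\!_f}(t)=O(\log t/\log\log t)$ gives only $O(\log T/\log\log T)$ zeros in a window of length $1/\log\log T$, so a sharpened reduction formula applies; the functional equation folds $[-1,\tfrac12]$ onto $[\tfrac12,2]$, the strip $[\tfrac12,\tfrac12+1/\log T]$ is handled by the pointwise bound, and each remaining zero contributes $\int_{1/2+1/\log T}^{2}d\sigma/(\sigma-\tfrac12)\ll\log\log T$, giving $(\log T/\log\log T)\cdot\log\log T\ll\log T$. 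You instead bound each nearby zero's contribution to the $\sigma$-integral by $1+\log(1/|\tau-\gamma_{\!_f}|)$, observe that $\int_{-1}^{1}\bigl(1+\log(1/|u|)\bigr)du=O(1)$ so the $\tau$-average of $F(\tau)$ over $[T,T+1]$ is $O(\log T)$ (only $O(\log T)$ zeros have $\gamma_{\!_f}\in[T-1,T+2]$), and then dovetail the Markov exceptional set with the pigeonhole set; this is sound, and the common $\tau$ does the job. Two remarks on the comparison: your method is actually stronger than what the paper records, since the bound $\int_{-1}^{2}d\sigma/|s-\rho_{\!_f}|\ll 1+\log(1/|\tau-\gamma_{\!_f}|)$ holds for any $\beta_{\!_f}\in[0,1]$, so your argument never uses $\text{RH}_f$ and yields the $O(\log T)$ integral bound unconditionally, improving the remark after the lemma that the paper's approach gives only $O(\log T\log\log T)$ unconditionally; on the other hand, the paper's conditional argument buys the integral bound for \emph{every} $\tau$ admissible for \eqref{log_deriv_seq}, whereas yours requires the additional first-moment selection of $\tau$, which you correctly carry out.
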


\smallskip

\noindent{\sc Remark.} Unconditionally, we can show that the integral in \eqref{log_deriv_int} is  $O(\log T \log\log T)$.

\smallskip

\begin{proof} 
Proposition 5.7 of Iwaniec and Kowalski \cite{IK} implies that if $T \ge 10$ and  $T\leq t \leq 2T$, then
\begin{equation}\label{logderiv} 
\frac{L'}{L}(s,f) \ = \sum_{|\gamma_{\!_f}-t|\leq 1} \frac{1}{s\!-\!\rho_{\!_f}} + O(\log T)
\end{equation}
uniformly for $-1\leq \sigma \leq 2$. Now Lemma \ref{NfTSfT} implies that $N_{\!_f}(T+1)- N_{\!_f}(T) \ll \log T$ uniformly in $T$, and thus there exists $\tau \in [T,T+1]$ such that  $|\gamma_{_f} - \tau| \gg (\log T)^{-1}$
for all non-trivial zeros $\rho_{_f}$ of $L(s,f)$.   For each term in the sum we have
\begin{equation*}
 \frac{1}{|s-\rho_{_f}|} \ll \frac{1}{|\gamma_{_f}-\tau|} \ll \log T,
 \end{equation*} 
and since the number of zeros being summed is $O(\log T)$ the first estimate \eqref{log_deriv_seq} in the lemma now follows. 

\smallskip

Now assume $\text{RH}_f$. Then, by Lemma \ref{NfTSfT}, we know that $N_{\!_f}(T+1/\log\log T)- N_{\!_f}(T) \ll \log T/\log\log T.$ From this, by modifying the proof of Proposition 5.7 of \cite{IK} in a straightforward manner, for $T \ge 10$ and  $T\leq t \leq 2T$, we derive that 
\begin{equation}\label{logderiv2} \frac{L'}{L}(s,f) \ = \sum_{|\gamma_{\!_f}-t|\leq \frac{1}{\log\log T}} \frac{1}{s\!-\!\rho_{\!_f}} + O(\log T)
\end{equation}
uniformly for $-1\leq \sigma \leq 2$, and the number of terms in the sum is $O(\log T/\log\log T)$. Now let $I$ denote the integral in \eqref{log_deriv_int} that we wish to estimate. Then $I=I_1 + I_2$ where
$$ I_1= \int_{-1}^{\frac{1}{2}} \left| \frac{L'}{L}(\sigma\!+\!i\tau,f) \right| \ \! d\sigma \quad \text{ and } \quad I_2=\int_{\frac{1}{2}}^2 \left| \frac{L'}{L}(\sigma\!+\!i\tau,f) \right| \ \! d\sigma. $$ 
By the functional equation for $L(s,f)$ and Stirling's formula for $\Gamma'(s)/\Gamma(s)$, we have
$$ \frac{L'}{L}(s,f)=-\frac{L'}{L}(1\!-\!s,\bar{f}) + O(\log t)$$
uniformly for $-1\leq \sigma \leq 2$. After a change of variables, it follows that $I_1= I_2 + O(\log T)$ and hence that $I=2 I_2 + O(\log T).$ By \eqref{log_deriv_seq}, estimating trivially, we have
\begin{eqnarray*}
\int_{\frac{1}{2}}^{\frac{1}{2}+\frac{1}{\log T}} \left| \frac{L'}{L}(\sigma\!+\!i\tau,f) \right| \ \! d\sigma \ \ll \ \log T.
\end{eqnarray*}
Therefore, by \eqref{logderiv2}, we have 
\begin{equation*}
\begin{split}
 I &=   2 \int_{\frac{1}{2}+\frac{1}{\log T}}^2 \left| \frac{L'}{L}(\sigma\!+\!i\tau,f) \right| \ \! d\sigma + O\big(\log T\big) 
 \\
  &\ll   \sum_{|\gamma_{_f}-\tau|\leq \frac{1}{\log\log \tau}} \left[  \int_{\frac{1}{2}+\frac{1}{\log T}}^2 \frac{d\sigma}{\sigma\!-\!\frac{1}{2}} \right] \ \! + \ \! \log T
 \\
 &\ll \!\!\! \sum_{|\gamma_{_f}-\tau|\leq \frac{1}{\log\log \tau}} \log\log T \ \! + \ \! \log T
 \\
 &\ll  \ \log T
\end{split}
 \end{equation*}
since there are $O(\log T/\log\log T)$ terms in the sum. This completes the proof of the lemma. 
\end{proof}

\smallskip

In order to prove Theorem \ref{shifted} (and thus Theorem \ref{2kth_moment}), we require the following version of the Landau-Gonek explicit formula for the zeros of $L(s,f)$. This formula is not used in the proof of Theorem \ref{2nd_moment}.

\begin{lemma}\label{gonek}
Let $x,T>1$ and let $\rho_{\!_f}=\beta_{_f}+i\gamma_{_f}$ denote a non-trivial zero of $L(s,f)$.  Then
\begin{equation*}
\begin{split}
\sum_{0<\gamma_f \leq T} x^{\rho_{\!_f}} &= -\frac{T}{2\pi}\Lambda_f(x) + O\big(x\log(2xT)\log\log(3x)\big)
\\
&\quad + O\left(\log x \ \! \min\left(T,\frac{x}{\langle x\rangle}\right)\right) + O\left(\log(2T)\min\Big(T,\frac{1}{\log x}\Big)\right), 
\end{split}
\end{equation*}
where $\langle x\rangle$ denotes the distance from $x$ to the nearest prime power other than $x$ itself, $\Lambda_f(x)$ is the arithmetic function defined in section 2 if $x$ is a positive integer, and $\Lambda_f(x)=0$ otherwise.
\end{lemma}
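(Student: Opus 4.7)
The plan is to adapt Gonek's classical argument for $\zeta(s)$ (see \cite{Go1}) to the present setting by applying the residue theorem to $\frac{L'}{L}(s,f)\, x^s$ on a rectangular contour, with Lemma \ref{LprimeLapprox} controlling the horizontal sides and the functional equation \eqref{eq:func_eqn} controlling the left side.

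First I would fix $c = 1 + 1/\log(2x)$ so that $-\frac{L'}{L}(s,f) = \sum_n \Lambda_f(n) n^{-s}$ converges absolutely on the line $\Re(s) = c$, and then invoke Lemma \ref{LprimeLapprox} to replace $T$ by some $\tau \in [T, T+1]$ at which $|L'/L(\sigma + i\tau, f)| \ll \log^2 T$ uniformly for $-1 \le \sigma \le c$; the discrepancy $\sum_{T < \gamma_f \le \tau} x^{\rho_f} \ll x \log T$ is absorbed into the stated error. Let $R$ be the positively oriented rectangle with vertices $c - i$, $c + i\tau$, $-d + i\tau$, $-d - i$, where $d \in (0, (k-1)/2)$ is fixed so that no trivial zero of $L(s,f)$ lies inside $R$. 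The residue theorem then gives
\[
\sum_{0 < \gamma_f \le \tau} x^{\rho_f} \ = \ \frac{1}{2\pi i}\oint_{R} \frac{L'}{L}(s,f)\, x^s \, ds,
\]
and the four edge integrals must be estimated.

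The right edge provides the main term: substituting the Dirichlet series and interchanging summation and integration, the diagonal term $n = x$ gives $-\tau \Lambda_f(x)/(2\pi) = -T\Lambda_f(x)/(2\pi) + O(\Lambda_f(x))$ via the elementary evaluation $\frac{1}{2\pi i}\int_{c-i}^{c+i\tau} ds = (\tau+1)/(2\pi)$, while the off-diagonal tail $-\sum_{n \ne x}\Lambda_f(n)\int (x/n)^s\, ds /(2\pi i)$ would be partitioned into ranges where $|n-x|$ is comparable to $x$ versus comparable to $\langle x \rangle$. Partial summation in the former range (using the prime number theorem majorization $\sum_{n \le y} |\Lambda_f(n)| \le 2\sum_{n \le y} \Lambda(n) \ll y$, valid since $|\Lambda_f(n)| \le 2\Lambda(n)$) together with the bound $|\log(x/n)| \gg \langle x \rangle /x$ in the latter range produces precisely the error terms $O(\log x \cdot \min(T, x/\langle x \rangle))$ and $O(\log(2T) \min(T, 1/\log x))$. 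The two horizontal segments at heights $\tau$ and $-1$ are bounded by $O(x \log(2xT)\log\log(3x))$ using the estimate from Lemma \ref{LprimeLapprox} combined with a dyadic refinement near $\sigma = c$, as in Gonek. The left edge is handled via the functional equation, giving $L'/L(s,f) = -L'/L(1-s,\bar{f}) + \psi_f'/\psi_f(s) \ll \log(|s|+k)$ (using absolute convergence of the Dirichlet series for $L'/L(\cdot,\bar{f})$ on $\Re(1-s) = 1+d$ and Stirling's formula for the ratio $\psi_f'/\psi_f$); this contributes $\ll x^{-d}(T+1)\log T$, absorbed into the first error term.

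The main obstacle is the careful partition of the off-diagonal tail on the right edge, which is what produces the $\langle x \rangle$-dependence in the second error term; this bookkeeping is essentially Gonek's original argument with $\Lambda_f$ in place of $\Lambda$, using the majorization $|\Lambda_f(n)| \le 2\Lambda(n)$ to carry over all needed estimates on partial sums. The one place where the modular setting genuinely enters is the left edge, and this is handled immediately via the functional equation \eqref{eq:func_eqn}.
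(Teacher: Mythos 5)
Your proposal is essentially the paper's own proof: the authors simply note that Gonek's Theorem 1 argument carries over \emph{mutatis mutandis}, using the zero-sum representation of $L'/L$ (equivalently Lemma \ref{LprimeLapprox}) on the horizontal sides, the functional equation on the left edge, and the majorization $|\Lambda_f(n)|\le 2\Lambda(n)$ on the right edge, which is exactly the contour computation you outline. The only points needing a word of care are the bottom edge (your rectangle down to height $-1$ picks up the $O(1)$ many zeros with $-1<\gamma_{\!_f}\le 0$, and for small weight the choice $d\in(0,(k-1)/2)$ may fail), but these contribute $O(x)$ and are absorbed into the first error term.
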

\begin{proof}
This can be proved {\it mutatis mutandis} following the proof of Theorem 1 of Gonek \cite{Go2} using the estimate for $L'(s,f)/L(s,f)$ in  \eqref{logderiv} and the inequality $|\Lambda_f(n)|\leq 2 \Lambda(n)$.
\end{proof}

\begin{lemma} \label{approx}
Let $s=\frac{1}{2}+it$ and let $T\geq 10$. Then, for 
$X = \frac{\sqrt{q} T}{2 \pi}$ and  $T\leq t \leq 2T$, we have
\begin{equation}
\begin{split}
L'(s,f) = \sum_{n\leq X} \frac{\alpha_f(n)}{n^s} + \varepsilon_{_f}  \psi_{\!_f}(s) \sum_{n\leq X} \frac{\beta_{\bar{f},X}(n)}{n^{1-s}} + \mathcal{E}(s,f)
\end{split}
\end{equation}
where $\alpha_f(n)=-\lambda_f(n) \log n$, $\beta_{\bar{f},X}(n)=-\lambda_{\bar{f}}(n)\log(X^2/n)$, and $\displaystyle{ \mathcal{E}(s,f) =  \sum_{j=1}^5 \mathcal{E}_j(s,f)}$ where 
\begin{equation}
\begin{split}
 \label{eq:Esf}
 \mathcal{E}_1(s,f) & =  \sum_{n\leq X} \frac{\alpha_f(n)}{n^s}\big(e^{-\frac{n}{X}}\!-\!1\big), \
  \mathcal{E}_2(s,f)  = \sum_{n> X} \frac{\alpha_f(n)}{n^s} e^{-\frac{n}{X}}, \ \mathcal{E}_3(s,f)  = O\Big(  \Big| \sum_{n\leq X} \frac{\lambda_f(n)}{n^s} \Big|\Big),
 \\
  \mathcal{E}_4(s,f) & =  - \frac{ \varepsilon_{_f}}{2\pi i} \int_{-\frac{3}{4}-i\infty}^{-\frac{3}{4}+i\infty} \Gamma(w) X^w \frac{d}{ds} \Bigg[ \psi_{\!_f}(s\!+\!w)\sum_{n>X} \frac{\lambda_{\bar{f}}(n)}{n^{1-s-w}}\Bigg] \ \! dw, \ \text{ and} 
\\
  \mathcal{E}_5(s,f) & = - \frac{ \varepsilon_{_f}}{2\pi i} \int_{\frac{1}{4}-i\infty}^{\frac{1}{4}+i\infty} \Gamma(w) X^w \frac{d}{ds} \Bigg[   \psi_{\!_f}(s\!+\!w)\sum_{n\leq X} \frac{\lambda_{\bar{f}}(n)}{n^{1-s-w}}\Bigg] \ \! dw.
\end{split}
\end{equation}
\end{lemma}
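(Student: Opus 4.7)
The plan is to derive the approximate functional equation through a Mellin transform argument combined with two contour shifts. I would begin with the identity
$$\sum_{n=1}^{\infty} \frac{\lambda_f(n)}{n^s}\, e^{-n/X} \;=\; \frac{1}{2\pi i}\int_{(c)} \Gamma(w)\, X^w\, L(s\!+\!w,f)\, dw,$$
valid for any $c$ with $c+\Re(s)>1$ (a consequence of the Mellin pair $e^{-z}=\tfrac{1}{2\pi i}\int_{(c)}\Gamma(w)z^{-w}dw$). Differentiating both sides in $s$ under the integral sign, justified by the absolute convergence of the Dirichlet series on $(c)$, converts $\lambda_f(n)$ into $\alpha_f(n)=-\lambda_f(n)\log n$ and $L$ into $L'$.

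Next I would shift the contour leftward from $(c)$ to $(-3/4)$, picking up the simple pole of $\Gamma(w)$ at $w=0$ whose residue in the integrand is precisely $L'(s,f)$. Splitting the exponentially weighted sum on the right-hand side as $\sum_n = \sum_{n\le X}+\sum_{n>X}$ and decomposing $e^{-n/X}=1+(e^{-n/X}-1)$ in the first piece extracts the Dirichlet polynomial $\sum_{n\le X}\alpha_f(n)/n^s$ together with the error terms $\mathcal{E}_1$ and $\mathcal{E}_2$, leaving
$$L'(s,f) = \sum_{n\le X}\frac{\alpha_f(n)}{n^s} + \mathcal{E}_1(s,f) + \mathcal{E}_2(s,f) - \frac{1}{2\pi i}\int_{(-3/4)} \Gamma(w)\, X^w\, L'(s\!+\!w,f)\, dw.$$
On $\Re(w)=-3/4$ we have $\Re(1-s-w)=5/4>1$, so the functional equation yields $L'(s+w,f)=\varepsilon_f \tfrac{d}{ds}\bigl[\psi_f(s+w)\,L(1-s-w,\bar{f})\bigr]$, and $L(1-s-w,\bar{f})=\sum_n \lambda_{\bar{f}}(n)/n^{1-s-w}$ converges absolutely on this line. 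Splitting the series at $n=X$ yields $\mathcal{E}_4$ from the tail $n>X$, while the partial sum over $n\le X$ is entire in $w$.

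For the finite-sum piece I would shift the contour rightward from $(-3/4)$ across the pole at $w=0$ to $(1/4)$. Because this shift moves in the opposite direction to the first one, the residue enters with the opposite sign relative to that earlier shift; the leftover integral on $(1/4)$ is exactly $\mathcal{E}_5$, and the residue equals
$$\varepsilon_f\,\frac{d}{ds}\!\left[\psi_f(s)\sum_{n\le X}\frac{\lambda_{\bar{f}}(n)}{n^{1-s}}\right] = \varepsilon_f\psi_f'(s)\sum_{n\le X}\frac{\lambda_{\bar{f}}(n)}{n^{1-s}} + \varepsilon_f\psi_f(s)\sum_{n\le X}\frac{\lambda_{\bar{f}}(n)\log n}{n^{1-s}}.$$
A Stirling calculation applied to $\psi_f$ gives $\psi_f'(s)/\psi_f(s)=-2\log X+O(1)$ uniformly for $s=\tfrac12+it$ with $T\le t\le 2T$; substituting this and invoking $\log(X^2/n)=2\log X-\log n$ rewrites the right-hand side as
$$\varepsilon_f\psi_f(s)\sum_{n\le X}\frac{\beta_{\bar{f},X}(n)}{n^{1-s}} \;+\; O\!\left(|\psi_f(s)|\cdot\Big|\sum_{n\le X}\frac{\lambda_{\bar{f}}(n)}{n^{1-s}}\Big|\right).$$
On the critical line $|\psi_f(\tfrac12+it)|=1$ and $\overline{\lambda_{\bar{f}}(n)/n^{1-s}}=\lambda_f(n)/n^s$, so the remainder is of the correct size to be absorbed into $\mathcal{E}_3(s,f)$.

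The main technical care is needed in two places: tracking signs carefully through the two contour shifts (opposite directions producing residues with opposite signs), and verifying that the horizontal segments of each rectangular contour vanish in the limit, which follows from the exponential decay of $\Gamma(w)$ on vertical lines combined with standard convexity bounds for $L$ and $L'$ in the strip $-3/4\le\Re(w)\le c$. The most delicate step is the final algebraic rearrangement that isolates $\varepsilon_f\psi_f(s)\sum\beta_{\bar{f},X}(n)/n^{1-s}$ as the main term modulo an error of the size defining $\mathcal{E}_3$; once this identification is in hand, assembling the five error contributions $\mathcal{E}_1,\ldots,\mathcal{E}_5$ into $\mathcal{E}(s,f)$ is routine bookkeeping.
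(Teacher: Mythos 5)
Your proposal is correct and follows essentially the same route as the paper: the smoothed Mellin identity, a leftward shift to $\Re(w)=-\tfrac34$ picking up the pole of $\Gamma(w)$ at $w=0$, the functional equation with the series split at $X$, a rightward shift of the $n\le X$ piece to $\Re(w)=\tfrac14$, and the Stirling estimate $\psi_f'/\psi_f(s)=-2\log X+O(1)$ to produce the $\beta_{\bar f,X}$ coefficients with the leftover absorbed into $\mathcal{E}_3$. The only (harmless) difference is that you differentiate in $s$ under the Mellin integral at the outset, whereas the paper first derives the identity for $L(s,f)$ and differentiates at the end; the resulting error terms $\mathcal{E}_1,\dots,\mathcal{E}_5$ are identical.
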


\begin{proof}  
The proof of this lemma is based upon Ramachandra's proof of the approximate functional equation for the square of the Riemann zeta-function and related functions (see Theorem 2 of  \cite{Ram}). We begin with the Mellin transform identity 
\[
  e^{-t} = \frac{1}{2 \pi i} \int_{2-i\infty}^{2+i\infty} \Gamma(w) \, t^{-w} \ \! dw 
\]
for $t >0$, which implies that 
\[
   \ \sum_{n=1}^\infty \frac{\lambda_f(n)}{n^s}e^{-\frac{n}{X}} \ = \
 \frac{1}{2\pi i} \int_{2-i\infty}^{2+i\infty} L(s\!+\!w,f) \Gamma(w) X^w \ \! dw.
 \]
This formula follows by expanding $L(s+w,f)$ as an absolutely convergent Dirichlet series, interchanging the sum and the integral, and then integrating term-by-term.  Next, we shift the line of integration of the integral left from $\Re(w)=2$ to $\Re(w)=-\frac{3}{4}$ and derive that 
\[
   \sum_{n=1}^\infty \frac{\lambda_f(n)}{n^s}e^{-\frac{n}{X}} \ = \ L(s,f) + 
 \frac{1}{2\pi i} \int_{-\frac{3}{4}-i\infty}^{-\frac{3}{4}+i\infty} L(s\!+\!w,f) \Gamma(w) X^w \ \! dw \, ;
\]
the first term on the right-hand side comes from the the simple pole of the integrand at $w=0$ which has a residue of $L(s,f)$.  We decompose the new integral along $\Re(w)=-\frac{3}{4}$ into two integrals by using the functional equation $
L(s\!+\!w,f)= \varepsilon_{_f} \psi_{f}(s\!+\!w)L(1\!-\!s\!-\!w, \bar{f})$ and then 
by absolute convergence we write
\[
     L(1-s-w,\bar{f}) = 
  \sum_{n \le X} \frac{\lambda_{\bar{f}}(n)}{n^{1-s-w}} + \sum_{n > X} \frac{\lambda_{\bar{f}}(n)}{n^{1-s-w}}. 
\]
We now shift the line of integration in the first integral containing the sum with the terms $n\leq X$ right from $\Re(w)=-\frac{3}{4}$ to $\Re(w)=\frac{1}{4}$. Again we pass over the pole of the $\Gamma(w)$ at $w=0$. Since the residue of the integrand at $w=0$ in this integral is 
\[
 - \varepsilon_{_f} \psi_{\!_f}(s)\sum_{n\leq X}\frac{\lambda_{\bar{f}}(n)}{n^{1-s}},
\] 
by collecting and rearranging terms we establish 
 \begin{equation}\label{L}
\begin{split}
L(s,f) \ &= \ \sum_{n=1}^\infty \frac{\lambda_f(n)}{n^s}e^{-\frac{n}{X}} +  \varepsilon_{_f} \psi_{\!_f}(s)\sum_{n\leq X}\frac{\lambda_{\bar{f}}(n)}{n^{1-s}}
\\
& \quad \quad- \frac{1}{2\pi i} \int_{-\frac{3}{4}-i\infty}^{-\frac{3}{4}+i\infty}   \varepsilon_{_f}\psi_{\!_f}(s\!+\!w)\sum_{n>X} \frac{\lambda_{\bar{f}}(n)}{n^{1-s-w}}  \Gamma(w) X^w\ \!  dw
\\
& \quad \quad - \frac{1}{2\pi i} \int_{\frac{1}{4}-i\infty}^{\frac{1}{4}+i\infty}  \varepsilon_{_f}\psi_{\!_f}(s\!+\!w)\sum_{n\leq X} \frac{\lambda_{\bar{f}}(n)}{n^{1-s-w}} \Gamma(w) X^w \ \!  dw.
\end{split}\end{equation}
Differentiating both sides of this expression, we deduce that
\begin{equation}\label{L_prime}
\begin{split}
L'(s,f) &= \sum_{n=1}^\infty \frac{\alpha_f(n)}{n^s}e^{-\frac{n}{X}} + \varepsilon_{_f} \psi_{\!_f}(s) \left\{\frac{\psi_{\!_f}'}{\psi_{\!_f}}(s)\sum_{n\leq X}\frac{\lambda_{\bar{f}}(n)}{n^{1-s}}+\sum_{n\leq X} \frac{\lambda_{\bar{f}}(n) \log n }{n^{1-s}} \right\}
\\
& \quad \quad - \frac{1}{2\pi i} \int_{-\frac{3}{4}-i\infty}^{-\frac{3}{4}+i\infty} \Gamma(w) X^w \frac{d}{ds} \Bigg[ \varepsilon_{_f}  \psi_{\!_f}(s\!+\!w)\sum_{n>X} \frac{\lambda_{\bar{f}}(n)}{n^{1-s-w}}\Bigg] \ \! dw
\\
& \quad\quad - \frac{1}{2\pi i} \int_{\frac{1}{4}-i\infty}^{\frac{1}{4}+i\infty} \Gamma(w) X^w \frac{d}{ds} \Bigg[ \varepsilon_{_f}  \psi_{\!_f}(s\!+\!w)\sum_{n\leq X} \frac{\lambda_{\bar{f}}(n)}{n^{1-s-w}}\Bigg] \ \! dw
\end{split}
\end{equation}
where $\alpha_f(n)=-\lambda_f(n)\log n$.  Logarithmically differentiating \eqref{eq:psi}, we find that 
\[
     \frac{\psi_{\!_f}'}{\psi_{\!_f}}(s) = -2 \log ( \tfrac{\sqrt{q}}{2 \pi} ) - \frac{\Gamma'}{\Gamma}(1-s+ \tfrac{k-1}{2})
         - \frac{\Gamma'}{\Gamma}(s+ \tfrac{k-1}{2}). 
\]
Since $\frac{\Gamma'}{\Gamma}(z) = \log|z| + O(1)$ as long as  $|z| \ge \varepsilon$ and $|\arg(z)| \le \pi - \varepsilon$ for any fixed $\varepsilon>0$, it follows that
\[
\label{eq:psiprimepsistirling}
 \frac{\psi_{\!_f}'}{\psi_{\!_f}}\left(\sigma+\!it\right) = - \log X^2 + O(1)
\]
uniformly for $-1 \le \sigma \le 1$, $T\leq t \leq 2T$, and $T\geq 10$.
This implies that
\begin{equation}
   \label{eq:2ndtermidentity}
   \frac{\psi_{\!_f}'}{\psi_{\!_f}}(s)\sum_{n\leq X}\frac{\lambda_{\bar{f}}(n)}{n^{1-s}}+\sum_{n\leq X} \frac{\lambda_{\bar{f}}(n) \log n }{n^{1-s}}
   = - \sum_{n \leq X}  \frac{\lambda_{\bar f}(n)\log(X^2/n)}{n^{1-s}}
   + O\Bigg( \bigg| \sum_{n \leq X}  \frac{\lambda_{\bar f}(n)}{n^{1-s}} \bigg| \Bigg)
\end{equation}
for $s=\frac{1}{2}+it$, $T\leq t \leq 2T$, and $T\geq 10$.
Now, observing that $|\varepsilon_{_f} \psi_{\!_f}(\frac{1}{2}+it)|=1$, the lemma follows from \eqref{L_prime} and \eqref{eq:2ndtermidentity} since
\[
 \Bigg|\sum_{n\leq X}\frac{\lambda_{\bar{f}}(n)}{n^{1-s}} \Bigg| = \Bigg|\sum_{n\leq X}\frac{\lambda_f(n)}{n^{s}} \Bigg|
 \]
for $s=\frac{1}{2}+it$.
\end{proof}

\smallskip

The final lemma in this section is  an inequality for $\log|L(\sigma+it,f)|$ which holds uniformly for $\sigma$ in the interval $[\frac{1}{2},\frac{3}{4}]$ and does not involve the zeros of $L(s,f)$. This inequality is essential in the proof of Theorem \ref{shifted}. To state this result, we let $\mu_0=0.5671\ldots$ be the unique positive real number satisfying $e^{-\mu_0} = \mu_0$ and put $\sigma_\mu=\sigma_{\mu,x} = 1/2+\mu/\log x$. Then the following inequality holds.

\begin{lemma}  \label{1st_inequality}
Let $f\in H_k(q,\chi)$ and assume $\text{RH}_f$. Suppose $T$ is large, let $t \in [T,2T]$, and let $3\le x\le T^2$. Then, for all $\mu_0\le \mu \le \frac{1}{4}\log x$, the estimate
\[
\log|L(\sigma\!+\!it,f)| \le \Re \sum_{n\le x} \frac{\Lambda_f(n)}{n^{\sigma_\mu+it}\log n} \frac{\log(x/n)}{\log x} + \big(1+\mu\big) \frac{\log T}{\log x} + O(1)
\]
holds uniformly for $\frac{1}{2}\le \sigma \leq \sigma_\mu \le \frac{3}{4}.$
\end{lemma}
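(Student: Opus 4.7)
The proof I would give follows the framework introduced by Soundararajan \cite{S} and adapted to modular $L$-functions in \cite{M}, with two components: a ``free-$\sigma$'' reduction to the fixed shift $\sigma_\mu$, followed by a Dirichlet polynomial upper bound for $\log|L(\sigma_\mu+it,f)|$ coming from contour integration.

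For the reduction step, I would start from the identity
\[
\log|L(\sigma+it,f)| - \log|L(\sigma_\mu+it,f)| = -\int_\sigma^{\sigma_\mu}\Re\frac{L'}{L}(u+it,f)\,du.
\]
The Hadamard factorization of the completed $L$-function $\Lambda(s,f)=q^{s/2}(2\pi)^{-s}\Gamma(s+\tfrac{k-1}{2})L(s,f)$ together with Stirling's formula gives, uniformly for $\tfrac12\le u\le\tfrac34$ and $T\le t\le 2T$,
\[
\Re\frac{L'}{L}(u+it,f) = \sum_\gamma\frac{u-\tfrac12}{(u-\tfrac12)^2+(t-\gamma)^2} -\log X + O(1),
\]
where $X=\sqrt{q}T/(2\pi)$. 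Under $\mathrm{RH}_f$ every term in the sum over $\gamma$ is non-negative for $u\ge\tfrac12$, so $\Re\tfrac{L'}{L}(u+it,f)\ge -\log T+O(1)$. Integrating over $[\sigma,\sigma_\mu]$, an interval of length at most $\mu/\log x$, yields
\[
\log|L(\sigma+it,f)|\le \log|L(\sigma_\mu+it,f)| + \frac{\mu\log T}{\log x} + O(1),
\]
which accounts for the $\mu\log T/\log x$ part of the error in the lemma.

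It then remains to prove the inequality $\log|L(\sigma_\mu+it,f)|\le \Re\sum_{n\le x}\frac{\Lambda_f(n)}{n^{\sigma_\mu+it}\log n}\frac{\log(x/n)}{\log x} + \frac{\log T}{\log x}+O(1)$. For this I would apply Mellin inversion to the absolutely convergent identity $\log L(s+w,f)=\sum_n\Lambda_f(n)/(n^{s+w}\log n)$ (valid for $\Re(s+w)>1$) to obtain, at $s=\sigma_\mu+it$ and $c>1-\sigma_\mu$,
\[
\sum_{n\le x}\frac{\Lambda_f(n)}{n^{s}\log n}\frac{\log(x/n)}{\log x} = \frac{1}{2\pi i\log x}\int_{(c)}\log L(s+w,f)\frac{x^w}{w^2}\,dw.
\]
Shifting the contour leftward past the double pole at $w=0$ produces the residue $\log L(s,f)+\frac{1}{\log x}\tfrac{L'}{L}(s,f)$. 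Under $\mathrm{RH}_f$ the branch-type singularities of $\log L(s+w,f)$ all sit on the line $\Re w=-\mu/\log x$; pushing the contour further left across these singularities produces, after taking real parts, a sum $-\frac{1}{\log x}\sum_\gamma\mathcal{K}_\mu\!\bigl((t-\gamma)\log x\bigr)$ where the kernel $\mathcal{K}_\mu$ is an explicit elementary function of $\mu$ and $\tau$. The residual integral on the shifted line can be bounded via the functional equation \eqref{eq:func_eqn} and Stirling's formula, contributing at most $\log T/\log x + O(1)$. Solving for $\log|L(\sigma_\mu+it,f)|$ and absorbing $-\tfrac{1}{\log x}\Re\tfrac{L'}{L}(\sigma_\mu+it,f)$ via the same lower bound $\Re\tfrac{L'}{L}\ge -\log T+O(1)$ from Step~1 (with a factor $1/\log x$, so this contributes only $O(1)$) gives the required estimate.

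The main obstacle is the zero-kernel analysis: one needs $\mathcal{K}_\mu(\tau)\ge 0$ for every real $\tau$ so that the zero-sum contributes with a favorable sign to the upper bound. A routine calculation reduces this positivity to an elementary inequality in one variable whose sign change occurs precisely at the Soundararajan threshold $e^{-\mu_0}=\mu_0$; this is exactly why the hypothesis $\mu\ge\mu_0$ appears. Combining the reduction in the first step with this Dirichlet polynomial bound, and noting $\sigma_\mu\le\tfrac34$ forces $\mu\le\tfrac14\log x$, produces the full inequality with constant $(1+\mu)$ in front of $\log T/\log x$.
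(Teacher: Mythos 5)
Your first step (reducing from $\sigma$ to $\sigma_\mu$ by integrating $-\Re\frac{L'}{L}(u+it,f)\le \log T+O(1)$ over $[\sigma,\sigma_\mu]$) is exactly the paper's argument and is fine. The gap is in the second step. If you actually carry out the contour shift of $\frac{1}{2\pi i\,\log x}\int_{(c)}\log L(s+w,f)\,x^w w^{-2}\,dw$ (or, equivalently and with no branch-cut issues, integrate Chandee's identity for $-\frac{L'}{L}$ in $\sigma$, which is what the paper does), the contribution of a zero $\rho_{\!_f}=\tfrac12+i\gamma$ to the resulting formula for $\log|L(\sigma_\mu+it,f)|$ is $\frac{1}{\log x}\Re\int_{-\infty}^{1/2-\sigma_\mu}\frac{x^{u+i(\gamma-t)}}{(u+i(\gamma-t))^2}\,du$. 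At $\gamma=t$ this equals $\int_{\mu}^{\infty}e^{-a}a^{-2}\,da>0$, so the zero kernel has the \emph{unfavorable} sign precisely for zeros nearest to $t$ --- and in the application $t$ is itself a zero ordinate. Hence your claimed pointwise positivity $\mathcal{K}_\mu(\tau)\ge 0$ is false, and the zero sum cannot be discarded by a sign argument alone. The threshold $\mu_0$ does not arise from positivity of this kernel; it arises from playing the zero sum off against the Poisson-kernel sum $F(\sigma_\mu+it)=\sum_{\gamma}\frac{\sigma_\mu-1/2}{(\sigma_\mu-1/2)^2+(t-\gamma)^2}$ hidden inside $\Re\frac{L'}{L}(\sigma_\mu+it,f)$: one keeps $-\Re\frac{L'}{L}(\sigma_\mu+it,f)\le \log T-F(\sigma_\mu+it)+O(1)$, bounds the zero sum by $\frac{x^{1/2-\sigma_\mu}}{(\sigma_\mu-1/2)\log x}\,F(\sigma_\mu+it)=\frac{e^{-\mu}}{\mu}\,F(\sigma_\mu+it)$ (up to the factor $\frac1{\log x}$), and notes that the combined coefficient of $F$ is nonpositive exactly when $e^{-\mu}\le\mu$, i.e. $\mu\ge\mu_0$. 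Your proposal explicitly throws $F$ away by invoking the crude bound $\Re\frac{L'}{L}\ge-\log T+O(1)$, which destroys the one cancellation that makes the lemma true.

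There are also two bookkeeping problems. First, $-\frac{1}{\log x}\Re\frac{L'}{L}(\sigma_\mu+it,f)$ is not $O(1)$: the crude bound gives $\frac{\log T}{\log x}+O\big(\frac1{\log x}\big)$, and $\frac{\log T}{\log x}$ is unbounded in the relevant range (in the proof of Theorem \ref{shifted} one takes $x$ as small as a fixed power of $T^{1/V}$ with $V$ nearly $\log T/\log\log T$). Counting that term together with your residual shifted-line integral (claimed $\le\frac{\log T}{\log x}+O(1)$) and the $\mu\frac{\log T}{\log x}$ from the first step would give at best $(2+\mu)\frac{\log T}{\log x}$, not the stated $(1+\mu)\frac{\log T}{\log x}$. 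Second, shifting a contour through the logarithmic branch points of $\log L(s+w,f)$ requires a genuine branch-cut analysis (and a bound for $\log|L|$ on the shifted line that is dangerously close to what you are trying to prove); the paper's route via Chandee's identity is a residue computation for the meromorphic function $\frac{L'}{L}$, followed by an integration in $\sigma$, and after the dust settles its zero terms are exactly the cut integrals you describe --- so working with $\log L$ directly buys nothing and costs rigor.
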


\begin{proof} 
This lemma, which is inspired by the main proposition in Soundararajan's paper \cite{S}, can be established in a similar manner to Lemma 2.1 of \cite{M}. For $s$ not coinciding with a zero of $L(s,f)$, let
\[
F(s) = \Re \sum_{\rho_{\!_f}} \frac{1}{s\!-\!\rho_{\!_f}} =\sum_{\gamma_{\!_f}} \frac{\sigma\!-\!1/2}{(\sigma\!-\!1/2)^2\!+\!(t\!-\!\gamma_{\!_f})^2}.  
\]
Since $F(s) \ge 0$ if $\sigma \ge 1/2$, it follows from Hadamard's factorization formula for $L(s,f)$ (cf. Theorem 5.6 and equation (5.86) of \cite{IK}) and Stirling's formula for $\Gamma'(s)/\Gamma(s)$ that
\begin{equation}\label{log1}
\begin{split}
 - \Re \frac{L'}{L}(s,f) &= \frac{1}{2} \log \frac{q}{\pi^2} + \frac{1}{2} \Re \frac{\Gamma'}{\Gamma}\Big(\tfrac{s}{2}\!+\!\tfrac{(k-1)}{4} \Big) + \frac{1}{2} \Re \frac{\Gamma'}{\Gamma}\Big(\tfrac{s}{2}\!+\!\tfrac{(k+1)}{4} \Big) - F(s)
 \\
 &\leq \log T - F(s) + O(1)
 \\
 &\leq \log T + O(1)
\end{split}
\end{equation}
uniformly for $T\le t \leq 2T$, $\frac{1}{2} \leq \sigma \le \frac{3}{4}$, and $T$ sufficiently large. Consequently, the inequality 
\begin{equation} \label{log2}
\begin{split}
\log|L(\sigma\!+\!it,f)| - \log|L(\sigma_\mu\!+\!it,f)| &= \int_\sigma^{\sigma_\mu} \left[- \Re \frac{L'}{L}(u\!+\!it,f)\right] \ \!du
\\
&\leq (\sigma_\mu-\sigma) \big(\log T+ O(1)\big)
\\
&\leq \big(\sigma_\mu-\tfrac{1}{2}\big) \big(\log T+ O(1)\big)
\\
&\leq \frac{\mu \log T}{\log x}+ O( 1 )
\end{split}
\end{equation}
holds uniformly for $\frac{1}{2}\le \sigma \leq \sigma_\mu \le \frac{3}{4}$ and $T\le t \le 2T$. Note that the assumption $\mu \le \frac{1}{4} \log x$ implies that $\sigma_\mu \le \frac{3}{4}$. In order to complete the proof of the lemma, we require an upper bound for  $\log|L(\sigma_\mu\!+\!it,f)|$. We deduce such a bound from the following identity of Chandee (Lemma 2.4 of \cite{Chandee}) which states that
\begin{equation*} 
\begin{split}
-\frac{L'}{L}(s,f)  = \sum_{n\le x} & \frac{\Lambda_f(n)}{n^s} \frac{\log(x/n)}{\log x} + \frac{1}{\log x} \left(\frac{L'}{L}\right)'\!(s,f) 
\\
&+ \frac{1}{\log x} \sum_{\rho_{\!_f}} \frac{x^{\rho_{\!_f}-s}}{(\rho_{\!_f}\!-\!s)^2} + \frac{1}{\log x} \sum_{n=1}^\infty \frac{x^{-n-(k-1)/2-s}}{(n\!-\!(k\!-\!1)/2\!-\!s)^2}.
\end{split}
\end{equation*}
for $s$ not coinciding with a zero of $L(s,f)$. This is an analogue of a corresponding identity for the logarithmic derivative of the Riemann zeta-function which was proved by Soundararajan in \cite{S}. Integrating both sides of this expression over $\sigma$ from $\sigma_\mu$ to $\infty$ and using the assumption that $3 \le x \le T^2$, it follows that 
\begin{equation} \label{log3}
\begin{split}
 \log|L(\sigma_\mu\!+\!it,f)| = \Re \sum_{n\le x} &\frac{\Lambda_f(n)}{n^{\sigma_\mu + it} \log n} \frac{\log(x/n)}{\log x} + \frac{1}{\log x} \Re  \frac{L'}{L}(\sigma_\mu\!+\!it,f) 
 \\
 &+ \frac{1}{\log x} \sum_{\rho_{\!_f}} \Re \int_{\sigma_\mu}^\infty \frac{x^{\rho_{\!_f}-s}}{(\rho_{\!_f}\!-\!s)^2}\ \! d\sigma +  O\Big(\frac{1}{\log x}\Big).
\end{split}
\end{equation}
We now estimate the second and third terms on the right-hand side of this expression. By the second line in \eqref{log1}, it follows that
\[
 - \Re \frac{L'}{L}(\sigma_\mu\!+\!it,f)  \leq \log T - F(\sigma_\mu\!+\!it) + O(1)
\]
for $T\le t \le 2T$ and $\mu \le \frac{1}{4} \log x$. Using this inequality, and observing that
\begin{equation*}
\begin{split}
\sum_{\rho_{\!_f}} \Big| \int_{\sigma_\mu}^\infty \frac{x^{\rho_{\!_f}-s}}{(\rho_{\!_f}\!-\!s)^2}\ \! d\sigma \Big| \leq \sum_{\rho_{\!_f}} \int_{\sigma_\mu}^\infty \frac{x^{1/2-\sigma}}{|\rho_{\!_f}\!-\!s|^2}\ \! d\sigma \leq  \sum_{\rho_{\!_f}} \frac{x^{1/2-\sigma_\mu}}{|\rho_{\!_f}\!-\!\sigma_\mu\!-\!it|^2\log x} = \frac{x^{1/2-\sigma_\mu}F(\sigma_\mu\!+\!it) }{(\sigma_\mu\!-\!1/2) \log x } 
\end{split}
\end{equation*}
we deduce from \eqref{log3} that
\begin{equation*} 
\begin{split}
\log|L(\sigma_\mu\!+\!it,f)| \le \Re \sum_{n\le x} & \frac{\Lambda_f(n)}{n^{\sigma_\mu + it} \log n} \frac{\log(x/n)}{\log x} + \frac{\log T}{\log x} 
\\
&+ \frac{F(\sigma_\mu\!+\!it)}{\log x}\left( \frac{x^{1/2-\sigma_\mu}}{(\sigma_\mu\!-\!1/2) \log x } - 1 \right) + O\Big(\frac{1}{\log x}\Big).
\end{split}
\end{equation*}
Adding this inequality to \eqref{log2}, we obtain that
\begin{equation*}
\begin{split}
\log|L(\sigma\!+\!it,f)| \le \Re \sum_{n\le x} &\frac{\Lambda_f(n)}{n^{\sigma_\mu + it} \log n} \frac{\log(x/n)}{\log x} + \big(1+\mu\big)\frac{\log T}{\log x} 
\\
&+ \frac{F(\sigma_\mu\!+\!it)}{\log x}\left( \frac{x^{1/2-\sigma_\mu}}{(\sigma_\mu\!-\!1/2) \log x } - 1 \right) + O(1).
\end{split}
\end{equation*}
If $\mu \ge \mu_0$, then the third term on the right-hand side involving $F(\sigma_\mu\!+\!it)$ is less than or equal to zero, and hence omitting it does not change the inequality. The lemma now follows.
\end{proof}

\noindent{\sc Remark.} Since the coefficients $\Lambda_f(n)$ are supported on prime powers and satisfy  $|\Lambda_f(n)| \le 2 \Lambda(n)$, it follows from the previous lemma that 
\begin{equation}\label{prime_inequality}
\begin{split}
\log|L(\sigma\!+\!it,f)| \ \le \ \Re \sum_{p \le x} &\frac{\lambda_f(p)}{p^{\sigma_\mu+it}} \frac{\log(x/p)}{\log x} + \Re \sum_{p \le \sqrt{x} } \frac{(\lambda_f(p^2)\!-\!\chi(p))}{p^{2\sigma_\mu+2it}} \frac{\log(\sqrt{x}/p)}{\log\sqrt{x}} 
\\
&\quad + \big(1+\mu\big) \frac{\log T}{\log x} + O(1)
\end{split}
\end{equation}
holds uniformly for sufficiently large $T$ when $T \le t \le 2T$, $3\le x\le T^2$, $\frac{1}{2}\le \sigma \leq \sigma_\mu \le \frac{3}{4}$,
and $\mu \ge \mu_0$. Here we have used the identities $\Lambda_f(p) = \lambda_f(p) \log p$ and $\Lambda_f(p^2) = (\lambda_f(p^2)\!-\!\chi(p)) \log p$ which, as mentioned in \textsection 2, hold for all primes $p$. Note that
\[
\big| \lambda_f(p^2)\!-\!\chi(p) \big|^2 = \left|\frac{\Lambda_f(p^2)}{\log p}\right|^2 \leq \left|\frac{2 \Lambda(p^2)}{\log p}\right|^2 = \left|\frac{2 \Lambda(p)}{\log p}\right|^2=4.
\]
This bound is applied in \textsection 7 during the proof of a lemma which is used to establish Theorem \ref{shifted}.

\section{Some mean-value estimates}

In this section, we state and prove three propositions which are used to establish Proposition \ref{mainterms}, Proposition \ref{errordmv}, and Theorem \ref{shifted}, respectively. In order to state the first proposition, we introduce some notation. 
Let $T >0$ and let $A(s)$ be a Dirichlet polynomial defined by 
\begin{equation}
 \label{eq:dirichletpolynomial}
  A(s) = \sum_{n \le Y} \frac{a(n)}{n^s}
\end{equation}
where the coefficients $a(n)$ are complex numbers and let $\overline{A}(s) = \overline{A(\bar{s})}$
where $Y \asymp T$.  We assume the coefficients $a(n)$ satisfy the following two conditions:  there exists a $\eta$ satisfying $0<\eta \leq \frac{1}{2}$ such that
\begin{equation}
  \label{eq:coefficientscondition1}
   \sum_{n \le x} |a(n)|   \ll x(\log xT) (\log x)^{-\eta}
\end{equation}
and that
\begin{equation}
 \label{eq:coefficientscondition2}
   \sum_{n \le x} |a(n)|^2   \ll x (\log xT)^2
\end{equation}
uniformly for  $ x \ge 1$. These assumptions are made so that we can simultaneously handle sums involving $\alpha_f(n)$ and $\beta_{f,X}(n)$.

\smallskip

\begin{proposition} \label{dmva}
Assume $\text{RH}_{f}$. Let $T >0$, $X=\frac{\sqrt{q} T}{2 \pi}$, $Y \asymp T$,  and let $A(s)$ be a Dirichlet polynomial as defined in \eqref{eq:dirichletpolynomial} with coefficients $a(n)$
satisfying \eqref{eq:coefficientscondition1} and  \eqref{eq:coefficientscondition2}. 
 Then 
\begin{equation}
\begin{split}
  \label{eq:discretemeanformula}
  \sum_{T < \gamma_{\!_f} \leq 2T} |A(\rho_{\!_f})|^2  & = \frac{T}{\pi} 
  \left( 
  \log X   \sum_{n \le Y } \frac{|a(n)|^2}{n}  -   \Re\Bigg(  \sum_{n \le Y}  \frac{(\Lambda_f* a)(n) \overline{a(n)}}{n}  \Bigg)
  \right) \\
  & \quad \quad+ O\!\left( 
   T (\log T)^{4-2\eta} + T\log T \sqrt{ \sum_{n=1}^{\infty} \frac{|(\Lambda_f*a)(n)|^2}{n^{1+\frac{1}{\log T}}} } \ \!   \right).
\end{split}  
\end{equation}
\end{proposition}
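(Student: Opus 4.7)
The plan is to evaluate the discrete mean via a contour integral and the functional equation. Since $\text{RH}_f$ gives $1-\rho_f=\overline{\rho_f}$, we have $|A(\rho_f)|^2=A(\rho_f)\overline{A}(1-\rho_f)$, so by Cauchy's residue theorem
$$
\sum_{T<\gamma_f\le 2T}|A(\rho_f)|^2 \;=\; \frac{1}{2\pi i}\oint_{\mathcal{R}} \frac{L'}{L}(s,f)\,A(s)\,\overline{A}(1-s)\,ds,
$$
where $\mathcal{R}$ is the positively oriented rectangle with corners $c+iT_1$, $c+iT_2$, $1{-}c+iT_2$, $1{-}c+iT_1$, with $c=1+1/\log T$, and $T_1\in[T,T+1]$, $T_2\in[2T,2T+1]$ chosen via Lemma \ref{LprimeLapprox} so that no zero of $L(s,f)$ lies on the horizontal segments. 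The horizontal edges contribute only to the error, bounded using the pointwise estimate $L'(s,f)/L(s,f)\ll\log^2 T$ from Lemma \ref{LprimeLapprox} together with the coefficient bounds \eqref{eq:coefficientscondition1}--\eqref{eq:coefficientscondition2}.

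On the right vertical edge $\Re(s)=c$, I would expand all three factors as Dirichlet series: $-L'(s,f)/L(s,f)=\sum_n\Lambda_f(n)n^{-s}$, $A(s)=\sum_{m\le Y}a(m)m^{-s}$, and $\overline{A}(1-s)=\sum_{k\le Y}\overline{a(k)}\,k^{s-1}$. Integrating term-by-term, the diagonal terms $mn=k$ contribute $-\frac{T}{2\pi}\sum_{N\le Y}(\Lambda_f*a)(N)\overline{a(N)}/N$, while the off-diagonal terms are controlled via Montgomery and Vaughan's mean-value theorem applied to the Dirichlet polynomial $\sum_N(\Lambda_f*a)(N)N^{-s}$ (after truncating the infinite $L'/L$ series at a suitable height $T^A$ and absorbing the tail into the error), yielding the square-root error term in \eqref{eq:discretemeanformula}.

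On the left vertical edge $\Re(s)=1-c$, I would invoke the functional equation in the logarithmically-differentiated form $L'/L(s,f)=\psi_f'/\psi_f(s)-L'/L(1-s,\bar f)$. The piece involving $\psi_f'/\psi_f(s)=-2\log X+O(1)$ (Stirling's formula, as in the proof of Lemma \ref{approx}) produces, after diagonal analysis of $A(s)\overline{A}(1-s)$, the contribution $+\frac{T\log X}{\pi}\sum_{n\le Y}|a(n)|^2/n$. The piece involving $L'/L(1-s,\bar f)$, after the change of variables $s\mapsto 1-s$, becomes a contour integral of the conjugate data on the right line whose diagonal yields $-\frac{T}{2\pi}\overline{\sum_N(\Lambda_f*a)(N)\overline{a(N)}/N}$. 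Combining the three diagonal contributions produces the stated main term $\frac{T}{\pi}\bigl(\log X\sum|a(n)|^2/n-\Re\sum(\Lambda_f*a)(n)\overline{a(n)}/n\bigr)$.

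The main obstacle will be obtaining the sharp error $O(T(\log T)^{4-2\eta})$: both the horizontal-edge contributions and the off-diagonal contributions from the vertical edges must exploit the savings in hypothesis \eqref{eq:coefficientscondition1} on $\sum_{n\le x}|a(n)|$ rather than just the $\ell^2$-bound \eqref{eq:coefficientscondition2}, via careful Dirichlet-polynomial estimates. Truncating the infinite Dirichlet series for $L'/L$ at a sufficiently large height and controlling its tail along the line $\Re(s)=c$ is also necessary to legitimize the Montgomery-Vaughan application without spoiling this error bound.
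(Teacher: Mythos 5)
Your overall architecture is the same as the paper's: write $|A(\rho_{\!_f})|^2=A(\rho_{\!_f})\overline{A}(1-\rho_{\!_f})$ using $\text{RH}_f$, integrate $\frac{L'}{L}(s,f)A(s)\overline{A}(1-s)$ around the rectangle with well-chosen horizontal heights, extract the diagonal on the right edge via Montgomery--Vaughan, and use the functional equation on the left edge so that the $\psi_{\!_f}'/\psi_{\!_f}$ piece produces the $\frac{T}{\pi}\log X\sum|a(n)|^2/n$ term and the reflected $L'/L$ piece produces $-\overline{I_1}$. However, as written your treatment of the horizontal edges does not reach the claimed error. Bounding $L'/L$ pointwise by $O(\log^2 T)$ (the unconditional estimate \eqref{log_deriv_seq}) and the integrand by the sup bound $\max_\sigma|A(\sigma+i\tau)\overline{A}(1-\sigma-i\tau)|\ll T(\log T)^{3-2\eta}$ coming from \eqref{eq:coefficientscondition1} gives only $O\big(T(\log T)^{5-2\eta}\big)$, a full factor of $\log T$ worse than the stated $O\big(T(\log T)^{4-2\eta}\big)$; since the main term in the intended application is $\asymp T\log^4 T$ and $5-2\eta>4$, this loss is fatal, not cosmetic. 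The missing ingredient is the \emph{conditional integrated} bound \eqref{log_deriv_int} of Lemma \ref{LprimeLapprox}, namely $\int_{-1}^{2}|\tfrac{L'}{L}(\sigma+i\tau,f)|\,d\sigma\ll\log T$ on $\text{RH}_f$, which is exactly what the paper pairs with the sup bound on $A\overline{A}$ to get $T(\log T)^{4-2\eta}$; no refinement of the coefficient hypotheses can recover this factor, since the loss comes from $L'/L$, not from $A$.

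A second, more minor bookkeeping point concerns the right edge. You propose to truncate the $L'/L$ series at height $T^{A}$ and apply Montgomery--Vaughan to the product; but the resulting coefficients are the \emph{truncated} convolution $\sum_{jm=N,\,m\le Y,\,j\le T^{A}}\Lambda_f(j)a(m)$, which agrees with $(\Lambda_f*a)(N)$ only for $N\le Y$, and for $N>Y$ its mean square is not obviously dominated by the quantity $\sum_n|(\Lambda_f*a)(n)|^2 n^{-1-1/\log T}$ appearing in the statement (replacing it by absolute values destroys the ratio-of-$L$-functions structure that Lemma \ref{pole} later exploits). The paper instead completes $A(c+it)$ to its full Dirichlet series, so that $-\tfrac{L'}{L}\cdot A$ has coefficients exactly $(\Lambda_f*a)(N)$ (and Tsang's form of Montgomery--Vaughan applies to the infinite series directly, with no truncation), and then estimates the leftover tail piece $\sum_{m>Y}a(m)m^{-s}$ times $\tfrac{L'}{L}(s,f)\overline{A}(1-s)$ by direct term-by-term integration, using that $\Lambda_f$ is supported on prime powers so all frequencies $km>2Y$ stay a fixed multiplicative distance from the frequencies $n\le Y$; this yields $O(T(\log T)^{4-2\eta})$. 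You will need this completion-plus-tail maneuver (or an equivalent) to land on the error term exactly as stated.
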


\smallskip

This next simple proposition is used to prove Proposition \ref{errordmv}, showing that the mean-square of the error term $\mathcal{E}_f(T)$ in equation \eqref{eq:Ef} is smaller on average than the sums $\mathcal{A}_f(T)$ and $\mathcal{B}_f(T)$
in \eqref{eq:AfBf} by a factor of $\log \log T$. Although it is possible to use Proposition \ref{dmva} to prove Proposition \ref{errordmv}, it is considerably simpler to instead apply Proposition \ref{dmvbd}. 

\begin{proposition}  \label{dmvbd}
Assume $\text{RH}_{f}$. Let $B(t) = \sum_{n=1}^{\infty} b(n) n^{-it}$ be a Dirichlet polynomial with complex coefficients $b(n)$.  Then 
\begin{equation}
 \label{eq:dmvbd2} 
  \sum_{T < \gamma_{\!_f} \le 2T} |B(\gamma_{\!_f})|^2  \ll
    \frac{\log T}{\log  \log T} \Big( \sqrt{S_1 S_2} + S_{3}^2 \Big) +S_1   \log T 
\end{equation}
where 
\begin{align*}
  S_1 = \sum_{n=1}^{\infty} |b(n)|^2   \ \! (T + n ) \ \!  , \quad 
  S_2 = \sum_{n=1}^{\infty} |b(n) |^2  (T + n) \log^2 n, \quad \text{and} \quad 
  S_3 = \sum_{ n=1}^{\infty} |b(n)|. 
\end{align*}
\end{proposition}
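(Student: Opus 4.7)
The plan is to rewrite the sum over zeros as a Riemann--Stieltjes integral against $N_{\!_f}(t)$, insert the decomposition $N_{\!_f}(t)=\vartheta_{\!_f}(t)+S_{\!_f}(t)$ supplied by Lemma \ref{NfTSfT}, and estimate the smooth and oscillatory pieces separately, invoking Montgomery and Vaughan's mean value theorem for Dirichlet polynomials at the two points where a mean-square appears.

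First, I would write
\[
\sum_{T<\gamma_{\!_f}\le 2T} |B(\gamma_{\!_f})|^{2} \;=\; \int_{T}^{2T} |B(t)|^{2}\,dN_{\!_f}(t) \;=\; \int_{T}^{2T} |B(t)|^{2}\vartheta_{\!_f}'(t)\,dt \;+\; \int_{T}^{2T} |B(t)|^{2}\,dS_{\!_f}(t),
\]
using that $\vartheta_{\!_f}$ is continuously differentiable while $S_{\!_f}$ is of bounded variation (piecewise smooth with jumps at the ordinates of zeros of $L(s,f)$). Lemma \ref{NfTSfT} gives $\vartheta_{\!_f}'(t)\ll \log T$ uniformly on $[T,2T]$, so the first integral is $\ll \log T\int_T^{2T}|B(t)|^2\,dt\ll S_{1}\log T$ by Montgomery and Vaughan's mean value theorem; this produces the $S_{1}\log T$ term in \eqref{eq:dmvbd2}.

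For the Stieltjes integral against $S_{\!_f}$ I would integrate by parts:
\[
\int_{T}^{2T} |B(t)|^{2}\,dS_{\!_f}(t) \;=\; \Big[|B(t)|^{2}S_{\!_f}(t)\Big]_{T}^{2T} \;-\; \int_{T}^{2T} S_{\!_f}(t)\,\frac{d}{dt}|B(t)|^{2}\,dt.
\]
Under $\mathrm{RH}_{f}$, Lemma \ref{NfTSfT} gives the sharp estimate $S_{\!_f}(t)\ll \log T/\log\log T$. Combined with the trivial sup-norm bound $|B(t)|\le S_{3}$ coming from the triangle inequality, the boundary contribution is $O\!\big(S_{3}^{2}\log T/\log\log T\big)$, producing the $S_{3}^{2}$ term. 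For the remaining integral I would use $\big|\tfrac{d}{dt}|B(t)|^{2}\big|\le 2|B(t)||B'(t)|$, apply Cauchy--Schwarz to separate the $|B|$ and $|B'|$ factors, and apply Montgomery--Vaughan to the mean squares of both $B(t)$ and $B'(t)=-i\sum_{n}b(n)(\log n)\,n^{-it}$, which are controlled by $S_{1}$ and $S_{2}$ respectively; this yields the $(\log T/\log\log T)\sqrt{S_{1}S_{2}}$ contribution.

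Summing the three pieces reproduces \eqref{eq:dmvbd2}. There is no genuine analytic obstacle here: the only technical nuisance is justifying the decomposition $dN_{\!_f}=\vartheta_{\!_f}'(t)\,dt+dS_{\!_f}(t)$ and the integration-by-parts identity in the presence of jumps of $S_{\!_f}$, which is routine once the one-sided conventions at the discontinuities are fixed as in Lemma \ref{NfTSfT}. The GRH hypothesis enters the argument only through Selberg's sharp bound on $S_{\!_f}$, which is precisely what produces the $1/\log\log T$ savings in the first term of the estimate over what one would get from the trivial unconditional bound.
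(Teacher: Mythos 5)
Your proposal is correct and follows essentially the same route as the paper: decompose $dN_{\!_f}=\vartheta_{\!_f}'(t)\,dt+dS_{\!_f}(t)$, bound the smooth part by $S_1\log T$ via Montgomery--Vaughan, and handle the $S_{\!_f}$ part by integration by parts using Selberg's conditional bound $S_{\!_f}(t)\ll\log T/\log\log T$, the sup bound $|B(t)|\le S_3$, and Cauchy--Schwarz with Montgomery--Vaughan applied to $B$ and $B'$. The only (routine) detail the paper adds is shifting the endpoints to points $\tau_1,\tau_2$ not coinciding with any ordinate $\gamma_{\!_f}$ before writing the sum as a Stieltjes integral.
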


\smallskip

The third proposition of this section is a pair of mean-value estimates for high powers of powers of Dirichlet polynomials supported on the primes.
These estimates are used in the proof of Theorem \ref{shifted}.

\begin{proposition} \label{dmv:primes}
Let $f\in H_k(q,\chi)$ and assume $\text{RH}_f$. Let $T$ be large, let $2\leq x \leq T$, let $w$ be a complex number with $\Re(w) \ge 0$, and let $m$ be a natural number such that $x^m \le T^{2/3}$. Then, for any complex numbers $a(p)$, we have
\begin{equation} \label{1st_prime_sum}
  \sum_{0<\gamma_{\!_f}\leq T} \Bigg| \sum_{p\le x} \frac{a(p)}{p^{  \rho_{{\!_f}}+w}} \Bigg|^{2m} \ll m ! \ \! N_{\!_f}(T) \Bigg( \sum_{p \le x} \frac{|a(p)|^2}{p} \Bigg)^{\! m}.
  \end{equation}
and 
\begin{equation} \label{2nd_prime_sum}
  \sum_{0<\gamma_{\!_f}\leq T} \Bigg| \sum_{p\le \sqrt{x} } \frac{a(p)}{p^{  2 \rho_{{\!_f}}+w}} \Bigg|^{2m} \ll m ! \ \! N_{\!_f}(T) \Bigg( \sum_{p \le \sqrt{x}} \frac{|a(p)|^2}{p^2} \Bigg)^{\! m}
  \end{equation}
where the implied constants depend only on $f$. 
\end{proposition}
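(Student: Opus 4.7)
The plan is to expand the $2m$-th power, reduce the sum to one over integers with exactly $m$ prime factors, and then apply the Landau--Gonek explicit formula (Lemma \ref{gonek}), exploiting a combinatorial vanishing of the resulting off-diagonal main term.

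First, set $\alpha_m(n) := \sum_{p_1\cdots p_m = n,\, p_i \le x} a(p_1)\cdots a(p_m)$, so that
$$\Bigl(\sum_{p\le x}\tfrac{a(p)}{p^{\rho_f+w}}\Bigr)^m = \sum_n \tfrac{\alpha_m(n)}{n^{\rho_f+w}},$$
where $\alpha_m$ is supported on integers $n$ with exactly $m$ prime factors counted with multiplicity, hence on $n\le x^m \le T^{2/3}$. Using $\text{RH}_f$ to replace $\overline{\rho_f}$ by $1-\rho_f$, expanding the modulus-squared and summing over zeros produces
$$\sum_{0<\gamma_f\le T}\Bigl|\sum_{p\le x}\tfrac{a(p)}{p^{\rho_f+w}}\Bigr|^{2m} = \sum_{n,n'} \tfrac{\alpha_m(n)\overline{\alpha_m(n')}}{n^w(n')^{1+\bar w}}\sum_{0<\gamma_f\le T}(n'/n)^{\rho_f}.$$
The diagonal $n=n'$ contributes $N_f(T)\sum_n |\alpha_m(n)|^2/n^{1+2\Re(w)}$, which by the assumption $\Re(w)\ge 0$ is at most $N_f(T)\sum_n |\alpha_m(n)|^2/n$. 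Expanding the inner sum and observing (via unique factorization) that $p_1\cdots p_m = q_1\cdots q_m$ forces the $q_i$ to be a permutation of the $p_i$, of which there are at most $m!$, we obtain $\sum_n|\alpha_m(n)|^2/n \le m!(\sum_{p\le x}|a(p)|^2/p)^m$. This single estimate already matches the right-hand side of \eqref{1st_prime_sum}.

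It therefore remains to show that the off-diagonal $n\ne n'$ is absorbed by the diagonal. The crucial observation is that the main term $-\frac{T}{2\pi}\Lambda_f(n'/n)$ from Lemma \ref{gonek} vanishes identically on the off-diagonal: if $n'/n$ were a prime power $p^k$ with $k\ge 1$, then $\Omega(n') = \Omega(n) + k = m+k$, contradicting $\Omega(n')=m$. Only the three error terms in Lemma \ref{gonek} remain, to be summed against the weights $|\alpha_m(n)\alpha_m(n')|/(n^{\Re(w)}(n')^{1+\Re(w)})$. Here the constraint $x^m\le T^{2/3}$ becomes essential, bounding $n,n'\le T^{2/3}$ and ensuring that for $n\ne n'$ the nearest prime power $P$ to $y = n'/n$ satisfies $|P-y|\ge 1/\max(n,n')\ge T^{-2/3}$. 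The main technical obstacle is the ensuing off-diagonal bookkeeping---each of the three Gonek error terms must be summed and shown to be $O(N_f(T)(\sum|a(p)|^2/p)^m)$, with the term $O(\log y\,\min(T, y/\langle y\rangle))$ being the most delicate when $y = n'/n$ lies close to a prime power.

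The second inequality \eqref{2nd_prime_sum} follows by the identical argument applied to the Dirichlet polynomial $\sum_p a(p)/(p^2)^{\rho_f+w}$: after raising to the $m$-th power, the coefficients are supported on squares $N = (p_1\cdots p_m)^2 \le T^{2/3}$; the diagonal contribution is bounded by $m!\,N_f(T)(\sum_{p\le\sqrt{x}}|a(p)|^2/p^2)^m$ by the same multinomial argument; and the ratio $N'/N = (n'/n)^2$ is a prime power $p^k$ only if $k$ is even and $n'/n = p^{k/2}$, again forcing $n = n'$ by the $\Omega$-counting argument. Thus Gonek's main term vanishes on the off-diagonal and the error analysis proceeds exactly as before.
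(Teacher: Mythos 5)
Your overall route is the same as the paper's: expand the $m$-th power into coefficients supported on products of exactly $m$ primes $p\le x$ (your $\alpha_m(n)$ is the paper's $a_{m,x}(n)$), bound the diagonal by $m!\,N_{\!_f}(T)\big(\sum_{p\le x}|a(p)|^2/p\big)^m$ via the permutation count (this is precisely the inequality the paper quotes from Soundararajan), and treat the off-diagonal with the Landau--Gonek formula of Lemma \ref{gonek}. Your observation that the Gonek main term vanishes identically on the off-diagonal (if $n'/n$ were a prime power then $n'$ would have more than $m$ prime factors) is correct, and it is a mild simplification over the paper, which instead keeps this contribution as a sum $\Sigma_1$ and bounds it nontrivially.

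There is, however, a genuine gap where you defer the rest to ``off-diagonal bookkeeping.'' The error term $O\!\big(\log y\,\min(T,y/\langle y\rangle)\big)$ with $y=n'/n$ is not bookkeeping, and the bound you hint at, $\langle y\rangle\ge 1/\max(n,n')\ge T^{-2/3}$, does not suffice: it gives $\min(T,y/\langle y\rangle)\le n'\le T^{2/3}$, and summing $\log(n'/n)\,n'$ against the weights $|\alpha_m(n)\alpha_m(n')|/n'$ yields roughly $\log T\,\big(\sum_{n\le x^m}|\alpha_m(n)|\big)^2\ll T^{4/3}\log T\cdot m!\big(\sum_{p\le x}|a(p)|^2/p\big)^m$, which overshoots the target $N_{\!_f}(T)\big(\cdots\big)^m\asymp T\log T\big(\cdots\big)^m$ by a factor of order $T^{1/3}$. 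The paper closes this loss with a specific device: the asymmetric splitting $2|a_{m,x}(d)a_{m,x}(n)|\le\Delta|a_{m,x}(d)|^2+\Delta^{-1}|a_{m,x}(n)|^2$, writing $n=ad+b$ with $|b|\le d/2$ so that ratios near prime powers (where $\langle n/d\rangle=|b|/d$) are separated from the rest (where $\langle n/d\rangle\ge\tfrac12$), and then balancing the two resulting sums $\Sigma_{31}\ll\Delta T^{2/3}\log T(\cdots)^m$ and $\Sigma_{32}\ll\Delta^{-1}T^{4/3}\log T(\cdots)^m$ by choosing $\Delta=T^{1/3}$. The remaining Gonek error terms are indeed routine, but this step requires an idea your proposal does not supply, so as written the argument is incomplete exactly at its most delicate point (and the same issue recurs in your sketch of \eqref{2nd_prime_sum}).
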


The proofs of Proposition \ref{dmva} and Proposition \ref{dmvbd} rely on Montgomery and Vaughan's mean-value theorem for Dirichlet polynomials which we state in the following lemma.

\begin{lemma} \label{MontgomeryVaughan}
Let $\{a(n)\}$ and $\{b(n)\}$ be two sequences of complex numbers. Then, for any positive real numbers $T$ and $H$, we have 
\begin{equation}
  \label{eq:mv1}
   \int_{T}^{T+H}    \Bigg|   \sum_{n=1}^{\infty} \frac{a(n)}{n^{it}}  \Bigg|^2  dt = H \sum_{n=1}^{\infty} |a(n)|^2  + O\!\left( \sum_{n=1}^{\infty} n |a(n)|^2\right)
\end{equation}
and
\begin{equation}  \label{eq:mv2}
\begin{split}
   \int_{T}^{T+H}  
   \sum_{m=1}^{\infty}& \frac{a(m)}{m^{it}}   \sum_{n=1}^{\infty} \frac{b(n)}{n^{-it}} \ \!
    dt  = H \sum_{n=1}^{\infty} a(n) b(n) 
    + O\! \left( \sqrt{\sum_{n=1}^{\infty} n |a(n)|^2}   \sqrt{ \sum_{n=1}^{\infty} n |b(n)|^2 } \ \! \right).
\end{split}
\end{equation}
\end{lemma}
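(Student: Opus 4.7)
The plan is to prove both identities by expanding the integrand as a double Dirichlet series, integrating term by term, extracting the diagonal contribution, and then estimating the off-diagonal sum via the Montgomery–Vaughan Hilbertian inequality. This is the standard route for such mean-value results; the only delicate point is the bound for the off-diagonal terms.

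For \eqref{eq:mv1}, I would begin by writing
\[
\Bigg|\sum_{n=1}^{\infty} \frac{a(n)}{n^{it}}\Bigg|^2 \ = \sum_{m=1}^{\infty}\sum_{n=1}^{\infty} a(m)\overline{a(n)} \Big(\tfrac{n}{m}\Big)^{it}
\]
and integrating from $T$ to $T+H$. The diagonal terms $m=n$ contribute exactly $H\sum_n |a(n)|^2$, which is the main term. For the off-diagonal terms $m\ne n$, the elementary computation
\[
\int_T^{T+H} \Big(\tfrac{n}{m}\Big)^{it}\,dt \ = \ \frac{(n/m)^{iT}\bigl((n/m)^{iH}-1\bigr)}{i\log(n/m)}
\]
shows that the off-diagonal contribution is bounded in absolute value by $2\sum_{m\ne n} |a(m)a(n)|/|\log(m/n)|$. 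The key step is then to invoke the Montgomery–Vaughan Hilbert-type inequality
\[
\Bigg|\sum_{\substack{m,n\ge 1\\m\ne n}} \frac{a(m)\overline{a(n)}}{\log(m/n)}\Bigg| \ \ll \ \sum_{n=1}^{\infty} n\,|a(n)|^2,
\]
which is proved using the standard Hilbertian inequality with nodes $\lambda_n=\log n$ together with the lower bound $|\log(m/n)|\gg |m-n|/\max(m,n)$ controlling the minimum gap. Combining these estimates yields \eqref{eq:mv1}.

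For \eqref{eq:mv2}, I would follow essentially the same route: expanding the product
\[
\sum_{m=1}^{\infty}\frac{a(m)}{m^{it}}\sum_{n=1}^{\infty}\frac{b(n)}{n^{-it}} \ = \sum_{m=1}^{\infty}\sum_{n=1}^{\infty} a(m)b(n)(mn)^{-it}\cdot (mn)^{it}/m^{it}\cdot n^{it},
\]
i.e.\ as $\sum_{m,n} a(m)b(n)(n/m)^{it}$, and integrating term by term. The diagonal $m=n$ now yields $H\sum_n a(n)b(n)$, and the off-diagonal piece is handled by the bilinear version of the same Hilbert inequality,
\[
\Bigg|\sum_{\substack{m,n\ge 1\\m\ne n}} \frac{a(m)b(n)}{\log(m/n)}\Bigg| \ \ll \ \sqrt{\sum_{n} n|a(n)|^2}\,\sqrt{\sum_{n} n|b(n)|^2},
\]
which follows from the symmetric form of Montgomery–Vaughan combined with Cauchy–Schwarz. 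This produces the claimed error term.

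The entire argument reduces to the Montgomery–Vaughan Hilbertian inequality, and that is the main (and really only) obstacle. Its proof is by now standard (see Corollary 3 of Montgomery–Vaughan, \emph{Hilbert's inequality}, J.\ London Math.\ Soc.\ 1974), and once it is in hand both \eqref{eq:mv1} and \eqref{eq:mv2} are routine. Since the lemma is entirely classical, an alternative to reproducing the proof in detail would be to simply cite Corollary 3 of the Montgomery–Vaughan paper; I would include the short derivation above for self-containedness since it is essentially one page.
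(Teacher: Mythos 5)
The paper gives no proof of this lemma at all: it quotes \eqref{eq:mv1} as Corollary 3 of Montgomery--Vaughan and \eqref{eq:mv2} as Lemma 1 of Tsang (who deduced it from \eqref{eq:mv1}), so your closing suggestion to simply cite Corollary 3 is exactly what the authors do. Your self-contained sketch, however, contains a genuine misstep in the off-diagonal analysis for \eqref{eq:mv1}. After integrating termwise you bound the off-diagonal contribution in absolute value by $2\sum_{m\ne n}|a(m)a(n)|/|\log(m/n)|$ and only then invoke the Hilbert-type inequality. But the Montgomery--Vaughan inequality concerns the \emph{signed} form $\sum_{m\ne n}c_m\overline{c_n}/\log(m/n)$ and depends essentially on the antisymmetry of the kernel; it does not bound the absolute-value sum, and in fact the absolute-value sum can be larger than $\sum_n n|a(n)|^2$. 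For example, with $a(n)=1$ for $N<n\le 2N$ and $a(n)=0$ otherwise, one has $|\log(m/n)|\asymp|m-n|/N$ on the support, so $\sum_{m\ne n}|a(m)a(n)|/|\log(m/n)|\asymp N^2\log N$, whereas $\sum_n n|a(n)|^2\asymp N^2$; the step as written loses a logarithm and the claimed error term would not follow from it.

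The standard (and easy) repair is to keep the phases: the off-diagonal integral equals a difference of two bilinear forms $\sum_{m\ne n}c_m\overline{c_n}/(i\log(m/n))$, once with $c_n=a(n)n^{iT}$ and once with $c_n=a(n)n^{i(T+H)}$, and the generalized Hilbert inequality with nodes $\lambda_n=\log n$ and spacings $\delta_n\asymp 1/n$, applied to each form and using $|c_n|=|a(n)|$, gives the bound $O\bigl(\sum_n n|a(n)|^2\bigr)$. With that correction your argument for \eqref{eq:mv1} is the classical proof. Your treatment of \eqref{eq:mv2} via a bilinear Hilbert inequality is sound in principle (the bilinear bound follows from the Hermitian/quadratic case by a polarization or numerical-radius argument, since the coefficients enter only through their moduli), and is a legitimate alternative to the paper's route of citing Tsang's derivation of \eqref{eq:mv2} from \eqref{eq:mv1}.
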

\begin{proof}
Equation (\ref{eq:mv1}) is due to Montgomery and Vaughan \cite[Corollary 3]{MV}. 
 Tsang \cite[Lemma 1]{Ts} derived the estimate in \eqref{eq:mv2} from \eqref{eq:mv1}.
\end{proof}

\smallskip

We now prove Propositions \ref{dmva}, \ref{dmvbd}, and \ref{dmv:primes}.

\begin{proof}[Proof of Proposition \ref{dmva}]
We begin by replacing $T$ and $2T$ by two numbers $\tau_1$ and $\tau_2$ which are $\gg (\log T)^{-1}$ from 
any ordinate $\gamma_{\!_f}$ of a non-trivial zero of $L(s,f)$. This allows us to use the estimates in Lemma \ref{LprimeLapprox} when estimating the horizontal portions of a certain contour integral arising below. By condition \eqref{eq:coefficientscondition1}, altering $T$ (respectively $2T$) by an amount which is $O(1)$ introduces an error of 
\begin{equation*}
\begin{split}
  &\ll \sum_{T \le \gamma_{\!_f} \le T+1} |A(\rho_{\!_f})|^2 \ll \log T  \max_{T \le \tau \le T+1} |A(\tfrac{1}{2}+i \tau)|^2 \ll (\log T) \Bigg( \sum_{n \le Y} \frac{|a(n)|}{\sqrt{n}} \Bigg)^2
   \ll T(\log T)^{3-2 \eta}. 
\end{split}
\end{equation*}
Therefore
$$ \sum_{T < \gamma_{\!_f} \leq 2T} |A(\rho_{\!_f})|^2 =\sum_{\tau_1 < \gamma_{\!_f} \leq \tau_2} |A(\rho_{\!_f})|^2+O\left(T(\log T)^{3-2 \eta}\right). $$
Now let $\mathscr{C}$ be the positively oriented rectangular contour with vertices $c+i\tau_1, c+i\tau_2, 1-c+i\tau_2$, and $1-c+i\tau_1$
where $c=1+\frac{1}{2 \log T}$.  By the calculus of residues, we have 
\[
  \sum_{\tau_1 < \gamma_{\!_f} \leq \tau_2} |A(\rho_{\!_f})|^2 = \frac{1}{2 \pi i} \int_{\mathscr{C}} \frac{L'}{L}(s,f) A(s)\overline{A}(1-s) ds:=I_1+I_2+I_3+I_4,
\]
say, where $I_1$ and $I_3$ denote the integrals along the vertical portions of $\mathscr{C}$, and $I_{2}$ and $I_{4}$ denote the integrals along the horizontal portions of  $\mathscr{C}$. 

\smallskip

 The integrals $I_1$ and $I_3$ contribute to the main term while $I_2$ and $I_4$ contribute to the error term. 
In fact, we shall prove that
\begin{equation} \label{eq:I1}
 I_1  =  -\frac{T}{2 \pi} \sum_{n \le Y}  \frac{(\Lambda_f* a)(n) \overline{a(n)}}{n} + O\!\left(  T(\log T)^{4-2\eta} + T \log T
   \sqrt{ \sum_{m=1}^{\infty} \frac{|(\Lambda_f*a)(m)|^2}{m^{1+\frac{1}{\log T}}}} \ \!\right),
\end{equation}
\begin{equation} \label{eq:I3}
I_{3}    =  \frac{T}{\pi} \log X \sum_{n \le Y} \frac{|a(n)|^2}{n} +\overline{I_1} + O\Big(  T (\log T)^{4-2\eta} \Big),
\end{equation}
and that
\begin{equation} \label{eq:I2I4}
 |I_2| + |I_4|  = O\Big(  T (\log T)^{4-2\eta} \Big). 
\end{equation}
Combining these three expressions yields the proposition. 

\smallskip

We first estimate $I_2$ and $I_4$, the  horizontal portions of the contour. Letting $\tau$ denote either $\tau_1$ or $\tau_2$, we claim that 
\begin{equation*} 
   \left| \int_{c+i \tau}^{1-c+i \tau} \frac{L'}{L}(s,f) A(s) \bar{A}(1\!-\!s) ds \right| = O\Big(  T (\log T)^{4-2\eta} \Big)
\end{equation*}
which implies  \eqref{eq:I2I4}. To prove the claim, notice that since $\tau$ is a real number satisfying the conditions of Lemma \ref{LprimeLapprox}, it follows that
\begin{equation} \label{int_simplify}
\begin{split}
 \bigg| \int_{c+i \tau}^{1-c+i \tau} \frac{L'}{L}(s,f) & A(s) \bar{A}(1\!-\!s) ds \bigg| 
 \\
 & \leq  \left\{ \max_{1-c\leq \sigma \leq c} \big| A(\sigma\!+\!i\tau) \bar{A}(1\!-\!\sigma\!-\!i\tau) \big|  \right\} \int_{1-c}^{c} \left| \frac{L'}{L}(\sigma+i\tau,f) \right| \ \! d\sigma 
\\
 & \leq  \left\{ \max_{1-c\leq \sigma \leq c} \big| A(\sigma\!+\!i\tau) \bar{A}(1\!-\!\sigma\!-\!i\tau) \big|  \right\} \int_{-1}^{2} \left| \frac{L'}{L}(\sigma+i\tau,f) \right| \ \! d\sigma 
\\
&\ll  \log T \left\{ \max_{1-c \leq \sigma \leq c} \big| A(\sigma\!+\!i\tau) \bar{A}(1\!-\!\sigma\!-\!i\tau) \big| \right\}
 \end{split}
\end{equation}
upon applying the  estimate for the integral in \eqref{log_deriv_int}. By our assumption \eqref{eq:coefficientscondition1} and partial summation, for $Y\asymp T$, we see that
\begin{equation*}
 \sum_{n\leq Y} \frac{|a(n)|}{n^\sigma} \ll \left\{ \begin{array}{ll}
T^{1-\sigma} (\log T)^{1-\eta}, &\mbox{ if $1-c\leq \sigma \leq \frac{1}{2}$,} \\
T^{1-\sigma}  (\log T)^{2-\eta}, &\mbox{ if $\frac{1}{2}<\sigma \leq c$.}
       \end{array} \right.
 \end{equation*}
Thus, estimating trivially we see that
\begin{equation} \label{trivial_est}
\max_{1-c \leq \sigma \leq c} \big| A(\sigma\!+\!i\tau) \bar{A}(1\!-\!\sigma\!-\!i\tau) \big| \ll T (\log T)^{3-2\eta}.
\end{equation}
Inserting the bound in \eqref{trivial_est} into the last line of \eqref{int_simplify}, we establish our claim and thus \eqref{eq:I2I4}.

\smallskip

We now estimate $I_1$. After a change of variables, we have
\begin{align*}
  I_1
  & = \frac{1}{2 \pi} \int_{\tau_1}^{\tau_2}  \frac{L'}{L}(c+it,f) A(c+it) \overline{A}(1\!-\!c\!-\!it) dt.
\end{align*}
Using condition \eqref{eq:coefficientscondition1}, we can replace $A(c+it)$ by the difference of an absolutely convergent Dirichlet series and its tail. Setting
$$A(c+it) = \sum_{m=1}^{\infty} \frac{a(m)}{m^{c+it}} - \sum_{m > Y} \frac{a(m)}{m^{c+it}}$$ 
we can decompose $I_1$ as $I_1  = I_{1}' - I_{1}''$ where 
\begin{equation*}
\begin{split}
    I_{1}'   &   = - \frac{1}{2 \pi}  \int_{\tau_1}^{\tau_2} \sum_{m=1}^{\infty} \frac{(\Lambda_f*a)(m)}{m^{c+it}} \sum_{n \le Y} \frac{\overline{a(n)}}{n^{1-c-it}} \ \! dt 
 \end{split}
\end{equation*}
  and
  \begin{equation*}
\begin{split}
  I_{1}'' &   =  -\frac{1}{2 \pi} 
  \int_{\tau_1}^{\tau_2} \sum_{k=1}^{\infty} \frac{\Lambda_{f}(k)}{k^{c+it}} \sum_{m > Y} \frac{a(m)}{m^{c+it}}
  \sum_{n \le Y} \frac{\overline{a(n)}}{n^{1-c-it}}  \ \!  dt.
\end{split}
\end{equation*}
This trick allows us to write the main-term in the proposition using a convolution of two arithmetic functions, $\Lambda_f*a$, which is essential to the utility of this proposition in the proof of Theorem \ref{2nd_moment}. Applying Lemma \ref{MontgomeryVaughan}, it follows that
\begin{equation*}
  \label{eq:IR1}
 I_{1}' =  - \frac{(\tau_2\!-\!\tau_1)}{2 \pi}  \sum_{n \le Y}  \frac{(\Lambda_f* a)(n) \overline{a(n)}}{n} 
 + O\!\left( 
 \sqrt{  
 \sum_{m=1}^{\infty} \frac{|(\Lambda_f*a)(m)|^2}{m^{2c-1}}}
 \sqrt{
 \sum_{m \le Y} \frac{|a(m)|^2}{m^{1-2c}}}  \ \!  \right).
\end{equation*}
Now, by condition \eqref{eq:coefficientscondition2}, we have
\begin{equation*}
  \label{eq:am2bound1}
  \sum_{m \le Y} \frac{|a(m)|^2}{m^{1-2c}} \ll \sum_{m \le Y} m |a(m)|^2 \ll T^2 (\log T)^2
\end{equation*}
since $Y\asymp T$ and, by Cauchy's inequality, we have
\begin{equation*}
\begin{split}
 \left| \sum_{n \le Y}  \frac{(\Lambda_f* a)(n) \overline{a(n)}}{n} \right| &\leq \sqrt{ \sum_{n\leq Y} \frac{|a(n)|^2}{n^{1-\frac{1}{\log T}}} } \sqrt{\sum_{n\leq Y} \frac{|(\Lambda_f*a)(n)|^2}{n^{1+\frac{1}{\log T}}} }
 \\
 &\ll \big( \log Y )^{3/2} \sqrt{ \sum_{n=1}^\infty \frac{|(\Lambda_f*a)(n)|^2}{n^{1+\frac{1}{\log T}}} }
\end{split}
\end{equation*}
where the last estimate follows from another application of  \eqref{eq:coefficientscondition2} and partial summation. Therefore, recalling  $c=1+\frac{1}{2 \log T}$, we derive that
\begin{equation}
  \label{eq:I1prime}
I_1' =  - \frac{T}{2 \pi} \sum_{n \le Y}  \frac{(\Lambda_f* a)(n) \overline{a(n)}}{n} 
 + O\!\left( T\log T \sqrt{ \sum_{n=1}^\infty \frac{|(\Lambda_f*a)(n)|^2}{n^{1+\frac{1}{\log T}}} }\ \! \right)
\end{equation}
since $Y\asymp T$ and $\tau_2 - \tau_1 = T +O(1)$.

\smallskip

The second integral $I_1''$ can be estimated directly, without appealing to Lemma \ref{MontgomeryVaughan}. Interchanging the sums and the integral, as justified by the absolute convergence of the series for $L'(s,f)/L(s,f)$, we have
\begin{equation*}
\begin{split}
 I_1''&= -\sum_{k=1}^{\infty}\sum_{m>Y}\sum_{n\leq Y} \frac{\Lambda_{f}(k)a(m) \overline{a(n)}}{k^{c}m^{c}n^{1-c}} 
 \int_{\tau_1}^{\tau_2} \left(\frac{n}{k m}\right)^{it} dt
 \\
& \ll \sum_{k=1}^{\infty}\sum_{m>Y}\sum_{n\leq Y} \frac{\Lambda_{f}(k)|a(m)| |a(n)|}{k^{c}m^{c}n^{1-c} |\log(n/k m)|}
 \end{split}
 \end{equation*}
after integrating term-by-term. The key point here is that the arithmetic function $\Lambda_{f}(\cdot)$ is supported on prime powers. Thus, $k m > 2Y$  and $|\log(n/k m)| > \log 2$. Since $|\Lambda_{f}(n)|\leq 2\Lambda(n)$, it follows that 
\begin{equation*}
\begin{split}
 I_{1}'' \ll \sum_{k=1}^{\infty} \frac{\Lambda(k)}{ k^{c}}\sum_{m>Y}\frac{|a(m)|}{m^{c}}\sum_{n\leq Y} \frac{|a(n)|}{n^{1-c}} \ll
 -\frac{\zeta'}{\zeta}(c) \sum_{m>Y}\frac{|a(m)|}{m^{c}}\sum_{n\leq Y} |a(n)| .
 \end{split}
 \end{equation*}
Since $c=1+\frac{1}{2}(\log T)^{-1}$, we see that $-\frac{\zeta'}{\zeta}(c) \ll \log T$. Moreover, by assumption  \eqref{eq:coefficientscondition1} and partial summation imply that 
$$\sum_{n\leq Y} |a(n)| \ll T(\log T)^{1-\eta} \quad \text{and} \quad  \sum_{m>Y}|a(m)|m^{-c} \ll (\log T)^{2-\eta}.$$
 It follows that $I_{1}'' \ll T(\log T)^{4-2\eta}$, and combining this estimate with \eqref{eq:I1prime} yields \eqref{eq:I1}.

\smallskip

We now evaluate $I_3$, the contribution for the left-hand side of the contour.  By the functional equation, we can write $I_3=I_{3}'-I_{3}''$  where
\begin{align*}
  I_{3}'  =  \frac{1}{2 \pi i} \int_{1-c+i\tau_2}^{1-c+i\tau_1}  \frac{\psi_{_f}'}{\psi_{_f}}(s) A(s) \overline{A}(1\!-\!s) ds  
    \end{align*}
  and
 \[
  I_{3}''  = \frac{1}{2 \pi i} \int_{1-c+i\tau_2}^{1-c+i\tau_1}  \frac{L'}{L}(1\!-\!s,\overline{f}) A(s) \overline{A}(1\!-\!s) ds.
 \] 
After a variable change, it follows that 
\begin{equation}
\begin{split}
  \label{eq:I3''identity}
  I_{3}''    & = \frac{1}{2 \pi} \int_{\tau_2}^{\tau_1}    \frac{L'}{L}(c\!-\!it,\overline{f}) A(1\!-\!c\!+\!it) \overline{A}(c\!-\!it)  dt  
  \\
    &= \overline{ 
   \frac{1}{2 \pi i} \int_{c+i\tau_2}^{c+i\tau_1}   
     \frac{L'}{L}(s,f)  \overline{A}(1\!-\!s)A(s) ds }  = -\overline{I_1}.
\end{split}
\end{equation}
We now estimate the integral $I_3'$. Shifting the line of integration right from $\Re(s) =1-c$ to $\Re(s) =\frac{1}{2}$ and using \eqref{trivial_est}, we see that
\begin{align*}
    I_{3}'  =- \frac{1}{2 \pi} \int_{\tau_1}^{\tau_2}    \frac{\psi_{_f}'}{\psi_{_f}}(\tfrac{1}{2}\!+\!it) \big| A(\tfrac{1}{2}\!+\!it)\big|^2 \ \! dt + O\Big(  T (\log T)^{4-2\eta} \Big).
\end{align*}
Applying the estimate for $\psi_{_f}'(s)/\psi_{_f}(s)$ in \eqref{eq:psiprimepsistirling}, the integral on the right-hand of the above equation is
\begin{align*}
 \frac{ \log X}{ \pi}   \int_{\tau_1}^{\tau_2} \big| A(\tfrac{1}{2}\!+\!it)\big|^2 \ \! dt
  + O\!\left(  \int_{\tau_1}^{\tau_2} \big| A(\tfrac{1}{2}\!+\!it)\big|^2 \ \! dt  \right). 
\end{align*}
Furthermore, Lemma \ref{MontgomeryVaughan} and \eqref{eq:coefficientscondition2} imply that
\[
   \int_{\tau_1}^{\tau_2} \big| A(\tfrac{1}{2}\!+\!it)\big|^2 \ \! dt =
   \sum_{n \le Y} \frac{|a(n)|^2}{n} ( \tau_2-\tau_1 + O(n))
   = T  \sum_{n \le Y} \frac{|a(n)|^2}{n} + O( T (\log T)^2). 
\]
Hence, we deduce that 
\begin{align*}
 I_{3}' 
  &=  \frac{T}{\pi} \log X \sum_{n \le Y} \frac{|a(n)|^2}{n} + O(  T (\log T)^{4-2\eta} )
\end{align*}
since $\eta \leq \frac{1}{2}$.  Combining this estimate with \eqref{eq:I3''identity} yields \eqref{eq:I3}, and completes the proof of the proposition. 
\end{proof}

\begin{proof}[Proof of Proposition \ref{dmvbd}]  Under the assumption of  $\text{RH}_f$,  the proof of this proposition follows from Lemma \ref{NfTSfT} and Lemma \ref{MontgomeryVaughan}. 
Recall that 
\[
     S_1 = \sum_{n=1}^{\infty} |b(n)|^2   \ \! (T + n ) \ \!  , \quad 
  S_2 = \sum_{n=1}^{\infty} |b(n) |^2  (T + n) \log^2 n, \quad \text{and} \quad 
  S_3 = \sum_{ n=1}^{\infty} |b(n)|. 
\]
We choose $\tau_1$ and $\tau_2$ such that $T-1 \le \tau_1 \le T$ and $2T \le \tau_2 \le 2T+1$ where $\tau_j$ is chosen not coincide with any $\gamma_{\!_f}$ for $j=1$ or 2. 
Summing by parts, we have
\begin{align*}
   \sum_{T < \gamma_{\!_f} \le 2T} | B(\gamma_{\!_f})|^2\ & \le   \sum_{\tau_1 < \gamma_{\!_f} \le \tau_2} | B(\gamma_{\!_f})|^2  
   \\
     & = \int_{\tau_1}^{\tau_2} |B(t)|^2 \ \! d N_{\!_f}(t) 
     \\
     &  = \int_{\tau_1}^{\tau_2} |B(t)|^2 \ \! \vartheta'_{\!_f}(t) \ \! dt + \int_{\tau_1^-}^{\tau_2} |B(t)|^2 \ \! dS_{\!_f}(t).
\end{align*}
Using the estimate $\vartheta'_{\!_f}(t) = O(\log t)$, it follows that
the first integral in the last line of the inequality is 
\begin{equation}
   \label{eq:J1}
   \ll  \log T \int_{\tau_1}^{\tau_2} |B(t)|^2dt.
\end{equation}
Now set $\overline{B}(t) = \sum_{n=1}^{\infty} \overline{b(n)} n^{-it}$. 
To estimate the second integral in the last line of the above inequality, we integrate by parts and use the bound for $S_{\!_f}(t)$ in \eqref{eq:SfTbound}. In this way, we find that 
\begin{equation}
\begin{split}
  \label{eq:J2}
  \int_{\tau_1^-}^{\tau_2} |B(t)|^2 \ \! dS_{\!_f}(t) & = \int_{\tau_1^-}^{\tau_2} B(t) \overline{B}(-t) \ \! dS_{\!_f}(t) 
   \\
  & = S_{\!_f}(t) |B(t)|^2 \Big|_{\tau_1^-}^{\tau_2} - \int_{\tau_1}^{\tau_2} S_{\!_f}(t) B(t) \overline{B}'\!(-t) \ \! dt 
  + \int_{\tau_1}^{\tau_2} S_{\!_f}(t)  B'(t) \overline{B}(-t) \ \! dt 
  \\
  & \ll \frac{\log T}{\log \log T}  \left( \max_{\tau_1 \le t \le \tau_2} |B(t)|^2
  + \int_{\tau_1}^{\tau_2} \Big( | B(t) \overline{B}'(t)| + |B'(t) \overline{B}(t)| \Big) \ \! dt  \right) 
  \\
  & \ll \frac{\log T}{\log \log T} 
  \left( 
   \max_{\tau_1 \le t \le \tau_2} |B(t)|^2+ \sqrt{ \int_{\tau_1}^{\tau_2} |B(t)|^2 \ \! dt  \cdot \int_{\tau_1}^{\tau_2} |B'(t)|^2 dt .}
  \ \! \right)
  \\
  & \ll \frac{\log T}{\log \log T} 
  \left( 
   \max_{\tau_1 \le t \le \tau_2} |B(t)|^2+ \sqrt{ \int_{T/2}^{4T} |B(t)|^2 \ \! dt  \cdot \int_{T/2}^{4T} |B'(t)|^2 dt .}
  \ \! \right).
\end{split}
\end{equation}
Note that the inequality in the penultimate line follows from an application of the Cauchy-Schwartz inequality to the integral in the previous line. Finally, we observe that 
\begin{align*}
    \int_{T/2}^{4T} |B(t)|^2 dt \ll S_1,  \quad
     \int_{T/2}^{4T} |B'(t)|^2 dt \ll S_2,  \quad \text{and} \quad
     |B(t)| \le \sum_{n=1}^{\infty} |b(n)|  = S_3.   
\end{align*}
The first two estimates follow from Lemma \ref{MontgomeryVaughan} and the third estimate is trivial.
Inserting these bounds into \eqref{eq:J1} and \eqref{eq:J2}, we complete the proof of the proposition.
\end{proof}

\begin{proof}[Proof of Proposition \ref{dmv:primes}] The proof of this lemma uses the Landau-Gonek explicit formula (Lemma \ref{gonek}). The estimate in \eqref{1st_prime_sum}  is a discrete analogue of Lemma 3 in Soundararajan \cite{S} and, as might be expected, the proof of the two estimates in \eqref{1st_prime_sum} and \eqref{2nd_prime_sum} are similar.  

\smallskip

Define the coefficients $a_{m,x}(n)$ by
\begin{equation}\label{newmvt1}
\Bigg(\sum_{p\le x} \frac{a(p)}{p^s}\Bigg)^{\!m} = \sum_{n\le x^m} \frac{a_{m,x}(n)}{n^s},
\end{equation}
where $a_{m,x}(n)=0$ unless $n$ is the product of $m$ (not necessarily distinct) primes $p\le x$. With this notation, we have
\begin{equation}\label{newmvt2}
 \sum_{n\le x^m} \frac{|a_{m,x}(n)|^2}{n} \leq m ! \Bigg(\sum_{p\le x} \frac{|a(p)|^2}{p}\Bigg)^{\! m} \quad \text{and} \quad  \sum_{n\le x^{m/2}} \frac{|a_{m,\sqrt{x}}(n)|^2}{n^2} \leq m ! \Bigg(\sum_{p\le \sqrt{x}} \frac{|a(p)|^2}{p^2}\Bigg)^{\! m}.
\end{equation}
The first inequality is due to Soundararajan (see the proof of Lemma 3 in \cite{S}), and the second inequality can be proved similarly.
 
\smallskip

Assuming $\text{RH}_f$, we note that $1-\rho_{\!_f} = \overline{\rho_{\!_f}}$ for any non-trivial zero $\rho_{\!_f} = \frac{1}{2}+i\gamma_{\!_f}$ of $L(s,f)$. This observation, combined with \eqref{newmvt1}, implies that
\[
 \Bigg| \sum_{p\le x} \frac{a(p)}{p^{  \rho_{{\!_f}}+w}} \Bigg|^{2 m} = \sum_{d\leq x^m} \sum_{n\leq x^m} \frac{a_{ m,x}(d) \overline{a_{ m,x}(n)} }{d^{\rho_{\!_f}+w} n^{1-\rho_{\!_f}+\overline{w}}}
\]
and, moreover, that
\begin{equation*}
\begin{split}
  \sum_{0<\gamma_{\!_f}\leq T} \Bigg| \sum_{p\le x} \frac{a(p)}{p^{  \rho_{{\!_f}}+w}} \Bigg|^{2 m} = N_{\!_f}(T) & \sum_{n\le x^m}  \frac{|a_{ m,x}(n)|^2}{n^{1+2\Re(w)}} 
  \\
  &+ 2\Re   \sum_{d\leq x^ m} \frac{a_{ m,x}(d)}{d^w} \sum_{d<n\leq x^m} \frac{ \overline{a_{m,x}(n)} }{ n^{1+\overline{w}}}   \sum_{0<\gamma_{\!_f}\leq T} \Big(\frac{n}{d}\Big)^{  \rho_{{\!_f}}}.
\end{split}
\end{equation*}
Thus, by \eqref{newmvt2} and the fact that $\Re(w) \ge 0$, it follows that
\[
N_{\!_f}(T) \sum_{n\le x^m}  \frac{|a_{m,x}(n)|^2}{n^{1+2\Re(w)}} \ll N_{\!_f}(T)  \sum_{n\le x^m}  \frac{|a_{m,x}(n)|^2}{n} \ll m ! \ \! N_{\!_f}(T)\Bigg(\sum_{p\le x} \frac{|a(p)|^2}{p}\Bigg)^{\! m}.
\]
Appealing to Lemma \ref{gonek}, using the estimate $|\Lambda_f(n)|\le 2 \Lambda(n)$ and the fact that $\Re(w) \ge 0$, it follows that
\[
\Bigg|\sum_{d\leq x^m} \frac{a_{m,x}(d)}{d^w} \sum_{d<n\leq x^m} \frac{ \overline{a_{m,x}(n)} }{ n^{1+\overline{w}}}   \sum_{0<\gamma_{\!_f}\leq T} \Big(\frac{n}{d}\Big)^{  \rho_{{\!_f}}} \Bigg| \le \Sigma_1+\Sigma_2+\Sigma_3+\Sigma_4
\]
where
\[
\Sigma_1 = \frac{T}{2\pi} \sum_{  d k \le x^m} \frac{|a_{m,x}(dk)||a_{m,x}(d)| \Lambda(k)}{dk},
\]
\[
\Sigma_2 = O\left( \log T \log \log T \sum_{d\leq x^m} \sum_{d<n\leq x^m} \frac{|a_{m,x}(d)a_{m,x}(n)|}{d}  \right),
\]
\[
\Sigma_3 = O\left(  \sum_{d\leq x^m}  \sum_{d<n\leq x^m} \frac{|a_{m,x}(d)  a_{m,x}(n)|}{d}\frac{\log(n/d)}{\langle n/d \rangle}   \right), 
\]
and
\[
\Sigma_4 = O\left( \log T \sum_{d\leq x^m}  \sum_{d<n\leq x^m} \frac{|a_{m,x}(d) a_{m,x}(n)|}{n \log(n/d)}  \right).
\]
In order to bound these sums, we shall repeatedly use the inequality
\begin{equation}\label{newmvt3}
2|a_{m,x}(d)a_{m,x}(n)| \le \Delta |a_{m,x}(d)|^2 + \frac{|a_{m,x}(n)|^2}{\Delta}
\end{equation}
which holds for any $\Delta >0$. We estimate $\Sigma_1$ first. 
By \eqref{newmvt3} with $\Delta=1$, we have
\begin{eqnarray*}
\Sigma_1 &\leq& \frac{T}{4\pi} \sum_{n\le x^{m}} \frac{|a_{m,x}(n)|^2}{n} \sum_{k|n} \Lambda(k) + \frac{T}{4\pi} \sum_{d\le x^{m}} \frac{|a_{m,x}(d)|^2}{d} \sum_{k \le x^m/d} \frac{\Lambda(k)}{k}
\\
&\ll& T \log(x^m) \sum_{n\leq x^m}  \frac{|a_{m,x}(n)|^2}{n}  \ll  m !  \ \! N_{\!_f}(T)  \Bigg(\sum_{p\le x} \frac{|a(p)|^2}{p}\Bigg)^{\! m}
\end{eqnarray*}
upon using \eqref{newmvt2}, the inequality $N_{\!_f}(T)\gg T\log T$, and the well known estimates $\sum_{k|n} \Lambda(k) = \log n$ and $ \sum_{k\leq \xi} \frac{\Lambda(k)}{k} \ll \log \xi$.

\smallskip

Next we bound $\Sigma_2$.  By \eqref{newmvt3} with $\Delta=1$, it follows that
\begin{eqnarray*}
\Sigma_2 &\ll& \log T \log\log T  \sum_{d\leq x^m} \sum_{d<n\leq x^m} \left\{ \frac{|a_{m,x}(d)|^2}{d} + \frac{|a_{m,x}(n)|^2 }{d} \right\}
\\
&\ll&  \log T \log\log T  \left\{  x^m \sum_{d\leq x^m}  \frac{|a_{m,x}(d)|^2 }{d} + \sum_{d\leq x^m} \frac{1}{d} \sum_{d<n\leq x^m} \frac{ x^m }{n}   |a_{m,x}(n)|^2 \right\}
\\
&\ll& T^{2/3} \log^2 T  \log\log T \sum_{n\leq x^m}  \frac{|a_{m,x}(n)|^2 }{n}
\\
&\ll& m !  \ \! N_{\!_f}(T)  \Bigg(\sum_{p\le x} \frac{|a(p)|^2}{p}\Bigg)^{\! m}
\end{eqnarray*}
using $x^m\le T^{2/3}$ and \eqref{newmvt2}. 

\smallskip

We now estimate $\Sigma_3$. Again using \eqref{newmvt3}, we have
\[
\Sigma_3 \ll 
\sum_{d\leq x^m} \sum_{d<n\leq x^m} \left\{ \frac{\Delta |a_{m,x}(d)|^2}{d} + \frac{|a_{m,x}(n)|^2 }{d \Delta} \right\} \frac{\log(n/d)}{\langle n/d \rangle}
 \ll \Sigma_{31}+\Sigma_{32},
\]
say, where $\Sigma_{31}$ is the double sum including the terms with $|a_{m,x}(d)|^2$ and  $\Sigma_{32}$ is the double sum including the terms with $|a_{m,x}(n)|^2$. 
 Writing $n=ad+b$ with $-d/2<b \le d/2,$ we see that
\[
\Sigma_{31} \ll  \sum_{d\leq x^m}  \frac{|a_{m,x}(d)|^2}{d} \sum_{a\le \frac{ x^m}{d}+1} \sum_{-\frac{d}{2}<b\le \frac{d}{2}} \frac{\log(a\!+\!b/d)}{\langle a\!+\! b/d \rangle}.
\]
Now $\langle a+b/d \rangle$ equals $|b|/d$ if $a$ is the power of a prime and  $\langle a+b/d \rangle \ge 1/2$, otherwise. Using  \eqref{newmvt2},  the estimate $\sum_{n\le x} \Lambda(n) \ll x$, and the inequality $x^m\le T^{2/3}$,  we find that
\begin{equation}
\begin{split}
  \label{Sigma31bd}
\Sigma_{31} &\ll \Delta \sum_{d\leq x^m}  \frac{|a_{m,x}(d)|^2}{d} \left\{  \sum_{a\le \frac{ x^m}{d}+1} \Lambda(a) \sum_{ 1<b\le \frac{d}{2}} \frac{d}{b}  + \sum_{a\le \frac{ x^m}{d}+1} \log(a\!+\!1) \sum_{ 1<b\le \frac{d}{2}} 1 \right\}
\\
&\ll \Delta x^m \log T \sum_{d\leq x^m}  \frac{|a_{m,x}(d)|^2}{d}   \ll   m! \ \! \Delta T^{2/3} \log T  \Bigg(\sum_{p\le x} \frac{|a(p)|^2}{p}\Bigg)^{\! m}.
\end{split}
\end{equation}
We now bound $\Sigma_{32}$. For $n\ge 2$, we define
\[
g(n)= \sum_{1 \le d \le n-1} \frac{1}{d \ \! \langle n/d \rangle}
\]
and since $ \langle n/d \rangle \ge \frac{1}{d}$ we note that $g(n) \le n$. 
Therefore, by \eqref{newmvt2}, we have
\begin{equation}
  \label{Sigma32bd2}
  \begin{split}
  \Sigma_{32} &\ll  \frac{1}{\Delta} \sum_{n\leq x^m} |a_{m,x}(n)|^2  \sum_{d<n} \frac{\log(n/d)}{d \ \! \langle n/d \rangle} \le \frac{1}{\Delta} \log(x^m) \sum_{n\leq x^m} |a_{m,x}(n)|^2 g(n)
\\
&\le \frac{(x^m)^2}{\Delta} \log T   \sum_{n\leq x^m} \frac{|a_{m,x}(n)|^2}{n} \ll m! \, \frac{T^{4/3}}{\Delta} \log T  \Bigg(\sum_{p\le x} \frac{|a(p)|^2}{p}\Bigg)^{\! m}.
\end{split}
\end{equation}
Choosing $\Delta = T^{1/3}$ in \eqref{Sigma31bd} and \eqref{Sigma32bd2} yields
\[
  \Sigma_3 \ll  m! \ \!  N_{\!_f}(T)   \Bigg(\sum_{p\le x} \frac{|a(p)|^2}{p}\Bigg)^{\! m}.
\]

\smallskip

Finally, we require an estimate for $\Sigma_4$. Since $d<n$, we note that $n > \sqrt{dn}$ and thus
\begin{eqnarray*}
\Sigma_4 &\ll&  \log T \sum_{d\leq x^m}  \sum_{d<n\leq x^m} \frac{|a_{m,x}(d) a_{m,x}(n)|}{n \log(n/d)} \ll  \log T \sum_{d\leq x^m}  \sum_{d<n\leq x^m} \frac{|a_{m,x}(d) a_{m,x}(n)|}{\sqrt{dn} \log(n/d)} 
\\
&\ll&  \log T \sum_{d\leq x^m}  \sum_{d<n\leq x^m} \left\{ \frac{|a_{m,x}(d)|^2}{d} +\frac{ |a_{m,x}(n)|^2}{n} \right\} \frac{1}{ \log(n/d)}
\\
&\ll& \log T \sum_{d\leq x^m} \frac{|a_{m,x}(d)|^2}{d}   \sum_{ \substack{n\leq x^m \\ d\neq n}}  \frac{1}{|\log(n/d)|} \ \ll \ x^m \log(x^m) \log T  \sum_{d\leq x^m} \frac{|a_{m,x}(d)|^2}{d}  
\\
& \ll&  m !  \ \! N_{\!_f}(T)  \Bigg(\sum_{p\le x} \frac{|a(p)|^2}{p}\Bigg)^{\!m}
\end{eqnarray*}
again using $x^m\le T^{2/3}$ and \eqref{newmvt2}. Combining estimates, we complete the proof of \eqref{1st_prime_sum}.

\smallskip

The proof of \eqref{2nd_prime_sum} is similar. Again using the observation that $\text{RH}_f$ implies $1-\rho_{\!_f} = \overline{\rho_{\!_f}}$, it follows from \eqref{newmvt1} that
\begin{equation*}
\begin{split}
  \sum_{0<\gamma_{\!_f}\leq T} \Bigg| \sum_{p\le \sqrt{x}} \frac{a(p)}{p^{ 2\rho_{{\!_f}}+w}} \Bigg|^{2 m} &= N_{\!_f}(T) \sum_{n\le x^{m/2}}  \frac{|a_{ m,\sqrt{x}}(n)|^2}{n^{2+2\Re(w)}} 
  \\
  &\quad + 2\Re   \sum_{d\leq x^{m/2}} \frac{a_{ m,\sqrt{x}}(d)}{d^w} \sum_{d<n\leq x^{m/2}} \frac{ \overline{a_{m,\sqrt{x}}(n)} }{ n^{2+\overline{w}}}   \sum_{0<\gamma_{\!_f}\leq T} \Big(\frac{n^2}{d^2}\Big)^{  \rho_{{\!_f}}}.
\end{split}
\end{equation*}
By the second inequality in \eqref{newmvt2} and the fact that $\Re(w)>0$, we see that
\[
N_{\!_f}(T) \sum_{n\le x^{m/2}}  \frac{|a_{ m,\sqrt{x}}(n)|^2}{n^{2+2\Re(w)}} \ll N_{\!_f}(T) \sum_{n\le x^{m/2}}  \frac{|a_{ m,\sqrt{x}}(n)|^2}{n^{2}} \ll m ! \ \! N_{\!_f}(T) \Bigg( \sum_{p \le \sqrt{x}} \frac{|a(p)|^2}{p^2} \Bigg)^{\! m}.
\]
Using Lemma \ref{gonek} along with the fact that $|\Lambda_f(n^2)| \leq 2 \Lambda(n^2) = 2 \Lambda(n)$, splitting the sum into four parts as in the proof of  \eqref{1st_prime_sum} and estimating similarly, we can show that 
\[
\left|  \sum_{d\leq x^{m/2}} \frac{a_{ m,\sqrt{x}}(d)}{d^w} \sum_{d<n\leq x^{m/2}} \frac{ \overline{a_{m,\sqrt{x}}(n)} }{ n^{2+\overline{w}}}   \sum_{0<\gamma_{\!_f}\leq T} \Big(\frac{n^2}{d^2}\Big)^{  \rho_{{\!_f}}} \right|  \ll m ! \ \! N_{\!_f}(T)  \Bigg( \sum_{p \le \sqrt{x}} \frac{|a(p)|^2}{p^2} \Bigg)^{\! m},
\]
which is the estimate that we require. 
\end{proof}

\section{Averages of the Fourier coefficients $\lambda_{f}(n)$}

In this section we derive asymptotic formulae and estimates for certain sums involving the arithmetic functions $\lambda_f(n)$ and $\Lambda_f(n)$.
These formulae are required when we apply Proposition \ref{dmva} and Proposition \ref{dmvbd} 
to certain Dirichlet polynomials with coefficients involving $\lambda_f(n)$.

\begin{proposition} \label{sums}
 Let $f\in H_k(q,\chi)$. Then, for sufficiently large $x$, 
 \begin{align}
 \label{eq:ransel}
 \mathscr{A}_f(x) & := \sum_{n\leq x} |\lambda_{f}(n)|^{2} = c_{f} \ \! x + O( x^{3/5}),  \\
   \label{eq:prime1}
   \mathscr{B}_f(x) &  := \sum_{p \le x} \frac{|\lambda_f(p)|^2 \log p}{p} =  \log x + O(1), \\
     \label{eq:prime2}
   \mathscr{C}_f(x) &  := \sum_{p \le x} \frac{|\lambda_f(p)|^2}{p} =  \log \log x + O(1), \ \text{ and} \\
  \label{eq:absval}
    \mathscr{D}_f(x) &  := \sum_{n \le x} |\lambda_f(n)|  \ll x (\log x)^{-\delta}
\end{align}
where $c_f$ is the positive constant defined in \eqref{eq:cfdefinition} and 
\begin{equation}
  \label{eq:delta}
\delta = \tfrac{4}{5}-\tfrac{1}{5} \sqrt{ \tfrac{27}{2} }= 0.065153\ldots .
\end{equation}
\end{proposition}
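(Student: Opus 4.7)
The plan is to deduce all four estimates from the analytic properties of the Rankin--Selberg convolution $L$-function
$$L(s, f \times \bar f) \ = \ \sum_{n\ge 1} \frac{|\lambda_f(n)|^2}{n^s},$$
which extends meromorphically to $\mathbb{C}$ with a simple pole at $s = 1$ of residue $c_f$, admits a degree-four Euler product for $\Re(s) > 1$, satisfies a functional equation relating $s$ and $1-s$, and is non-vanishing on the line $\Re(s) = 1$.

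For \eqref{eq:ransel}, I would apply Perron's formula to $L(s, f \times \bar f)$ with contour on $\Re(s) = 1 + 1/\log x$, shift past the pole at $s = 1$ (producing the main term $c_f\,x$) down to $\Re(s) = 1/2 + \varepsilon$, and bound the remaining integral using the standard convexity estimate $L(\sigma + it, f\times \bar f) \ll (1 + |t|)^{2(1-\sigma)+\varepsilon}$ on the critical strip. Balancing the truncation height of the Perron contour against the length of the shifted contour yields the error $O(x^{3/5})$, recovering the classical theorem of Rankin and Selberg.

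For \eqref{eq:prime1} and \eqref{eq:prime2}, I observe that the Euler product gives $\log L(s, f \times \bar f) = \sum_p |\lambda_f(p)|^2 p^{-s} + H(s)$, where $H(s)$ is analytic and bounded on $\Re(s) > 1/2$ since the prime-power contributions with $k \ge 2$ converge absolutely there. Combined with $\log L(s, f\times \bar f) = \log(1/(s-1)) + O(1)$ as $s \to 1^+$, a Wiener--Ikehara-type argument applied to $-L'/L(s,f\times\bar f)$, which has a simple pole of residue $1$ at $s = 1$, produces the prime number theorem $\sum_{p \le x}|\lambda_f(p)|^2 \log p = x + O(x/\log^A x)$, from which \eqref{eq:prime1} follows by partial summation after noting that the prime-power contributions with $k \ge 2$ contribute only $O(\sqrt x)$. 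A second partial summation then yields \eqref{eq:prime2}.

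The main obstacle is \eqref{eq:absval}, which is Rankin's theorem \cite{Ran}. My strategy is to apply a theorem on mean values of non-negative multiplicative functions (Wirsing's theorem, or the Selberg--Delange method) to $g(n) = |\lambda_f(n)|$: such a result yields $\sum_{n \le x} g(n) \ll x(\log x)^{\kappa - 1}$ once one establishes $\sum_{p \le x} g(p)/p \le \kappa \log\log x + O(1)$ with $\kappa < 1$. Since \eqref{eq:prime2} provides only the second-moment average, obtaining such a $\kappa < 1$ for the first moment requires interpolating between Deligne's pointwise bound $|\lambda_f(p)| \le 2$ and the second-moment asymptotic, by means of a H\"older-type inequality in which the prime sum is split according to the size of $|\lambda_f(p)|$. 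Optimizing the resulting H\"older parameters produces the specific algebraic exponent $\delta = 4/5 - (1/5)\sqrt{27/2}$, and I expect the technical heart of the argument---and the main difficulty---to be arranging this optimization so as to realize this precise value.
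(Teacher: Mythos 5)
Your treatments of \eqref{eq:ransel}--\eqref{eq:prime2} are essentially sound and run along the same lines as the sources the paper invokes: \eqref{eq:ransel} is the classical Rankin--Selberg argument, and for the prime sums the paper simply quotes Proposition 2.3 of Rudnick--Sarnak, namely $\sum_{p\le x}|\lambda_f(p)|^2\log^2 p/p=\tfrac12\log^2x+O(\log x)$, followed by partial summation. One caveat on your route to \eqref{eq:prime1}: a Wiener--Ikehara argument gives an asymptotic with no rate, whereas to deduce $\mathscr{B}_f(x)=\log x+O(1)$ by partial summation you need a quantitative error such as $O(x/\log^{2}x)$ in the prime number theorem for $L(s,f\times\bar f)$, hence a zero-free region for the Rankin--Selberg $L$-function; this is available but is a genuinely stronger input than Wiener--Ikehara, and the Rudnick--Sarnak route avoids it entirely.

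The serious gap is in \eqref{eq:absval}. You propose to obtain $\sum_{p\le x}|\lambda_f(p)|/p\le\kappa\log\log x+O(1)$ with $\kappa<1$ using only Deligne's bound $|\lambda_f(p)|\le 2$ together with the second moment \eqref{eq:prime2}. No H\"older or splitting argument can achieve this: the configuration $|\lambda_f(p)|=1$ for every $p$ is consistent with both of your inputs (it satisfies the pointwise bound and gives $\sum_{p\le x}|\lambda_f(p)|^2/p=\log\log x+O(1)$), yet its first moment is $\log\log x+O(1)$, so any estimate derived from those two hypotheses alone must allow $\kappa=1$, and the Wirsing/Halberstam--Richert upper bound then yields no power of $\log$ saving. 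The missing ingredient is the fourth moment $\sum_{p\le x}|\lambda_f(p)|^4/p=2\log\log x+O(1)$, which the paper deduces from the symmetric fourth power results of Kim and Kim--Shahidi; this is exactly how the paper extends the Rankin--Shahidi argument to general level and nebentypus. With the three constraints $\sum_{p\le x}|\lambda_f(p)|^2/p\sim\log\log x$, $\sum_{p\le x}|\lambda_f(p)|^4/p\sim 2\log\log x$, and $|\lambda_f(p)|\le 2$, the extremal problem for $\sum_{p\le x}|\lambda_f(p)|/p$ has optimum value $\bigl(\tfrac15+\tfrac15\sqrt{27/2}\bigr)\log\log x$, attained when $|\lambda_f(p)|^2$ takes the value $\tfrac23$ on nine tenths of the primes (in $1/p$-density) and the value $4$ on one tenth; this is precisely the source of $\delta=\tfrac45-\tfrac15\sqrt{27/2}$ in \eqref{eq:delta}. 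Note also that the fourth moment and Deligne's bound are needed in tandem: dropping either one (keeping the second moment) pushes the optimum back to $\log\log x$ and destroys the saving. So your plan for \eqref{eq:absval} cannot work as stated and must be repaired by bringing in the symmetric fourth power input.
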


\begin{proof}  All of these estimates follow from well known results. The estimate in \eqref{eq:ransel} can be proved using the techniques of Rankin \cite{Ran1} and Selberg \cite{Se3}. Proposition 2.3 of Rudnick and Sarnak \cite{RS} asserts that 
\[ \sum_{p\leq x} \frac{|\lambda_{f}(p)|^{2}\log^{2} p}{p} = \frac{1}{2}\log^{2} x + O(\log x),
\] 
from which \eqref{eq:prime1} and \eqref{eq:prime2} now follow by partial summation. Finally, the estimate in \eqref{eq:absval} is due to Rankin \cite{Ran} when $q=1$ and was extended by Shahidi \cite{Sh2} to higher levels ($q>1$) when $\chi$ is trivial. The Rankin-Shahidi proof can be extended to all $f\in H_k(q,\chi)$ using the estimate in \eqref{eq:prime2} along with the asymptotic formula
\[
\sum_{p \le x} \frac{|\lambda_f(p)|^4}{p} =2\log\log x+O(1)
\]
which can be deduced from the work of Kim and Shahidi \cite{KimShahidi} and of Kim \cite{Kim} on symmetric fourth power $L$-functions.
\end{proof}

The main term in Proposition \ref{dmva} involved sums of the form
\[
  \sum_{n \le x} \frac{|a(n)|^2}{n}, \quad 
  \sum_{n \le x}  \frac{(\Lambda_f* a)(n) \overline{a(n)}}{n}, \quad \text{and}  \quad
  \sum_{n=1}^{\infty} \frac{|(\Lambda_f*a)(n)|^2}{n^{\sigma}}
\]
where $a$ is a complex-valued arithmetic function satisfying certain average growth conditions, $*$ denotes Dirichlet convolution, and $\sigma > 1$.  In the following lemmas, we evaluate such sums in the cases where
$a(n)=\alpha_f(n)$ and  $a(n)=\beta_{f,x}(n)$, the arithmetic functions defined in \eqref{eq:defncoeffs}. 

\smallskip

\begin{lemma} 
   \label{rankinselbergpartialsum}
Let $f\in H_k(q,\chi)$.  Then 
$$ \sum_{n\leq x} \frac{|\alpha_{f}(n)|^{2}}{n}=\frac{1}{3}c_{f} \log^3  x + O(\log^{2}x )$$ and
$$\sum_{n\leq x} \frac{|\beta_{f,x}(n)|^{2}}{n}=\frac{7}{3}c_{f} \log^3 x + O(\log^{2}x )$$
where $c_{f}$ is the positive constant in \eqref{eq:cfdefinition}.
\end{lemma}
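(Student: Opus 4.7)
The plan is to reduce both sums to partial-summation calculations against the Rankin--Selberg asymptotic $\mathscr{A}_f(x) := \sum_{n \le x} |\lambda_f(n)|^2 = c_f x + O(x^{3/5})$ from equation \eqref{eq:ransel} of Proposition \ref{sums}. Since $\alpha_f(n) = -\lambda_f(n) \log n$ and $\beta_{f,x}(n) = -\lambda_f(n) \log(x^2/n)$, the sums in question equal
\[
S_1(x) := \sum_{n \le x} \frac{|\lambda_f(n)|^2 \log^2 n}{n} \quad \text{and} \quad S_2(x) := \sum_{n \le x} \frac{|\lambda_f(n)|^2 \log^2(x^2/n)}{n},
\]
so the task is purely to evaluate $S_1(x)$ and $S_2(x)$ asymptotically.

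First I would handle $S_1(x)$ directly. Writing $S_1(x) = \int_{1^-}^{x} \frac{\log^2 t}{t}\,d\mathscr{A}_f(t)$ and integrating by parts yields
\[
S_1(x) = \frac{\log^2 x}{x}\mathscr{A}_f(x) - \int_1^x \mathscr{A}_f(t) \frac{2 \log t - \log^2 t}{t^2}\,dt.
\]
Substituting $\mathscr{A}_f(t) = c_f t + O(t^{3/5})$, the boundary term is $c_f \log^2 x + O(x^{-2/5}\log^2 x)$, the main integral contributes $-c_f \int_1^x (2\log t - \log^2 t)/t\,dt = -c_f\log^2 x + \tfrac{c_f}{3}\log^3 x$, and the error integral $\int_1^x (\log^2 t)/t^{7/5}\,dt$ converges, hence is $O(1)$. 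Collecting these gives $S_1(x) = \tfrac{c_f}{3}\log^3 x + O(\log^2 x)$, which is the first claimed formula.

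Next I would reduce $S_2(x)$ to $S_1(x)$ by expanding
\[
\log^2(x^2/n) = 4\log^2 x - 4 (\log x)(\log n) + \log^2 n,
\]
so that
\[
S_2(x) = 4\log^2 x \cdot T_0(x) - 4 \log x \cdot T_1(x) + S_1(x),
\]
where $T_j(x) := \sum_{n \le x} |\lambda_f(n)|^2 (\log n)^j/n$ for $j=0,1$. The same partial summation argument, now with weights $1/t$ and $\log t/t$, gives $T_0(x) = c_f \log x + O(1)$ and $T_1(x) = \tfrac{c_f}{2}\log^2 x + O(1)$; here the error integrals arising from the $O(t^{3/5})$ term in $\mathscr{A}_f(t)$ again converge. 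Substituting these asymptotics and using the first part yields
\[
S_2(x) = 4c_f \log^3 x - 2c_f \log^3 x + \tfrac{c_f}{3}\log^3 x + O(\log^2 x) = \tfrac{7}{3} c_f \log^3 x + O(\log^2 x),
\]
as required. There is no real obstacle here beyond keeping track of the error terms; the only thing to verify is that each partial-summation error contributed by the $O(t^{3/5})$ remainder is absorbed into $O(\log^2 x)$, and this is straightforward since $\int_1^x t^{3/5}(\log t)^j/t^2\,dt = O(1)$ for each $j = 0,1,2$.
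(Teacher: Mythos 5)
Your proposal is correct and follows essentially the same route as the paper: both rest on partial summation (Stieltjes integration) against the Rankin--Selberg estimate $\mathscr{A}_f(t)=c_f t+O(t^{3/5})$ from \eqref{eq:ransel}, with the $O(t^{3/5})$ remainder contributing only bounded error integrals. The only difference is cosmetic: the paper evaluates $c_f\int_1^x \log^2(x^2/t)\,\frac{dt}{t}$ directly, whereas you expand $\log^2(x^2/n)$ and reduce to the sums $T_0$, $T_1$, $S_1$ — an equivalent computation.
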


\begin{proof}
By partial summation and \eqref{eq:ransel}, we have
\begin{equation*}
\begin{split}
  \sum_{n\leq x} \frac{|\alpha_{f}(n)|^{2}}{n} &= \int_{1^{-}}^{x} \frac{(\log t)^2}{t} d \mathscr{A}_f(t) 
  \\
  &=  c_f \int_{1}^{x} \frac{(\log t)^2}{t} dt + O(\log^2 x ) = \frac{1}{3} c_f \log^3 x + O(\log^2 x). 
\end{split}
\end{equation*}
Similarly, we see that 
\begin{equation*}
\begin{split}
 \sum_{n\leq x} \frac{|\beta_{f,x}(n)|^{2}}{n}  &=   \int_{1^{-}}^{x} \frac{ \log(\frac{x^2}{t})^2}{t} d \mathscr{A}_f(t)
 \\
 &=   c_f \int_{1}^{x} \frac{ \log(\frac{x^2}{t})^2}{t} dt + O(\log^2 x)
 =  \frac{7}{3} c_f \log^3 x + O(\log^2 x).  \\
 \end{split}
 \end{equation*}
This completes the proof of the lemma.
\end{proof}

\smallskip

\begin{lemma} \label{RSBound3}
Let $f\in H_k(q,\chi)$. Then, for any prime $p$, we have
$$ \sum_{m\leq y} \frac{|\lambda_{f}(mp)|^{2}}{m} = O(\log y)$$
where the implied constant is independent of $p$.
\end{lemma}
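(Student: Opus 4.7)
The plan is to use the multiplicativity of $\lambda_f$ together with Deligne's bound in order to reduce the sum back to a Rankin--Selberg type average that has already been estimated via Proposition \ref{sums}. The essential point is that writing $m = p^k m'$ with $\gcd(m',p)=1$ decouples the ``$p$-part'' of $mp$ from the rest, and the $p$-part is controlled by an absolute constant after summing geometrically in $k$.

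First, I would stratify the sum according to the $p$-adic valuation of $m$. Every $m$ in the range factors uniquely as $m = p^k m'$ with $k \ge 0$ and $(m',p)=1$, and then $mp = p^{k+1} m'$ with $(m',p)=1$. By multiplicativity,
\[
|\lambda_f(mp)|^2 = |\lambda_f(p^{k+1})|^2 \, |\lambda_f(m')|^2,
\]
and Deligne's bound \eqref{eq:Delignesbound} gives $|\lambda_f(p^{k+1})| \le d(p^{k+1}) = k+2$. Hence
\[
\sum_{m \le y} \frac{|\lambda_f(mp)|^2}{m} \ \le \ \sum_{k \ge 0} \frac{(k+2)^2}{p^k}\!\!\sum_{\substack{m' \le y/p^k \\ (m',p)=1}}\!\! \frac{|\lambda_f(m')|^2}{m'}.
\]

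Second, I would bound the inner sum uniformly in $p$ and $k$ by extending to all $m' \le y$ and invoking partial summation against \eqref{eq:ransel}: from $\mathscr{A}_f(x) = c_f x + O(x^{3/5})$ one obtains the standard estimate
\[
\sum_{m' \le z} \frac{|\lambda_f(m')|^2}{m'} = c_f \log z + O(1) \ll \log y
\]
for all $1 \le z \le y$, with an implied constant depending only on $f$ (and this is trivially true when $y/p^k < 1$, as the inner sum is empty).

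Third, the remaining outer sum is
\[
\sum_{k\ge 0} \frac{(k+2)^2}{p^k} \le \sum_{k \ge 0} \frac{(k+2)^2}{2^k} = O(1),
\]
since $p \ge 2$. Multiplying this absolute constant by the $O(\log y)$ bound on the inner sum gives the claimed estimate, with the implied constant independent of $p$.

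There is really no obstacle here: the only thing to watch is that the bound on the inner sum holds \emph{uniformly} in the truncation $y/p^k$, which is why I would use the monotonicity step of extending to $m' \le y$ before applying the Rankin--Selberg asymptotic. The coprimality condition $(m',p)=1$ is harmless and simply discarded when passing to the upper bound.
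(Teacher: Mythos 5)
Your proof is correct and follows essentially the same route as the paper: decompose by the power of $p$ dividing $m$, use multiplicativity and Deligne's bound $|\lambda_f(p^{k+1})|\le k+2$, control the geometric-type sum in $k$ using $p^k\ge 2^k$, and bound the inner sum by $O(\log y)$ via partial summation against \eqref{eq:ransel}. The paper merely writes the $k=0$ term (with the bound $|\lambda_f(p)|^2\le 4$) separately from the $k\ge 1$ terms, which is a cosmetic difference.
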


\begin{proof}
By the multiplicativity of the Fourier coefficients $\lambda_f(n)$,  we have
\begin{equation*}
\begin{split}
\sum_{m\leq y} \frac{|\lambda_{f}(mp)|^{2}}{m} &= \sum_{\substack{m\leq y \\ (p,m)=1}} \frac{|\lambda_{f}(m)|^{2} |\lambda_{f}(p)|^{2}}{m}+ 
\sum_{j \geq 1} \sum_{\substack{n\leq y/p^{j} \\ (p,n)=1}} \frac{|\lambda_{f}(n)|^{2} |\lambda_{f}(p^{j+1})|^{2}}{np^{j}} 
\\
&\leq \sum_{m\leq y} \frac{|\lambda_{f}(m)|^{2}}{m} \left( 4 + \sum_{j=1}^{\infty} \frac{(j\!+\!2)^{2}}{2^{j}}\right) 
\end{split}
 \end{equation*}
using Deligne's estimate $|\lambda_{f}(n)| \leq d(n)$.  Since the sum over $j$ converges, the lemma now follows from \eqref{eq:ransel} and partial summation.
 \end{proof}

\smallskip

\begin{lemma}\label{RSBound4}
Let $f\in H_k(q,\chi)$, $\ell$ be a natural number, and $p^\ell \le x$. Then, for $1 \le y \le x$, we have
\begin{equation*}
\begin{split}
\sum_{n\leq y} \frac{\alpha_{f}(n) \overline{\alpha_{f}(p^{\ell}n)}}{n} &= \sum_{n\leq y} \frac{|\lambda_{f}(n)|^{2} \overline{\lambda_{f}(p^{\ell})}}{n} \log(n) \log(p^{\ell}n) +O\!\left( \frac{\ell  \log^3 x}{p} \right)
\end{split}
\end{equation*}
and
\begin{equation*}
\begin{split}
\sum_{n\leq y} \frac{\beta_{f,x}(n) \overline{\beta_{f,x}(p^{\ell}n)}}{n} &= \sum_{n\leq y} \frac{|\lambda_{f}(n)|^{2} \overline{\lambda_{f}(p^{\ell})}}{n} \log\Big(\frac{x^{2}}{n}\Big) \log\Big(\frac{x^{2}}{p^{\ell}n}\Big)  +O\!\left( \frac{\ell  \log^{3} x}{p} \right).
\end{split}
\end{equation*}
\end{lemma}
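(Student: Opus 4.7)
The plan is to exploit Hecke multiplicativity of the coefficients $\lambda_f$. The key observation is that when $(n,p)=1$ we have $\lambda_f(p^\ell n)=\lambda_f(p^\ell)\lambda_f(n)$, hence $\lambda_f(n)\overline{\lambda_f(p^\ell n)}=|\lambda_f(n)|^2\overline{\lambda_f(p^\ell)}$. So after splitting $\sum_{n\le y}=\sum_{(n,p)=1}+\sum_{p\mid n}$, the coprime portions on the two sides of each asserted identity cancel exactly, and it remains only to bound the $p\mid n$ contribution on each side separately.

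To estimate the $p\mid n$ contribution I would write $n=p^j m$ with $j\ge 1$ and $(m,p)=1$, so that by multiplicativity both $\lambda_f(n)\overline{\lambda_f(p^\ell n)}$ and $|\lambda_f(n)|^2\overline{\lambda_f(p^\ell)}$ factor as $|\lambda_f(m)|^2$ times a quantity depending only on $j$, $\ell$, and $p$. Deligne's bound $|\lambda_f(p^a)|\le a+1$ dominates each of these factors by $(j+1)(j+\ell+1)|\lambda_f(m)|^2$. Because $p^\ell\le x$ and $n\le y\le x$ we have $p^\ell n\le x^2$, so $\log n\ll \log x$ and $\log(p^\ell n)\ll\log x$ for the first identity, while $\log(x^2/n)\ll\log x$ and $\log(x^2/(p^\ell n))\ll\log x$ for the second; in both cases every logarithmic factor is uniformly $O(\log x)$.

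After these reductions, the total error is dominated by
\[
\log^2 x\sum_{j\ge 1}\frac{(j+1)(j+\ell+1)}{p^j}\sum_{m\le y/p^j}\frac{|\lambda_f(m)|^2}{m}.
\]
By the Rankin-Selberg estimate $\mathscr{A}_f(x)\sim c_f x$ from Proposition \ref{sums} and partial summation, the inner sum over $m$ is $O(\log x)$ uniformly in $j$. The outer $j$-sum then splits into three series $\sum_{j\ge 1}j^a/p^j$ for $a=0,1,2$, each bounded by $O(1/p)$; weighting by $\ell$ in the linear and constant terms yields an overall $O(\ell/p)$. Combining gives the claimed error $O(\ell\log^3 x/p)$, and the proof of the $\beta_{f,x}$ identity is identical since its logarithmic weights obey the same $O(\log x)$ bound.

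There is no genuine obstacle here; the argument is essentially bookkeeping. The two points requiring care are (i) verifying that the $j$-sum produces a factor of $1/p$ rather than merely $O(1)$, which is where the restriction $j\ge 1$ forced by $p\mid n$ is crucial, and (ii) confirming that $\log(p^\ell n)$ and $\log(x^2/(p^\ell n))$ are uniformly $O(\log x)$, which uses the hypothesis $p^\ell\le x$ together with $n\le y\le x$.
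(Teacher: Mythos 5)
Your proposal is correct and takes essentially the same route as the paper: split the sum according to divisibility by $p$, use Hecke multiplicativity so that the coprime portions of the two sides agree exactly, and bound the $p\mid n$ contribution by writing $n=p^jm$ with $(m,p)=1$, applying Deligne's bound and the Rankin--Selberg estimate $\sum_{m\le t}|\lambda_f(m)|^2/m\ll\log t$; the paper organizes the bookkeeping slightly differently (it re-inserts the $p\mid n$ terms into the main sum using its auxiliary bound for $\sum_{m\le y}|\lambda_f(mp)|^2/m$), but this is cosmetic. One small inaccuracy: on the main-term side the relevant factor is $|\lambda_f(p^j)|^2\,|\lambda_f(p^\ell)|\le (j+1)^2(\ell+1)$, which is not dominated by $(j+1)(j+\ell+1)$; replacing your bound by $\ll \ell\,(j+1)^2$ repairs this and changes nothing, since $\sum_{j\ge1}(j+1)^2p^{-j}\ll p^{-1}$ still yields the claimed $O(\ell\log^3x/p)$.
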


\begin{proof}
Both of the estimates in the above lemma can be established in a very similar manner. For this reason, we prove only the first estimate involving $\alpha_{f}(n)$. First we partition the sum into disjoint pieces based upon the exact power of $p$ dividing $n$. In particular, we have
\begin{equation}
  \label{eq:decomposition}
  \sum_{n\leq y} \frac{\alpha_{f}(n) \overline{\alpha_{f}(p^{\ell}n)}}{n} = \sum_{\substack{n\leq y \\ (p,n)=1}}  \frac{|\lambda_{f}(n)|^2 \overline{\lambda_{f}(p^{\ell})}}{n}\log(n) \log(p^{\ell}n) +  
  \sum_{j \geq 1} \sum_{\substack{n\leq y \\ p^{j} || n}}  \frac{\alpha_{f}(n) \overline{\alpha_{f}(p^{\ell}n)}}{n}.
\end{equation}
Using the inequality $|\lambda_{f}(n)|\leq d(n)$, the second sum on the right-hand side is 
\begin{equation*}
\begin{split}
&\ll \sum_{j\geq 1} \sum_{m\leq y/p^{j}}  \frac{|\lambda_{f}(m)|^{2} d(p^{j})d(p^{\ell+j})}{p^{j}m}\log(p^{j}m)\log(p^{\ell+j}m) 
\\
 & \ll  \log y\log(p^{\ell}y) \sum_{j=1}^{\infty} \frac{(j\!+\!1)(\ell\!+\!j\!+\!1)}{p^{\ell}}  \sum_{m\leq y} \frac{|\lambda_{f}(m)|^{2}}{m}  
\\
& \ll  \frac{\ell \log^3 x}{p}. \\
\end{split}
\end{equation*}
Those terms with $n \le y$ and $p|n$ may be added into the first sum on the right-hand side of \eqref{eq:decomposition} with an error of
\begin{equation*}
\begin{split}
\ll \sum_{\substack{n\leq y \\ p|n}} \frac{|\lambda_{f}(n)|^{2} |\lambda_f(p^\ell)|}{n} \log(n) \log(p^{\ell}n) & \ll 
\frac{\ell}{p} \log y \log(p^{\ell}y)  \sum_{ m\leq y/p } \frac{ |\lambda_{f}(mp)|^{2}}{m} 
 \ll  \frac{\ell \log^3 x}{p}
\end{split}
\end{equation*}
using $|\lambda_f(p^\ell)|\le 2\ell$ and Lemma \ref{RSBound3}.
By combining estimates, we complete the proof of the lemma.
\end{proof}

\begin{lemma}\label{lambda_prime}
 Let $f\in H_k(q,\chi)$. Then
\begin{equation} \label{first}
\sum_{mn\leq x} \frac{\Lambda_{f}(m)\alpha_f(n) \overline{\alpha_f(mn)}}{mn} = \frac{1}{8} c_{f}  \log^{4}x + O(\log^{3}x)
\end{equation}
and 
\begin{equation} \label{second}
  \sum_{mn\leq x} \frac{\Lambda_{f}(m)\beta_{f,x}(n) \overline{\beta_{f,x}(mn)}}{mn} = \frac{9}{8} c_{f}  \log^{4}x + O(\log^{3}x)
\end{equation}
where $c_{f}$ is the positive constant defined in \eqref{eq:cfdefinition}.
\end{lemma}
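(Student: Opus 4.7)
My plan is to exploit the fact that $\Lambda_f$ is supported on prime powers, apply Lemma \ref{RSBound4} to reduce the inner $n$-sum to a weighted sum of $|\lambda_f(n)|^2$, and then evaluate the result by two successive applications of partial summation using the estimates \eqref{eq:ransel} and \eqref{eq:prime1} from Proposition \ref{sums}. Since \eqref{first} and \eqref{second} share the same structure, I would focus on \eqref{first} and then indicate the small modifications needed for \eqref{second}.

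First I would write $m = p^\ell$ and decompose
\begin{equation*}
\sum_{mn\le x}\frac{\Lambda_f(m)\alpha_f(n)\overline{\alpha_f(mn)}}{mn} \;=\; \sum_p \sum_{\substack{\ell\ge 1\\ p^\ell\le x}} \frac{\Lambda_f(p^\ell)}{p^\ell}\sum_{n\le x/p^\ell}\frac{\alpha_f(n)\overline{\alpha_f(p^\ell n)}}{n}.
\end{equation*}
Inserting Lemma \ref{RSBound4} would replace the innermost sum by $\overline{\lambda_f(p^\ell)} \sum_{n\le x/p^\ell}|\lambda_f(n)|^2\log n\log(p^\ell n)/n$, with an additive error $O(\ell\log^3 x/p)$; combined with $|\Lambda_f(p^\ell)|\le 2\log p$, this error summed over $p$ and $\ell$ contributes only $O(\log^3 x)\sum_p (\log p)/p^2 = O(\log^3 x)$. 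I would next discard the terms with $\ell\ge 2$: for these, the prefactor satisfies $|\Lambda_f(p^\ell)\overline{\lambda_f(p^\ell)}|/p^\ell \ll (\ell+1)\log p/p^\ell$, while the inner sum is $O(\log^3 x)$ by partial summation with \eqref{eq:ransel}, so their total contribution is $O(\log^3 x)$ as well.

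The entire main term therefore comes from $\ell=1$, where $\Lambda_f(p)\overline{\lambda_f(p)} = |\lambda_f(p)|^2\log p$ gives
\begin{equation*}
M_1 = \sum_{p\le x}\frac{|\lambda_f(p)|^2\log p}{p}\sum_{n\le x/p}\frac{|\lambda_f(n)|^2\log n\log(pn)}{n}.
\end{equation*}
A first application of partial summation to the $n$-sum using \eqref{eq:ransel} would turn it into $c_f\int_1^{x/p}\log t\log(pt)\,\frac{dt}{t}+O(\log^2 x)$, which evaluates to $c_f\big[(\log(x/p))^3/3 + \log p\,(\log(x/p))^2/2\big] + O(\log^2 x)$. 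A second application of partial summation, now to the $p$-sum using \eqref{eq:prime1} together with the substitution $u=\log t$, reduces $M_1$ to
\begin{equation*}
M_1 = c_f \int_0^{\log x}\left[\frac{(\log x - u)^3}{3} + \frac{u(\log x - u)^2}{2}\right]du + O(\log^3 x) = \frac{c_f}{8}\log^4 x + O(\log^3 x),
\end{equation*}
since the two elementary integrals equal $(\log x)^4/12$ and $(\log x)^4/24$. For \eqref{second}, the only change is that the corresponding inner integral is $\int_0^{\log(x/p)}(2\log x - u)(2\log x - u - \log p)\,du$, which a direct expansion shows equals $\tfrac{7}{3}(\log x)^3 - \tfrac{5}{2}(\log x)^2\log p + \tfrac{1}{6}(\log p)^3$; integrating against the $\mathscr{B}_f$-weight over $\log p\in[0,\log x]$ produces the constant $\tfrac{7}{3} - \tfrac{5}{4} + \tfrac{1}{24} = \tfrac{9}{8}$.

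The principal technical obstacle will be the careful bookkeeping of error terms across both applications of partial summation: boundary contributions and the propagated errors $O(x^{3/5})$ from \eqref{eq:ransel} and $O(1)$ from \eqref{eq:prime1} must all be shown, after integration, to be $O(\log^3 x)$ uniformly in $p$. This hinges on the implied constants in Lemmas \ref{rankinselbergpartialsum} and \ref{RSBound3} being independent of $p$, which they are; once the uniformity is secured, the identification of the constants $1/8$ and $9/8$ is purely a matter of elementary calculus.
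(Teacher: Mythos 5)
Your proposal is correct and follows essentially the same route as the paper's proof: decompose over prime powers $m=p^\ell$, apply Lemma \ref{RSBound4}, discard $\ell\ge 2$ using $|\Lambda_f(p^\ell)|\le 2\log p$ and $|\lambda_f(p^\ell)|\le \ell+1$, and then evaluate the $\ell=1$ main term by two applications of partial summation with \eqref{eq:ransel} and \eqref{eq:prime1}. Your constant computations ($\tfrac{1}{12}+\tfrac{1}{24}=\tfrac18$ and $\tfrac73-\tfrac54+\tfrac1{24}=\tfrac98$) agree with the paper's.
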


\begin{proof} 
The sum we are evaluating in \eqref{first} is 
\[
  \sum_{p^{\ell}\leq x} \frac{\Lambda_{f}(p^{\ell})}{p^{\ell}} \sum_{n\leq x/p^{\ell}} \frac{\alpha_f(n) \overline{\alpha_f(p^{\ell}n)}}{n}. 
\]
By Lemma \ref{RSBound4} and the inequality $|\Lambda_{f}(p^{\ell})| \leq 2 \log p$, it equals
\begin{equation*}
\begin{split}
\sum_{p^{\ell}\leq x} \frac{\Lambda_{f}(p^{\ell})}{p^{\ell}} \sum_{n\leq x/p^{\ell}} \frac{|\lambda_{f}(n)|^{2} \overline{\lambda_{f}(p^{\ell})}}{n} \log(n) \log(p^{\ell}n) 
 +O\!\left(  \log^{3} x \sum_{p^{\ell}\leq x} \frac{k \log p }{p^{\ell+1}}  \right).
 \end{split}
  \end{equation*}
The error term here is $O(\log^{3} x)$.  Using the estimates $|\Lambda_{f}(p^{\ell})|\leq 2 \log p$ and $|\lambda_{f}(p^{\ell})|\leq \ell+1$, 
it can be shown that the prime powers $p^{\ell}$ with $\ell \ge 2$ may be removed from the above sum over $n$ with an error of $O( \log^3 x )$. 
Since $\Lambda_{f}(p)=\lambda_{f}(p)\log p$, it follows that 
  \begin{equation*}
\begin{split}
\sum_{mn\leq x} \frac{\Lambda_{f}(m)\alpha_f(n) \overline{\alpha_f(mn)}}{mn} &= \sum_{p\leq x} \frac{\Lambda_{f}(p)}{p} \sum_{n\leq x/p} \frac{|\lambda_{f}(n)|^{2} \overline{\lambda_{f}(p)}}{n} \log(n) \log(pn) + O(\log^3 x)
\\
& = \sum_{p\leq x} \frac{|\lambda_{f}(p)|^2 \log p}{p} \sum_{n\leq x/p} \frac{|\lambda_{f}(n)|^{2}}{n} \log(n) \log(pn) + O(\log^3 x).
\end{split}
\end{equation*}
By two applications of Stieltjes integration along with the estimates in \eqref{eq:ransel} and \eqref{eq:prime1},  the sum on the right-hand side of the above expression equals 
\begin{align*}
   \sum_{p \le x}  \frac{ |\lambda_f(p)|^2 \log p}{p} & \int_{1^{-}}^{\frac{x}{p}} \frac{(\log t) (\log p + \log t)}{t} d\mathscr{A}_f(t)  \\
  & =  c_f   \sum_{p \le x} \frac{ |\lambda_f(p)|^2 \log p}{p} \left\{ \frac{1}{2} \log^2(\tfrac{x}{p}) \log p + \frac{1}{3} \log^3(\tfrac{x}{p}) \right\}  +O(\log^3 x)
  \\
  & = c_f \int_{1^{-}}^x \left\{ \frac{1}{2} \log^2(\tfrac{x}{u}) \log u + \frac{1}{3} \log^3(\tfrac{x}{u}) \right\} d\mathscr{B}_f(u) +O(\log^3 x)
  \\
   & = c_f \int_{1}^x \left\{ \frac{1}{2} \log^2(\tfrac{x}{u}) \log u + \frac{1}{3} \log^3(\tfrac{x}{u}) \right\} \frac{du}{u} +O(\log^3 x)
  \\
  & = c_f \left\{ \frac{1}{24} + \frac{1}{12} \right\} \log^{4}x+ O(\log^3 x)
  \\
&= \frac{1}{8} c_f  \log^{4}x+ O(\log^3 x).
\end{align*}
By combining estimates, we prove \eqref{first}.

\smallskip

The proof of \eqref{second} is similar, so we simply sketch the details. By again applying Lemma \ref{RSBound4} and then removing the prime powers, it can be shown that the sum on the left-hand side of \eqref{second} equals  
\begin{align*}
\sum_{p\leq x} \frac{\Lambda_{f}(p)\overline{\lambda_{f}(p)}}{p} \sum_{n\leq x/p} \frac{|\lambda_{f}(n)|^{2}}{n}  \log\Big(\frac{x^{2}}{n}\Big) \log\Big(\frac{x^{2}}{pn}\Big)   + O(\log^3 x). 
\end{align*}
By partial summation, using \eqref{eq:ransel} and  \eqref{eq:prime1}, and the identity $\Lambda_{f}(p)=\lambda_{f}(p)\log p$ this equals
\begin{align*}
 c_f \sum_{p\leq x} &\frac{|\lambda_{f}(p)|^2 \log p}{p}  \int_{1}^{\frac{x}{p}}  \log\Big(\frac{x^{2}}{t}\Big) \log\Big(\frac{x^{2}}{pt}\Big) \frac{dt}{t} + O(\log^3x )
 \\
 &= c_f \sum_{p\leq x} \frac{|\lambda_{f}(p)|^2 \log p}{p}  \int_{x}^{\frac{x^2}{p}}  \log(pu) \log(u)  \frac{du}{u} + O(\log^3x )
 \\
 &= c_f \sum_{p\leq x} \frac{|\lambda_{f}(p)|^2 \log p}{p} \left\{ \frac{1}{3} \log^3\Big(\frac{x^2}{p}\Big)+\frac{1}{2} \log^2\Big(\frac{x^2}{p}\Big)\log p- \Big(\frac{\log p}{2}+\frac{\log x}{3} \Big) \log^2x \right\} 
 \\
 &\quad \quad + O(\log^3x )
 \\
 &= c_f \int_1^x \left\{ \frac{1}{3} \log^3\Big(\frac{x^2}{v}\Big)+\frac{1}{2} \log^2\Big(\frac{x^2}{v}\Big)\log v- \Big(\frac{\log v}{2}+\frac{\log x}{3} \Big) \log^2x \right\} \frac{dv}{v}
 \\
 &\quad \quad  + O(\log^3x )
 \\
& = c_f \left\{ \frac{5}{4} + \frac{11}{24} - \Big(\frac{1}{4} + \frac{1}{3}\Big) \right\}\log^{4}x+ O(\log^3 x)
\\
& = \frac{9}{8} c_f \log^4 x  + O(\log^3x ).
\end{align*}
This establishes \eqref{second}, and thus completes the proof of Lemma \ref{lambda_prime}.
\end{proof}

The final lemma of this section deals with the order of poles arising in certain Dirichlet series which occur in the error term in Proposition \ref{dmva}. 
\begin{lemma} \label{pole}
For $f\in H_k(q,\chi)$ and $X \ge 2$,  the inequalities
\begin{align}
    \sum_{n=1}^\infty \frac{\big|(\Lambda_f\!*\!\lambda_f)(n)\big|^2}{n^\sigma} & \ll \frac{1}{(\sigma-1)^3},  \label{eq:pole1} \\
    \sum_{n=1}^\infty \frac{\big|(\Lambda_f\!*\!\alpha_f)(n)\big|^2}{n^\sigma} & \ll \frac{1}{(\sigma-1)^5}, \ \text{ and}  \label{eq:pole2} \\
     \sum_{n=1}^\infty \frac{\big|(\Lambda_f\!*\! \beta_{f,X})(n)\big|^2}{n^\sigma}  & \ll  \frac{ (\log X)^2}{(\sigma-1)^3}  +
      \frac{1}{(\sigma-1)^5} \label{eq:pole3}
\end{align}
hold uniformly as $\sigma\rightarrow 1^+$.
\end{lemma}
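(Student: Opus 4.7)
My plan is to handle the three estimates in a unified way, exploiting elementary identities and pointwise bounds (no deeper use of the analytic theory of $L$-functions is required here). The first estimate follows directly from a Dirichlet series identification; the second uses $|\Lambda_f(n)| \le 2\Lambda(n)$ together with a Cauchy--Schwarz step in the convolution; and the third reduces to the first two via a simple linear decomposition of $\beta_{f,X}$.

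For \eqref{eq:pole1}, I first observe that formally
$$\sum_{n=1}^\infty \frac{(\Lambda_f*\lambda_f)(n)}{n^s} \;=\; \Bigl(-\tfrac{L'}{L}(s,f)\Bigr) L(s,f) \;=\; -L'(s,f) \;=\; \sum_{n=1}^\infty \frac{\lambda_f(n)\log n}{n^s}$$
for $\Re(s)>1$, hence $(\Lambda_f*\lambda_f)(n)=\lambda_f(n)\log n$ by unique coefficient comparison. Then $|(\Lambda_f*\lambda_f)(n)|^2 = |\lambda_f(n)|^2 \log^2 n$, and partial summation applied to the Rankin--Selberg estimate $\mathscr{A}_f(x)=c_f x + O(x^{3/5})$ from \eqref{eq:ransel} gives
$$\sum_{n=1}^\infty \frac{|\lambda_f(n)|^2 \log^2 n}{n^\sigma} \ll \int_1^\infty \frac{\log^2 u}{u^\sigma}\, du \;\ll\; \frac{1}{(\sigma-1)^3}.$$

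For \eqref{eq:pole2}, I first use the pointwise bound $|\Lambda_f(n)|\le 2\Lambda(n)$ from \eqref{eq:lambdafbound} to write $|(\Lambda_f*\alpha_f)(n)|\le 2\sum_{d\mid n}\Lambda(d)|\alpha_f(n/d)|$, and then apply Cauchy--Schwarz with the weights $\sqrt{\Lambda(d)}$, using $\sum_{d\mid n}\Lambda(d)=\log n$, to obtain
$$|(\Lambda_f*\alpha_f)(n)|^2 \;\le\; 4\log n\,\cdot\,(\Lambda*|\alpha_f|^2)(n).$$
Multiplying by $n^{-\sigma}$, summing, and splitting $\log n=\log d+\log m$ (where $n=dm$) separates the sum into a product of Dirichlet series, each of which is classical:
$$\sum_d \frac{\Lambda(d)}{d^\sigma} \ll \frac{1}{\sigma-1}, \qquad \sum_d \frac{\Lambda(d)\log d}{d^\sigma}\ll \frac{1}{(\sigma-1)^2},$$
while the step of \eqref{eq:pole1} (combined with one more differentiation) yields
$$\sum_m \frac{|\alpha_f(m)|^2}{m^\sigma}\ll \frac{1}{(\sigma-1)^3}, \qquad \sum_m \frac{|\alpha_f(m)|^2\log m}{m^\sigma} \ll \frac{1}{(\sigma-1)^4}.$$
Combining the cross terms yields the desired bound $1/(\sigma-1)^5$.

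Finally, for \eqref{eq:pole3}, I observe that $\log(X^2/n) = 2\log X - \log n$ gives the decomposition $\beta_{f,X}(n) = -2(\log X)\lambda_f(n) - \alpha_f(n)$, so
$$(\Lambda_f*\beta_{f,X})(n) \;=\; -2(\log X)\,(\Lambda_f*\lambda_f)(n) \;-\; (\Lambda_f*\alpha_f)(n).$$
Applying $|a+b|^2\le 2(|a|^2+|b|^2)$ and the previous two bounds immediately produces the asserted estimate
$$\sum_{n=1}^\infty \frac{|(\Lambda_f*\beta_{f,X})(n)|^2}{n^\sigma} \;\ll\; \frac{(\log X)^2}{(\sigma-1)^3} + \frac{1}{(\sigma-1)^5}.$$
The only step that is not completely routine is the Cauchy--Schwarz manoeuvre for \eqref{eq:pole2}, where one must be careful to use the correct pointwise bound on $\Lambda_f$ rather than attempting to analyze $L'(s,f)^2/L(s,f)$ directly (which would bring the zeros of $L(s,f)$ into play and is not needed for what amounts to an upper bound on the growth of the squared-coefficient Dirichlet series).
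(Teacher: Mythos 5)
Your proof is correct, and for \eqref{eq:pole1} and \eqref{eq:pole2} it takes a genuinely different and more elementary route than the paper. The paper proceeds by introducing $g_{f,z}(n)=\lambda_f(n)n^{-z}$ and $\mu_f$, writing the two Dirichlet series as derivatives (via the operators $D_1$, $D_2$) of a four-variable Rankin--Selberg ratio $G_f(s;w_1,w_2,z_1,z_2)$ built from factors $L(s+\cdot,f\times\bar f)$, and reading off the polar orders $(s-1)^{-3}$ and $(s-1)^{-5}$ from that factorization; this yields precise asymptotics (with explicit leading constants $2c_f\Pi_f(1)$ and $7c_f\Pi_f(1)$), of which the stated inequalities are a corollary. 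You instead observe the exact identity $(\Lambda_f*\lambda_f)(n)=\lambda_f(n)\log n$ (coefficient comparison in $(-L'/L)\cdot L=-L'$, valid by absolute convergence for $\Re(s)>1$), so \eqref{eq:pole1} is just partial summation against $\mathscr{A}_f(x)=c_fx+O(x^{3/5})$; and for \eqref{eq:pole2} you use $|\Lambda_f(n)|\le 2\Lambda(n)$ together with Cauchy--Schwarz over divisors weighted by $\Lambda(d)$ and $\sum_{d\mid n}\Lambda(d)=\log n$, giving $|(\Lambda_f*\alpha_f)(n)|^2\le 4\log n\,(\Lambda*|\alpha_f|^2)(n)$, after which the sum factors into $\zeta'/\zeta$-type series of order $(\sigma-1)^{-1}$, $(\sigma-1)^{-2}$ against moments $\sum|\lambda_f(m)|^2(\log m)^{j}m^{-\sigma}\ll(\sigma-1)^{-j-1}$ for $j=2,3$, correctly producing $(\sigma-1)^{-5}$. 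Your treatment of \eqref{eq:pole3} via $\beta_{f,X}=-2(\log X)\lambda_f-\alpha_f$ and $|a+b|^2\le 2|a|^2+2|b|^2$ coincides with the paper's. What your argument buys is brevity and minimal input (only Deligne's bound, $|\Lambda_f|\le2\Lambda$, and \eqref{eq:ransel}); what it gives up is the asymptotic information encoded in the paper's Euler-product computation, but since the lemma only asserts upper bounds, and upper bounds are all that is used in the proof of Proposition \ref{mainterms}, nothing is lost for the application.
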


\begin{proof}
We begin by proving the estimates in \eqref{eq:pole1} and \eqref{eq:pole2}; the estimate in \eqref{eq:pole3} will be deduced as a consequence of these results.
For $f \in H_k(q,\chi)$ and $z$  a complex variable we define the arithmetic function
\[
  g_{f,z}(n) =\lambda_f(n) n^{-z}. 
\]
Note that 
\begin{equation}
  \label{arithidentity1}
 \lambda_f(n) = g_{f,0}(n)  \quad \text{ and } \quad \alpha_f(n) = -\lambda_f(n)\log n = \frac{d}{dz} \big[ g_{f,z}(n) \big]\Big|_{z=0}. 
\end{equation}
and
\begin{equation}
  \label{arithidentity2}
 \Lambda_f(n) = -(\mu_f\!*\!\alpha_f)(n) = -\frac{d}{dz} \big[ (\mu_f\!*\!g_{f,z})(n) \big]\Big|_{z=0}
\end{equation}
where  $\mu_f(\cdot)$ is defined by the generating series
\begin{equation}
  \label{eq:Lminus1}
 \frac{1}{L(s,f)} = \sum_{n=1}^\infty \frac{\mu_f(n)}{n^s}
\end{equation}
which converges absolutely for $\Re(s)>1$. 
Let $z_1,z_2,w_1,w_2$ be complex variables,  assume that 
\[
   \max_{i,j \in \{1,2\}}(|z_i|,|w_j|) \le \varepsilon
\]
for some small $\varepsilon>0$, and define 
$$ G_f(s) := G_f(s;w_1,w_2,z_1,z_2)= \sum_{n=1}^\infty \frac{(\mu_f\!*\!g_{f,w_1}\!*\!g_{f,z_1})(n)(\mu_{\overline{f}}\!*\!g_{\overline{f},w_2}\!*\!g_{\overline{f},z_2})(n)}{n^s}.$$
It follows from \eqref{arithidentity1} and \eqref{arithidentity2} that, for $\Re(s) > 1$,
\[
     \sum_{n=1}^\infty \frac{\big|(\Lambda_f\!*\!\lambda_f)(n)\big|^2}{n^s}  =
      \frac{d}{dw_1}\frac{d}{dw_2} G_f(s;w_1,w_2,0,0) \Bigg|_{w_1=w_2=0}
\]
and that
$$ \sum_{n=1}^\infty \frac{\big|(\Lambda_f\!*\!\alpha_f)(n)\big|^2}{n^s} = \frac{d}{dw_1}\frac{d}{dw_2}\frac{d}{dz_1}\frac{d}{dz_2} 
G_f(s;w_1,w_2,z_1,z_2) \Bigg|_{\substack{w_1=w_2=0 \\ z_1=z_2=0}}.$$
By the multiplicativity of the coefficients of $G_f(s)$, it follows that
\[
   G_f(s)=\prod_{p\text{ prime}} \! \left(1+ \frac{(\mu_f\!*\!g_{f,w_1}\!*\!g_{f,z_1})(p)(\mu_{\overline{f}}\!*\!g_{\overline{f},w_2}\!*\!g_{\overline{f},z_2})(p)}{p^s} + \cdots \right).
\]   
We now re-write this Euler product in terms of the Rankin-Selberg type convolution Dirichlet series 
\[
L_f(s):=L(s,f\!\times \!\bar{f}) = \sum_{n=1}^{\infty} \frac{ |\lambda_f(n)|^2}{ n^{s}}.
\] 
 Since $\mu_f(1)=1$, $\mu_f(p)=-\lambda_f(p)$, and $g_{f,z}(1)=1$ for each prime $p$ and every complex number $z$, it follows that
$
(\mu_f\!*\!g_{f,w}\!*\!g_{f,z})(p) 
= \lambda_f(p)\left( p^{-z}+p^{-w}-1\right)$
and thus for each prime $p$
$$ (\mu_f\!*\!g_{f,w_1}\!*\!g_{f,z_1})(p)(\mu_{\overline{f}}\!*\!g_{\overline{f},w_2}\!*\!g_{\overline{f},z_2})(p) = |\lambda_f(p)|^2\left( 1\!-\!p^{-w_1}\!-\!p^{-z_1}\right)\left( 1\!-\!p^{-w_2}\!-\! p^{-z_2}\right).$$
By \eqref{eq:Lminus1}, we have $|\mu_f(n)| \le d(n)$. Since $|\lambda_f(n)| \le d(n)$, it follows that 
\[
|g_{f,w}(n)| \le d(n) n^{-\Re(w)} \le d(n) n^{\varepsilon}
\]
and thus 
\[
     |(\mu_f\!*\!g_{f,w_i}\!*\!g_{f,z_i})(n)|  \le n^{2 \varepsilon} (d*d*d)(n) \ll n^{3 \varepsilon}. 
\]
Therefore
\begin{equation*}
\begin{split}
G_f(s) &=\prod_{p}  \left(1+ \frac{|\lambda_f(p)|^2\left( 1\!-\!p^{-w_1}\!-\!p^{-z_1}\right)\left( 1\!-\!p^{-w_2}\!-\! p^{-z_2}\right)}{p^s} + O ( p^{-2 \Re(s)+12 \varepsilon} )  \right)
\\
&=\prod_{p}  \Big(1\!+\!\frac{|\lambda_f(p)|^2}{p^s}\Big( 1\!-\! \frac{1}{p^{w_1}}\!-\!\frac{1}{p^{w_2}}\!-\!\frac{1}{p^{z_1}}\!-\!\frac{1}{p^{z_2}} +\cdots
\\
& \quad \quad \quad  \quad\quad  \quad \quad\quad\quad \quad  \quad \cdots\!+\!\frac{1}{p^{w_1+w_2}}\!+\!\frac{1}{p^{w_1+z_2}}\!+\!\frac{1}{p^{z_1+w_2}}\!+\!\frac{1}{p^{z_1+z_2}}\Big)\! 
+ O ( p^{-2 \Re(s)+12 \varepsilon} ) \Big)
\\
&= \frac{L_f(s)L_f(s\!+\!w_1\!+\!w_2)L_f(s\!+\!w_1\!+\!z_2)L_f(s\!+\! z_1\!+\!w_2) L_f(s\!+\!z_1\!+\!z_2)}{L_f(s\!+\!w_1)L_f(s\!+\!w_2)L_f(s\!+\!z_1)L_f(s\!+\!z_2) } \Pi_f(s)
\end{split}
\end{equation*}
where $\Pi_f(s)=\Pi_f(s;w_1,w_2,z_1,z_2)$ is an Euler product which converges absolutely in a half-plane containing the line $\Re(s)=1$ so long as 
 $\varepsilon \le  \tfrac{1}{100},$
say. Since $ L_f(s) \sim c_f(s\!-\!1)^{-1}$ 
near $s=1$, (this follows from \eqref{eq:ransel} of Lemma \ref{sums}) it follows that
$$ G_f(s) \sim  \frac{c_f (s\!+\!w_1\!-\!1)(s\!+\!w_2\!-\!1) (s\!+\!z_1\!-\!1)(s\!+\!z_2\!-\!1)\Pi_f(s;z_1,z_2,w_1,w_2)}{(s\!-\!1)(s\!+\!w_1\!+\!w_2\!-\!1)(s\!+\!w_1\!+\!z_2\!-\!1) (s\!+\! z_1\!+\!w_2\!-\!1) (s\!+\!z_1\!+\!z_2\!-\!1)} $$
when $s$ is near 1 and $w_1, w_2, z_1,$ and $z_2$ are all near 0. Introducing the differential operators
\[
  D_1(\cdot) = \left. \frac{d}{dw_1} \frac{d}{dw_2}\Big(\cdot\Big) \right|_{ w_1=w_2=0 }
\quad \text{ and } \quad
  D_2(\cdot) = \left. \frac{d}{dz_1} \frac{d}{dz_2} \frac{d}{dw_1} \frac{d}{dw_2}\Big(\cdot\Big) \right|_{ \substack{ w_1=w_2=0 \\  z_1=z_2=0} },
\]
it can be shown that
\begin{align*}
  D_1\!\left(  \frac{(s\!+\!w_1\!-\!1)(s\!+\!w_2\!-\!1)}{(s\!+\!w_1\!+\!w_2\!-\!1)(s\!+\!w_1\!-\!1) (s\!+\!w_2\!-\!1) } \right)\!=\! \frac{2}{(s\!-\!1)^3}
\end{align*}
and that
\begin{align*}
  D_2\!\left(  \frac{(s\!+\!w_1\!-\!1)(s\!+\!w_2\!-\!1) (s\!+\!z_1\!-\!1)(s\!+\!z_2\!-\!1)}{(s\!-\!1)(s\!+\!w_1\!+\!w_2\!-\!1)(s\!+\!w_1\!+\!z_2\!-\!1) (s\!+\! z_1\!+\!w_2\!-\!1) (s\!+\!z_1\!+\!z_2\!-\!1)} \right)\!=\! \frac{7}{(s\!-\!1)^5}. 
\end{align*}
Thus, near $s=1$, we have shown that
$$ \sum_{n=1}^\infty \frac{\big|(\Lambda_f\!*\!\lambda_f)(n)\big|^2}{n^s}  \sim \frac{2 c_f}{(s\!-\!1)^3}\Pi_f(1;0,0,0,0)$$
and
$$ \sum_{n=1}^\infty \frac{\big|(\Lambda_f\!*\!\alpha_f)(n)\big|^2}{n^s}  \sim \frac{7 c_f}{(s\!-\!1)^5}\Pi_f(1;0,0,0,0)$$
from which \eqref{eq:pole1} and \eqref{eq:pole2} now follow.  

\smallskip

It remains to establish the estimate in \eqref{eq:pole3}. Noting that $\beta_{f,X}(n) = - 2(\log X) \lambda_f(n) - \alpha_f(n)$,  it follows from the elementary inequality $|a+b|^2 \leq 2|a|^2 + 2|b|^2$ that
\begin{align*}
  \sum_{n=1}^\infty \frac{\big|(\Lambda_f\!*\!\beta_{f,X})(n)\big|^2}{n^{\sigma}}
  & \le 8 (\log X)^2  \sum_{n=1}^\infty \frac{\big|(\Lambda_f\!*\!\lambda_f)(n)\big|^2}{n^{\sigma}} + 2 \sum_{n=1}^\infty \frac{\big|(\Lambda_f\!*\!\alpha_f)(n)\big|^2}{n^{\sigma}} \\
  & \ll (\log X)^{2}  (\sigma-1)^{-3} + (\sigma-1)^{-5}
\end{align*}
using \eqref{eq:pole1} and \eqref{eq:pole2}. This completes the proof of the lemma.
\end{proof}

\section{Proofs of Proposition \ref{mainterms} and Proposition \ref{errordmv} } 

In this section, we deduce Proposition \ref{mainterms} and Proposition \ref{errordmv} from
Proposition \ref{dmva} and Proposition \ref{dmvbd}  in conjunction with the various results on sums of the arithmetic functions $\lambda_f(n)$ 
and $\Lambda_f(n)$ that were established in \textsection 5. Combined with the analysis in \textsection 1.3, this completes the proof of Theorem \ref{2nd_moment}.

\smallskip

\begin{proof}[Proof of Proposition \ref{mainterms}]
Let $T$ be large. We first establish the estimate
\begin{equation}\label{1stgoal}
 \sum_{T<\gamma_{\!_f}\leq 2T} \Bigg|\sum_{n\leq X} \frac{\alpha_f(n)}{n^{\rho_{\!_f}}} \Bigg|^2 =  \frac{5 }{24\pi} c_f  T  \log^4 X + O\Big( T (\log T)^{4-2\delta} \Big).
 \end{equation}
 By \eqref{eq:defncoeffs}, \eqref{eq:absval}, and \eqref{eq:ransel},  we have 
 \begin{align*}
   \sum_{n \le x}  |\alpha_f(n)|  \ll x (\log xT) (\log x)^{-\delta}
 \end{align*}
and
 \begin{align*}
  \sum_{n \le x} |\alpha_f(n)|^2  \ll x (\log xT)^2 \ll 
 \end{align*}
for $x \ll T$. Thus, conditions \eqref{eq:coefficientscondition1} and \eqref{eq:coefficientscondition2} are satisfied. 
We now apply Proposition \ref{dmva} with the choices $a(n) =\alpha_f(n)$, $Y=X$, and $\eta = \delta$ where $\delta$ is given by \eqref{eq:delta}.
It follows that
\begin{equation}\label{1ststep}
   \sum_{T<\gamma_{\!_f}\leq 2T} \Bigg|\sum_{n\leq X} \frac{\alpha_f(n)}{n^{\rho_{\!_f}}} \Bigg|^2   = \frac{c_f}{\pi} \left( \frac{1}{3}-\frac{1}{8} \right) T  \log  ^4X 
    + O\Big(  T (\log T)^{4-2 \delta} + T (\log T)^{\frac{7}{2}} \Big)
\end{equation}
since, by Lemma \ref{rankinselbergpartialsum}, Lemma \ref{lambda_prime}, and Lemma \ref{pole}, we have 
$$ \sum_{n\leq X} \frac{|\alpha_f(n)|^{2}}{n} =\frac{1}{3}c_{f} \log^3 X + O\big(\log^2 T\big), $$
$$ \sum_{n \le X}  \frac{(\Lambda_f*\alpha_f )(n) \overline{\alpha_f(n)}}{n} =\frac{1}{8} c_f  \log^4 X + O\big(\log^3 T\big) , $$
and
$$  \sum_{n=1}^{\infty} \frac{|(\Lambda_f*\alpha_f)(n)|^2}{n^{2c-1}}  \ll \frac{1}{(2c{-}1)^{5}} \ll \log^5 T.$$
Noting that  $\frac{7}{2} < 4-2 \delta$ we obtain \eqref{1stgoal} from \eqref{1ststep}. 

\smallskip

The estimate
\begin{equation}\label{2ndgoal}
\sum_{T<\gamma_{\!_f}\leq 2T} \Bigg|\sum_{n\leq X} \frac{\beta_{f,X}(n)}{n^{\rho_{\!_f}}} \Bigg|^2 = \frac{29 }{24\pi} c_f  T  \log^4 X + O\Big( T (\log T)^{4-2\delta} \Big)
\end{equation}
is similar.    By \eqref{eq:defncoeffs}, \eqref{eq:absval}, and \eqref{eq:ransel},  we have
 \[
   \sum_{n \le x}  |\beta_{f,X}(n)| \ll x (\log xT) (\log x)^{-\delta} \quad \text{ and } \quad \sum_{n \le x} |\beta_{f,X}(n)|^2 \ll x (\log xT)^2
 \]
 for $x \ll T$.
 Thus we may apply  Proposition \ref{dmva} with $a(n) = \beta_{f,X}(n)$, $Y=X$, and $\eta = \delta$ to obtain
\begin{equation}\label{2ndstep}
    \sum_{T<\gamma_{\!_f}\leq 2T} \Bigg|\sum_{n\leq X} \frac{\beta_{f,X}(n)}{n^{\rho_{\!_f}}} \Bigg|^2 = \frac{c_f}{\pi} \left( \frac{7}{3}-\frac{9}{8} \right) T \log^4 X 
    + O\Big(  T (\log T)^{4-2 \delta} + T (\log T)^{\frac{7}{2}} \Big)
 \end{equation}
since, by Lemma \ref{rankinselbergpartialsum}, Lemma \ref{lambda_prime}, and Lemma \ref{pole}, we have
$$  \sum_{n\leq X} \frac{|\beta_{f,X}(n)|^{2}}{n}  =\frac{7}{3}c_{f}  \log^3 X + O\big(\log^{2} T\big), $$
$$ \sum_{n \le X}  \frac{(\Lambda_f* \beta_{f,X} )(n) \overline{\beta_{f,X}(n)}}{n}  =\frac{9}{8} c_f  \log^4 X + O\big(\log^3 T\big),$$
and
$$ \sum_{n=1}^{\infty} \frac{|(\Lambda_f*\beta_{f,X})(n)|^2}{n^{2c-1}}  \ll \frac{\log^2 T}{(2c{-}1)^{3}} + \frac{1}{(2c{-}1)^{5}} \ll \log^5 T.$$
Here, as was the case in the previous estimate, we deduce \eqref{2ndgoal} from \eqref{2ndstep} by using the fact that $\delta \leq \frac{1}{4}$. 
\end{proof}

\begin{proof}[Proof of Proposition \ref{errordmv}]
Using the notation of Lemma \ref{approx}, we will prove that
\[
\sum_{T<\gamma_{\!_f}\leq 2T} \big|\mathcal{E}_j \big(\rho_{\!_f}, f) \big|^2 = O\left( \frac{T \log^4 T}{\log\log T}\right) 
\]
for each $j=1,\ldots,5$. From these five estimates, Proposition \ref{errordmv} then follows in an obvious manner. 

\smallskip

In order to obtain this result, we shall apply Proposition \ref{dmvbd} to each of the above five sums. 
Recalling the definitions in \eqref{eq:Esf}, we write $\mathcal{E}_j(\frac{1}{2}+it,f)= \sum_{n=1}^{\infty} b_j(n) n^{-it}$ for $j=1,2,3$ 
with the coefficients
\[
  b_j(n)  = \begin{cases}
                       \alpha_f(n) (e^{-n/X}{-}1) n^{-\frac{1}{2}} I_{[1,X]}(n), & \text{ if } j=1, \\
                     \alpha_f(n) e^{-n/X} n^{-\frac{1}{2}} I_{(X,\infty]}(n), & \text{ if } j=2, \text{ or} \\
                       \lambda_{f}(n) n^{-\frac{1}{2}} I_{[1,X]}(n), &  \text{ if } j=3,
                     \end{cases}
\]
respectively. Here $I_{J}(t)$ is the indicator function of the interval $J$. 
With these choices of coefficients, using Lemma \ref{rankinselbergpartialsum},  Proposition \ref{sums}, and the inequality $0\le 1{-} e^{-\frac{n}{X}} \leq \frac{n}{X}$, it is not hard to show that
$$ S_{1}  \ll T \log^2 T, \quad  S_2  \ll T \log^{4} T, \quad \text{and}  \quad S_3 \ll  \sqrt{T} (\log T)^{1-\delta}$$
where $S_1$, $S_2$, and $S_3$ are the requisite quantities in upper bound in Proposition \ref{dmvbd}. (In fact, when $j=3$, one can do a better by a factor of $\log T$ in each of the three estimates.) Thus, when $j=1, 2,$ or $3$ we deduce that
\begin{equation}\label{L123}
\sum_{T<\gamma_{\!_f}\leq 2T} \big|\mathcal{E}_j \big(\rho_{\!_f}, f) \big|^2  \ll  \frac{\log T}{\log \log T} \Big( \sqrt{S_1 S_2} + S_{3}^2\Big) + S_1 \log T 
  \ll \frac{ T \log^4 T}{\log \log T}.
\end{equation}

\smallskip

It remains to consider the cases $j=4$ and $j=5$. Recall that
\begin{equation}
  \begin{split}
  \label{eq:E4sf}
  &\mathcal{E}_4 \big(s, f) 
  \\
  &\quad = -\frac{  \varepsilon_{_f}}{2\pi i} \int_{-\frac{3}{4}-i\infty}^{-\frac{3}{4}+i\infty} \Gamma(w) X^w  \left(  
  \psi_{\!_f}'(s{+}w)\sum_{n>X} \frac{\lambda_{\bar f}(n)}{n^{1-s-w}}
  +   \psi_{\!_f}(s {+} w) \sum_{n>X} \frac{\lambda_{\bar f}(n) (\log n)}{n^{1-s-w}}
  \right) \ \! dw
  \end{split}
\end{equation}
and that 
\begin{equation}
\begin{split}
 \label{eq:E5sf}
   &\mathcal{E}_5 \big(s, f) 
   \\
   &\quad = -\frac{ \varepsilon_{_f}}{2\pi i} \int_{\frac{1}{4}-i\infty}^{\frac{1}{4}+i\infty} \Gamma(w) X^w 
   \left(  
  \psi_{\!_f}'(s{+}w)\sum_{n \le X} \frac{\lambda_{\bar f}(n)}{n^{1-s-w}}
  +   \psi_{\!_f}(s{+}w) \sum_{n \le X} \frac{\lambda_{\bar f}(n) (\log n)}{n^{1-s-w}}
  \right)
   \ \! dw.
   \end{split}
\end{equation}
We now estimate the tail of these integrals, allowing us to replace the right-hand sides of \eqref{eq:E4sf} and \eqref{eq:E5sf} by integrals with a short range of integration plus a small error term. In particular, the exponential decay of the gamma function allows us to restrict the integrals to the range $|\Re(w)| \leq \log^2T.$

\smallskip

For $s=\frac{1}{2} + i \gamma_f$ and $w = u + iv$ where $u=-\frac{3}{4}$ or $\frac{1}{4}$, Stirling's formula for the gamma function implies the uniform estimates  
\begin{equation}\label{11}  
|\Gamma(w)|  = 
\sqrt{2 \pi} |v|^{u-\frac{1}{2}} e^{-\frac{\pi}{2}|v|} \left(1+ O\Big(\tfrac{1}{|v|{+}1}\Big)\right), \qquad  \left| \frac{ \psi_{\!_f}'}{ \psi_{\!_f}}(s{+}w) \right|   \ll \log\Big(|v{+}\gamma_f|+2\Big) ,
\end{equation}
and
\begin{equation}
\label{33} 
 | \psi_{\!_f}(s{+}w)|   \ll \begin{cases}
    |v+\gamma_f|^{-2u}, &  \text{ if } |v+\gamma_f| \ge 1, \\
    1, & \text{ otherwise}. 
    \end{cases}
 \end{equation}  
We shall also make use of the trivial estimates
\begin{equation}\label{44}
     \Bigg| \sum_{n>X} \frac{\lambda_{\bar f}(n) (\log n)^j}{n^{\frac{7}{4}-\rho_{\!_f}-iv}} \Bigg| \ll T^{-\frac{1}{4}+\varepsilon}
\quad \text{and} \quad
     \Bigg| \sum_{n \le X} \frac{\lambda_{\bar f}(n) (\log n)^j}{n^{\frac{3}{4}-\rho_{\!_f}-iv}} \Bigg|  \ll T^{\frac{3}{4}+\varepsilon}
 \end{equation}
for every $\varepsilon>0$ and $j=0,1$, which follow from the $\text{RH}_f$ and the bound $|\lambda_f(n)|\le d(n)$. 
Using the estimates in \eqref{11},
\eqref{33}, and \eqref{44}, it follows that the tails of the integrals in \eqref{eq:E4sf} and \eqref{eq:E5sf} are bounded by a quantity which is
\begin{equation} \label{tail}
 \ll  T^{c_1} \int_{|v| \ge \log^2 T} |v|^{c_2} e^{-\frac{\pi}{2} |v|} \ \! dv  \ll T^{c_1}  \int_{|v| \ge \log^2 T}  e^{- \frac{\pi}{4} |v|} \ \! dv
   \ll T^{c_1} e^{- \frac{\pi}{4}(\log T)^2} \ll 1
\end{equation}
for some positive absolute constants $c_1$ and $c_2$. 

We now focus on bounding the parts of the integrals in \eqref{eq:E4sf} and \eqref{eq:E5sf} with $|\Re(w)| \le \log^2 T$. 
It is convenient to define
\begin{equation} 
  \label{eq:CjDjdefn}
\mathcal{C}_{j,v}(t) =  \sum_{n>X} \frac{\lambda_{\bar f}(n)(\log n)^j}{n^{\frac{5}{4} -iv}} n^{it}  \quad  \text{and} \quad   
\mathcal{D}_{j,v}(t) =  \sum_{n \le X} \frac{\lambda_{\bar f}(n)(\log n)^j}{n^{\frac{1}{4} -iv}} n^{it} 
\end{equation}
with $j=0,1$. Letting $s=\rho_{\!_f}=\frac{1}{2}+i\gamma_{\!_f}$ and $w=-\frac{3}{4}+iv $  in \eqref{eq:E4sf}
and then  applying  \eqref{11}, \eqref{33}, \eqref{44}, and \eqref{tail}, we obtain 
\begin{align*}
&  |\mathcal{E}_4 \big(\rho_{\!_f}, f)|  \ll 1 + T^{\frac{3}{4}} \int_{-\log^2 T}^{\log^2 T} e^{-\frac{\pi}{2}|v|} 
   \Big( | \mathcal{C}_{0,v}(\gamma_f)| \log T+  |\mathcal{C}_{1,v}(\gamma_f)| \Big) \ \! dv.
\end{align*}
Similarly, we find that
\begin{align*}
&  |\mathcal{E}_5 \big(\rho_{\!_f}, f)|  \ll 1 + T^{-\frac{1}{4}} \int_{-\log^2 T}^{\log^2 T} e^{-\frac{\pi}{2}|v|} 
  \Big( | \mathcal{D}_{0,v}(\gamma_f)| \log T+  |\mathcal{D}_{1,v}(\gamma_f)| \Big) \ \! dv.
\end{align*}
We now sum these estimates over the zeros $\rho_{\!_f}=\frac{1}{2}+i\gamma_{_f} $ satisfying $T<\gamma_{_f} \le 2T.$

\smallskip

We first estimate the sum involving the mean-square of $\mathcal{E}_4 \big(\rho_{\!_f}, f)$. Using the inequality $|a{+}b|^2 \leq 2 |a|^2 {+} 2|b|^2$, followed by an application of Cauchy's inequality, then interchanging the sum and the integral, and then using the first inequality again, we have
\begin{equation} \label{E4}
 \begin{split} 
&  \sum_{T<\gamma_{\!_f}\leq 2T}   |\mathcal{E}_4 \big(\rho_{\!_f}, f)|^2 
  \\
   &\quad \ll T \log T + T^{3/2}  \sum_{T<\gamma_{\!_f}\leq 2T}  \left(  \int_{-\log^2 T}^{\log^2 T}  e^{-\frac{\pi}{2} |v|}   \Big( | \mathcal{C}_{0,v}(\gamma_f)| \log T+  |\mathcal{C}_{1,v}(\gamma_f)| \Big) \ \! dv \right)^2
      \\
   &\quad \ll T \log T + T^{3/2}   \sum_{T<\gamma_{\!_f}\leq 2T} \int_{-\log^2 T}^{\log^2 T}  e^{-\frac{\pi}{2} |v|}   \Big( | \mathcal{C}_{0,v}(\gamma_f)| \log T+  |\mathcal{C}_{1,v}(\gamma_f)| \Big)^2  \ \! dv  
      \\
   &\quad \ll T \log T + T^{3/2}    \int_{-\log^2 T}^{\log^2 T}  e^{-\frac{\pi}{2} |v|}  \left( \sum_{T<\gamma_{\!_f}\leq 2T}  \Big( | \mathcal{C}_{0,v}(\gamma_f)|^2 \log^2 T+  |\mathcal{C}_{1,v}(\gamma_f)|^2 \Big) \right) \ \! dv
 \end{split} 
\end{equation}
where, to derive the inequality in the penultimate line, we used the fact that 
\begin{equation} \label{expint}
\int_{-\log^2 T}^{\log^2 T} e^{-\frac{\pi}{2} |v|} \ \! dv \leq   \int_{-\infty}^{\infty} e^{-\frac{\pi}{2} |v|} \ \! dv 
\ll 1.
\end{equation}
To estimate the sums over zeros involving  $| \mathcal{C}_{0,v}(\gamma_f)|^2$ and  $| \mathcal{C}_{1,v}(\gamma_f)|^2$, we apply Proposition \ref{dmvbd} with the coefficients
\[
b(n)=\lambda_f(n) (\log n)^j n^{-5/4+iv}I_{(X,\infty)}(n)
\] 
where $j=0,1$. In the notation of Proposition \ref{dmvbd}, using \eqref{eq:ransel}, \eqref{eq:absval}, and partial summation, we have
$$ S_1    \ll  \sum_{n > X}  \frac{|\lambda_f(n)|^2 (\log n)^{2j} }{n^{\frac{3}{2}}}  \ll \frac{(\log T)^{2j}}{T^{\frac{1}{2}}}, $$
$$  S_2  \ll  \sum_{n > X}  \frac{|\lambda_f(n)|^2 (\log n)^{2j+2} }{n^{\frac{3}{2}}} \ll \frac{(\log T)^{2j+2}}{T^{\frac{1}{2}}}, $$
and 
$$ S_3   \ll  \sum_{n > X} \frac{|\lambda_f(n)| (\log n)^j}{n^{\frac{5}{4}}}  \ll \frac{(\log T)^{j-\delta}}{T^{\frac{1}{4}}} $$
where $\delta\ge \frac{1}{18}$ is the constant defined \eqref{eq:absval}. Hence
\begin{align*}
     \sum_{T<\gamma_{\!_f}\leq 2T}   | \mathcal{C}_{j,v}(\gamma_{f})|^2 \ll 
       \frac{\log T}{\log  \log T} \Big( \sqrt{S_1 S_2} + S_{3}^2\Big) + S_1 \log T
    \ll \frac{(\log T)^{2j+2}}{T^{\frac{1}{2}} \log \log T}
\end{align*}
for $j=0,1$. Inserting these estimates into the integral on the right-hand side of \eqref{E4} and again using \eqref{expint}, we find that
\begin{equation} \label{E4bound}
   \sum_{T<\gamma_{\!_f}\leq 2T}  |\mathcal{E}_4 \big(\rho_{\!_f}, f)|^2  \ll \frac{ T \log^4 T }{\log \log T}.
\end{equation}

\smallskip

A similar argument shows that 
 \begin{align*}
  & \sum_{T<\gamma_{\!_f}\leq 2T}  |\mathcal{E}_5 \big(\rho_{\!_f}, f)|^2 
  \\
  &\qquad \ll T\log T + T^{-\frac{1}{2}}  \!\!\!
     \int_{-\log^2 T}^{\log^2 T}  e^{-\frac{\pi}{2} |v|}  \left( \sum_{T<\gamma_{\!_f}\leq 2T}  \Big( | \mathcal{D}_{0,v}(\gamma_f)|^2 \log^2 T+  |\mathcal{D}_{1,v}(\gamma_f)|^2 \Big) \right) \ \! dv.
\end{align*}
Once again we apply  Proposition \ref{dmvbd} with the coefficients
\[
   b(n) = \frac{\lambda_f(n)(\log n)^j}{n^{\frac{1}{4} -iv}}I_{[1,X]}(n).
\]
This yields 
\[
  S_1 \ll  T \sum_{n \le X} \frac{|\lambda_f(n)|^2 (\log n)^{2j}}{\sqrt{n}}
  \ll T^{\frac{3}{2}} (\log T)^{2j},
\]
\[
 S_2 \ll T \sum_{n \le X} \frac{|\lambda_f(n)|^2 (\log n)^{2j+2}}{\sqrt{n}}
  \ll T^{\frac{3}{2}} (\log T)^{2j+2},
\]
and 
\[
   S_3 \ll  \sum_{n \le X} \frac{|\lambda_f(n)|(\log n)^j}{n^{\frac{1}{4}}} \ll  T^{\frac{3}{4}}(\log T)^j. 
\]
Consequently, 
\[
 \sum_{T<\gamma_{\!_f}\leq 2T}   | \mathcal{D}_{j,v}(\gamma_{f})|^2  \ll 
   \frac{\log T}{\log  \log T} \Big( \sqrt{S_1 S_2} + S_{3}^2\Big) + S_1 \log T  \ll 
  \frac{T^{\frac{3}{2}} (\log T)^{2j+2}}{\log \log T}
\]
for $j=0,1$. Whence
\begin{equation} \label{E5}
   \sum_{T<\gamma_{\!_f}\leq 2T}  |\mathcal{E}_5 \big(\rho_{\!_f}, f)|^2  \ll \frac{ T \log^4 T}{\log \log T}.
\end{equation}
Combining the estimates \eqref{L123}, \eqref{E4bound}, and \eqref{E5}, we complete the proof of Proposition \ref{errordmv}. 
\end{proof}

\smallskip

\section{Proof of Theorem \ref{shifted} }

In this section, we prove the following lemma on the distribution of values of $\log|L(s,f)|$ near the zeros of $L(s,f)$ and then use this result to derive Theorem \ref{shifted}.

\begin{lemma} \label{value_dist}
Assume $\text{RH}_f$ and let $T$ be large. For $w\in\mathbb{C}$ satisfying $|w|\le 1$ and $0\le \Re(w) \le (\log T)^{-1}$, let $\mathcal{N}(V;T,w)$ be the number of zeros $\rho_{_f}=\frac{1}{2}+i\gamma_{_f}$ satisfying $T<\gamma_{_f}\le 2T$ and
$\log|L(\rho_{_f}\!+\!w,f)|\ge V.$ Then the following inequalities for $\mathcal{N}(V;T,w)$ hold:
\begin{enumerate}
\item[\textup{(}i\textup{)}] For $10\sqrt{\log\log T} \le V \le \log\log T$, we have
\begin{equation*}
\mathcal{N}(V;T,w) \ll T\log T \frac{V}{\sqrt{\log\log T}} \exp\left( -\frac{V^2}{\log\log T} \left(1-\frac{8}{\log\log\log T} \right) \right)
\end{equation*}
\item[\textup{(}ii\textup{)}] For $ \log\log T\ \le V \le \frac{1}{4}\log\log T \log\log\log T$, we have
\begin{equation*}
\mathcal{N}(V;T,w) \ll T\log T \frac{V}{\sqrt{\log\log T}} \exp\left( -\frac{V^2}{\log\log T}\left(1-\frac{18V}{5 \log\log T\log\log\log T}  \right)^2 \right)
\end{equation*}
\item[\textup{(}iii\textup{)}] For $ V \ge \frac{1}{4}\log\log T \log\log\log T$, we have
\begin{equation*}
\mathcal{N}(V;T,w) \ll T\log T \exp\left(- \frac{1}{201} \, V \log V\right).
\end{equation*}
\end{enumerate}
\end{lemma}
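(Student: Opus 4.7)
The plan is to adapt Soundararajan's method \cite{S} to the setting of averages over zeros of $L(s,f)$, as in \cite{M}, with the essential modification that the prime-square Dirichlet polynomial will be controlled via its own high moments over zeros (Proposition \ref{dmv:primes}) rather than pointwise as in \cite{S, M, SY}. The pointwise approach would require GRH for $L(s,\chi)$ and $L(s,\textup{sym}^2 f)$, whereas the moment approach does not.

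Fix a parameter $\mu \in [\mu_0, 1]$ and a choice of $x$ to be specified, and apply the inequality \eqref{prime_inequality} at $s = \rho_f + w$ with $\sigma_\mu = \tfrac12 + \mu/\log x$. The hypothesis $\sigma \le \sigma_\mu$ of Lemma \ref{1st_inequality} holds whenever $\mu \ge \Re(w)\log x$, which is automatic once $\mu \ge 1$ since $\Re(w)\log x \le \log x/\log T \le 1$. The inequality then yields
\[
\log|L(\rho_f + w, f)| \ \le\ \Re S_1(\rho_f) + \Re S_2(\rho_f) + (1+\mu)\frac{\log T}{\log x} + O(1),
\]
where $S_j(\rho_f)$ for $j=1,2$ denotes the Dirichlet polynomial over primes or prime squares appearing in \eqref{prime_inequality}, each evaluated at an effective shift of the form $\rho_f + \mu/\log x + i\Im(w)$. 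This shift has positive real part $\mu/\log x$, so Proposition \ref{dmv:primes} is applicable. Hence any zero counted by $\mathcal{N}(V;T,w)$ must satisfy $\Re S_1(\rho_f) \ge V_1$ or $\Re S_2(\rho_f) \ge V_2$, where $V_1 + V_2 = V - (1+\mu)\log T/\log x - O(1)$. Applying Chebyshev's inequality with $2m$-th and $2m'$-th moments respectively, then invoking Proposition \ref{dmv:primes} (trivially extended from $(0,T]$ to the dyadic interval $(T,2T]$), together with the estimate $\sum_{p \le x} |\lambda_f(p)|^2/p = \log\log x + O(1)$ from Proposition \ref{sums} \eqref{eq:prime2} and the bound $|\lambda_f(p^2){-}\chi(p)|^2 \le 4$ noted after Lemma \ref{1st_inequality} (so $\sum_p |\lambda_f(p^2) - \chi(p)|^2/p^2 \ll 1$), gives
\[
\mathcal{N}(V;T,w) \ \ll\ V_1^{-2m}\, m!\, N_f(T)\,(\log\log x + O(1))^m \ +\ V_2^{-2m'}\, m'!\, N_f(T)\, C^{m'}
\]
for any $m, m' \in \mathbb{N}$ satisfying $x^m \le T^{2/3}$ and $x^{m'/2} \le T^{2/3}$, with $C$ an absolute constant. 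The Stirling-optimal $m' \asymp V_2$ yields $\exp(-V_2\log V_2 + O(V_2))$ from the prime-square term, and choosing $V_2 \asymp V/\log\log\log T$ then makes this contribution strictly smaller than the prime contribution while keeping $V_1 = V(1 - O(1/\log\log\log T))$.

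The three regimes of $V$ in the statement correspond to how the cap $x^m \le T^{2/3}$ interacts with the Stirling-optimal $m \sim V_1^2/\log\log x$ for the prime term. In regime (i), one may take $x$ a fixed small power of $T$ (for instance $x = T^{1/\log\log T}$), the cap is not binding, and Stirling yields $\exp(-V_1^2/\log\log T)$ with prefactor $\sqrt{m} \asymp V/\sqrt{\log\log T}$; substituting $V_1 = V(1 - 4/\log\log\log T)$ produces the factor $(1 - 8/\log\log\log T)$. In regime (ii), the optimal $m$ would exceed the cap, so one is forced to take $x = T^{A/V}$ for an explicit constant $A$ and $m \le 2V/(3A)$; the constrained Stirling optimization produces the factor $(1 - 18V/(5\log\log T \log\log\log T))^2$ after judicious choice of $A$ and $\mu$. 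In regime (iii), $V$ is so large that no Gaussian-type bound survives and one instead balances the Stirling factors to obtain the pure exponential $\exp(-V\log V/201)$.

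The main obstacle will be the intricate constant-tracking in regime (ii), where $x$, $\mu$, $m$, $V_1$, and $V_2$ must all be chosen simultaneously and the error $(1+\mu)\log T/\log x$ carefully absorbed into $V_1$, in order to recover the specific constants $18/5$ and $201$ quoted. The shift $w$ itself causes no serious difficulty, since $|\Re(w)| \le 1/\log T$ allows the unshifted choice of $\sigma_\mu$ to remain valid, and the imaginary part is absorbed directly into the $w$-parameter of Proposition \ref{dmv:primes}. A subsidiary check, needed throughout, is that $V_2 \log V_2$ indeed dominates $V_1^2/\log\log T$, so that the $S_2$ contribution is never the binding constraint; this is straightforward given the polylogarithmic upper bound on $V$ in all three regimes.
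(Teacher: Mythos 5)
Your skeleton is the right one — apply the inequality \eqref{prime_inequality} at $\rho_{_f}+w$, bound the frequency of large values of each Dirichlet polynomial by Chebyshev together with the high moments over zeros from Proposition \ref{dmv:primes}, and in particular treat the prime-square sum by moments rather than pointwise — but there is a genuine gap: you keep the entire prime sum $\sum_{p\le x}$ as a single polynomial, whereas the paper (following Soundararajan) splits it at $z=x^{1/\log\log T}$ into a long piece $\mathcal{S}_1$ over $p\le z$ and a short piece $\mathcal{S}_2$ over $z<p\le x$, alongside the prime-square piece $\mathcal{S}_3$. This split is not bookkeeping; it is what makes your deferred ``constrained Stirling optimization'' possible at all. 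For the full sum of length $x=T^{A/V}$ the moment estimate \eqref{1st_prime_sum} is available only for $x^m\le T^{2/3}$, i.e.\ $m\le 2V/(3A)$, while the term $(1+\mu)\log T/\log x=(1+\mu)V/A$ in Lemma \ref{1st_inequality} forces $A\to\infty$ if the threshold $V_1$ is to be $V(1-o(1))$. Test this at $V=\log\log T$ (top of range (i), bottom of range (ii)), where the claimed bounds require an exponent $(1-o(1))V^2/\log\log T=(1-o(1))V$: with the cap binding, Chebyshev gives at best $\exp\bigl(-\tfrac{2V}{3A}\log\tfrac{3eA V_1^2}{2V\log\log x}\bigr)\le\exp\bigl(-\tfrac{2V}{3A}\log\tfrac{3eA}{2}+o(V)\bigr)$, and the function $\tfrac{2}{3A}\log\tfrac{3eA}{2}$ has maximum $1$ at $A=2/3$ and is $o(1)$ as $A\to\infty$. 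So either $A$ is bounded, in which case $(1+\mu)V/A$ is a positive proportion of $V$ (indeed exceeds $V$ when $A\le1+\mu_0$) and $V_1$ is not $V(1-o(1))$, or $A\to\infty$ and the exponent is $o(V)$; in neither case can you reach the factors $(1-8/\log\log\log T)$ or $(1-\tfrac{18V}{5\log\log T\log\log\log T})^2$. The paper escapes precisely because $\mathcal{S}_1$ has length only $z$, so the admissible number of moments is $\ell\le 2V\log\log T/(3A)$ — larger by a factor $\log\log T$ — and the Gaussian-optimal $\ell\approx V_1^2/\log\log T$ is admissible throughout ranges (i)--(ii) even with $A\asymp\log\log\log T$ or $A\asymp\log\log T\log\log\log T/V$, which keeps $(1+\mu)V/A=o(V)$; the leftover $\mathcal{S}_2$ is harmless because its mean square is only $\log\log x-\log\log z+O(1)\asymp\log\log\log T$, so $\ell\asymp V/A$ moments already give $\exp(-\tfrac{V}{3A}\log V)$.

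Two of your concrete choices also fail quantitatively. In range (i) the suggestion $x=T^{1/\log\log T}$ makes $(1+\mu)\log T/\log x=(1+\mu)\log\log T>V$ for every $V\le\log\log T$, so $V_1+V_2<0$ and the method yields nothing there (a fixed power $x=T^\theta$ is worse still, since then $x^m\le T^{2/3}$ caps $m$ at a constant). And the allocation $V_2\asymp V/\log\log\log T$ for the prime squares gives only $\exp(-cV_2\log V_2)\approx\exp(-c\,V\log V/\log\log\log T)$, which near the top of range (ii), where $V\asymp\log\log T\log\log\log T$, is weaker than the claimed bound, whose exponent is of size $\log\log T(\log\log\log T)^2$; the thresholds for the short pieces must scale like $V/A$ with the regime-dependent $A$ (the paper uses $V/(10A)$), not like $V/\log\log\log T$. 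Finally, you omit the preliminary reduction (Lemma \ref{1st_inequality} with $x=\log T$) showing $\mathcal{N}(V;T,w)=0$ unless $V\ll\log T/\log\log T$, which is what keeps all the moment choices admissible.
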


\begin{proof} 
The proof of this lemma is a discrete version of Soundararajan's main theorem in \cite{S}, and is proved by combining the inequality for $\log|L(s,f)|$ in Lemma \ref{1st_inequality} with the mean-value estimates in Proposition \ref{dmv:primes}.  Throughout the proof, we assume that $T$ is large.

\smallskip

We define a parameter
\[
A = \left\{ \begin{array}{ll}
 \frac{1}{2}\log\log\log T, &\mbox{ if \ $10 \sqrt{\log \log T} \le V \le \log \log T$,} \\
  \frac{\log\log T}{2V}\log\log\log T, &\mbox{ if \ $ \log\log T < V \le \frac{\log \log T}{4}\log\log\log T$, }\\
 2, &\mbox{ if $V \ge \frac{\log \log T}{4}\log\log\log T$,}
       \end{array} \right.
\]
set $x=T^{A/V}$, and put $z=x^{1/\log \log T}$. 
Since $\mu_0 \ge \frac{1}{2}$, $x\le T^{1/2}$, and $0 \le \Re(w) \leq (\log T)^{-1}$, it follows that
\begin{equation}\label{zero_bound}
\frac{1}{2} \le \Re\big(\rho_{\!_f}\!+\!w \big) \leq \frac{1}{2}+\frac{1}{\log T} \le \frac{1}{2}+\frac{\mu_0}{\log x} 
\end{equation}
for any non-trivial zero $\rho_{\!_f} = \frac{1}{2}+i \gamma_{\!_f} $ of $L(s,f)$ with $T \le \gamma_{\!_f} \le 2T.$  Since $\mu_0 < \frac{3}{5}$, by choosing $x=\log T$  in Lemma \ref{1st_inequality} and using the bound $|\Lambda_f(n)|\le 2 \Lambda(n)$ to estimate the sum on the right-hand side of the inequality in the lemma, we see that
\[
\log|L(\sigma+it,f)| \le  \frac{8}{5}  \frac{\log T}{\log\log T} 
\]
for $\frac{1}{2} \leq \sigma \leq \frac{1}{2}+\frac{\mu_0}{\log x}$ and $T\le t \le 2T$. We may therefore assume that $V\le (8  \log T)/(5 \log \log T)$ as otherwise $\mathcal{N}(V;T,w)=0$.

\smallskip

Again since $\mu_0<3/5$, by equation \eqref{prime_inequality} in the remark following the proof of Lemma \ref{1st_inequality} and \eqref{zero_bound}, we have
\[
\log|L(\rho_{\!_f}\!+\!w,f)| \leq |\mathcal{S}_1(\rho_{\!_f})| + |\mathcal{S}_2(\rho_{\!_f})| + |\mathcal{S}_3(\rho_{\!_f})| + \frac{8V}{5A}
\]
for $T\le \gamma_{\!_f}\le 2T$ where
\[
\mathcal{S}_1(s)= \sum_{p\le z} \frac{\lambda_f(p)}{p^{s+\mu_0/\log x}} \frac{\log(x/p)}{\log x}, \quad \quad  \mathcal{S}_2(s) = \sum_{z<p\le x} \frac{\lambda_f(p)}{p^{s+\mu_0/\log x}} \frac{\log(x/p)}{\log x},  
\]
and
\[
\mathcal{S}_3(s)= \sum_{p\le \sqrt{x}} \frac{(\lambda_f(p^2)\!-\!\chi(p) )}{p^{2s+2\mu_0/\log x}} \frac{\log(\sqrt{x}/p)}{\log \sqrt{x}}.
\]
Therefore, if there is a zero $\rho_{\!_f}$ of $L(s,f)$ with $T< \gamma_{\!_f}\le 2T$ and $\log|L(\rho_{\!_f}+w,f)|>V$, then either
\begin{equation*} 
|\mathcal{S}_1(\rho_{\!_f})| \ge V \left(1-\frac{9}{5A}\right), \quad |\mathcal{S}_2(\rho_{\!_f})| \ge \frac{V}{10A}, \quad \text{or} \quad  |\mathcal{S}_3(\rho_{\!_f})| \ge \frac{V}{10A}.
\end{equation*}
For ease of notation, set $V_1 =V \left(1\!-\!\frac{9}{5A}\right)$ and $V_2=V_3 =\frac{V}{10A}.$  We now estimate the quantities
\[
   N_j(T;V) = \# \{ T < \gamma_f \le 2T \ | \ |\mathcal{S}_j(\rho_f)| \ge V_j \}
\]
for $j=1,2,3$ using the inequality 
\begin{equation}
   \label{Nibd}
   N_j(T;V)  \le V_{j}^{-2\ell} \sum_{ T < \gamma_f \le 2T}  |\mathcal{S}_j(\rho_f)|^{2\ell}.
\end{equation}
Here $\ell$ is a natural number to be chosen appropriately according to various cases. 

\smallskip

We first estimate $N_1(T;V)$.  By the first mean-value estimate in Proposition \ref{dmv:primes} and \eqref{eq:prime2} of Proposition \ref{sums}, we see that
\begin{eqnarray}\label{S1_bound}
\sum_{T < \gamma_{\!_f}\le 2T} \big|\mathcal{S}_1(\rho_{\!_f})\big|^{2\ell} \ll  \ell! \ \! T\log T \left(\sum_{p\leq z} \frac{|\lambda_f(p)|^2}{p} \right)^\ell \ll T\log T \sqrt{\ell} \left(\frac{\ell \log\log T}{e} \right)^\ell
\end{eqnarray}
for any integer $\ell$ with $z^\ell \leq T^{2/3}$.  By \eqref{Nibd}, we derive that
\begin{equation}
   \label{N1bd}
   N_1(T;V) \ll  T\log T \sqrt{\ell} \left(\frac{\ell \log\log T}{eV_1^2} \right)^\ell.
\end{equation}
We now choose $\ell$ based on the size of $V$. When $V \le (\log \log T)^2$ we take $\ell$ to be $\lfloor V_1^2/\log\log T \rfloor$, and when $V>(\log \log T)^2$ we take $\ell=\lfloor 10V \rfloor$ where $\lfloor x \rfloor$ denotes the greatest integer less than or equal to $x$. 
In either case, it may be verified that $z^\ell \leq T^{2/3}$ holds. 
 In the first case, by \eqref{N1bd}, we have 
\[
  N_1(T;V)  \ll   T\log T  \frac{V}{\sqrt{\log\log T}} \exp\left(-\frac{V_1^2}{\log\log T} \right)
\]
when $V \le (\log \log T)^2$. In the second case, the inequality in \eqref{N1bd} implies that
 \begin{align*}
  N_1(T;V) 
  & \ll T \log T  \exp  \Big( -4 V \log V \Big)
\end{align*}
since $\log V \ge 2 \log_3 T$ in the case when $V > (\log \log T)^2$. Combining both estimates, we deduce
\[
  N_1(T;V) 
\ll T\log T \left\{ \frac{V}{\sqrt{\log\log T}} \exp\left(-\frac{V_1^2}{\log\log T} \right) + \exp\Big(-4V\log V \Big) \right\}.
\]

\smallskip

Similarly, we estimate $N_2(T;V)$.  Again using the first mean-value estimate Proposition \ref{dmv:primes} and the bound $|\lambda_f(p)|\le 2$, we see that
\begin{eqnarray}\label{S2_bound}
\sum_{T < \gamma_{\!_f}\le 2T} \big|\mathcal{S}_2(\rho_{\!_f})\big|^{2\ell} \ll  \ell! \ \! T\log T \left(\sum_{z<p\leq x} \frac{|\lambda_f(p)|^2}{p} \right)^\ell \ll T\log T \left(4 \ell \log\log \log T \right)^\ell
\end{eqnarray}
for any integer positive integer $\ell \le (2V)/(3A)-1$. Choosing $\ell=\lfloor (2V)/(3A) \rfloor -1$, it follows from \eqref{Nibd} and \eqref{S2_bound} that
\[
 N_2(T;V) \ll T\log T  \exp\left(-\frac{V}{3A} \log V \right).
\]

\smallskip

It remains to estimate $N_3(T;V)$. By the second mean-value estimate in Proposition \ref{dmv:primes} and the bound $|\lambda_f(p^2)-\chi(p)|^2 \leq 4$, it follows that
\begin{equation}
   \label{S3_bound}
\sum_{T < \gamma_{\!_f}\le 2T} \big|\mathcal{S}_3(\rho_{\!_f})\big|^{2\ell} \ll \ell ! \ \!  T \log T \left(\sum_{p\le \sqrt{x}} \frac{4}{p^2} \right)^\ell \ll ( 2 \ell )^\ell \, T \log T 
\end{equation}
for any integer positive integer $\ell \le (2V)/(3A)-1$. Here we have used the fact that $\sum_{p} p^{-2} < \frac{1}{2}$. Note that this is smaller than the upper bound in \eqref{S2_bound}, so choosing $\ell=\lfloor (2V)/(3A) \rfloor -1$ it follows from \eqref{Nibd} and \eqref{S3_bound} that 
\[
 N_3(T;V) \ll T\log T  \exp\left(-\frac{V}{3A} \log V \right),
\]
as well. Combining our estimates for $N_1(T;V)$, $N_2(T;V)$, and $N_3(T;V)$, we deduce the inequality 
\[
    \mathcal{N}(V;T,w) \ll T\log T
    \Big( 
    \frac{V}{\sqrt{\log \log T}}  \exp\left(-\frac{V_1^2}{\log\log T} \right)   + \exp(-4V \log V) +  \exp\left(-\frac{V}{3A} \log V \right)
    \Big). 
\]
The lemma now follows from the definitions of $A$ and $V_1$ by considering separately each of the three indicated ranges of $V$.
\end{proof}

\smallskip

We now deduce Theorem \ref{shifted} as a consequence of the preceding lemma.

\begin{proof}[Proof of Theorem \ref{shifted}] We first establish the theorem when $0\le \Re(w) \le (\log T)^{-1}$ and then, using the functional equation for $L(s,f)$ and Stirling's formula for $\Gamma(s)$, extend the proof to all $w \in \mathbb{C}$ with $|w|\le 1$ and $|\Re(w)| \le (\log T)^{-1}$. Using the notation in Lemma \ref{value_dist},  we have
\begin{equation} \label{measure}
\begin{split}
\sum_{T<\gamma_{\!_f}\leq 2T} \big|L(\rho_{\!_f}\!+\! w,f)\big|^{2\ell} &= - \int_{-\infty}^{\infty} \exp(2 \ell V) \ \! d \mathcal{N}(V;T,w) 
\\
&= 2 \ell  \int_{-\infty}^{\infty} \exp(2 \ell V) \ \! \mathcal{N}(V;T,w) \ \! dV
\end{split}
\end{equation}
for any $\ell>0$ and $w \in \mathbb{C}$ with $|w|\le 1$ and $0<\Re(w) \le (\log T)^{-1}$.  Trivially, $ \mathcal{N}(V;T,w) \le N_{\!_f}(T) \ll T \log T$.
From the first two cases of Lemma \ref{value_dist}, we see that
\[
\mathcal{N}(V;T,w) \ll T(\log T)^{1+\varepsilon} \exp\left(\frac{-V^2}{\log\log T}\right) \quad \text{for } 3\leq V \leq 4 \ell \log\log T
\]
and, from the second two cases of Lemma \ref{value_dist}, that
\[
\mathcal{N}(V;T,w) \ll T(\log T)^{1+\varepsilon} \exp\left(-4 \ell V \right) \quad \text{for }  V > 4 \ell \log\log T.
\]
Using these three bounds for $\mathcal{N}(V;T,w)$ in the second integral on the right-hand side of \eqref{measure}, we have
\begin{equation*}
\begin{split}
   \sum_{T<\gamma_{\!_f}\leq 2T} \big|L(\rho_{\!_f}\!+\! w,f)\big|^{2\ell}    &\ll_{\ell} \
   T \log T  + T(\log T)^{1+\varepsilon}  \int_{3}^{4 \ell \log \log T}  \exp \Big( 2 \ell V - \frac{V^2}{\log \log T} \Big) dV \\
   & \quad \quad + T(\log T)^{1+\varepsilon}    \int_{4 \ell \log \log T}^{\infty} \exp\left(2 \ell V-4 \ell V \right)   dV.
\end{split}
\end{equation*}
After a little calculation, it follows that
\begin{equation}\label{w_right}
\sum_{T<\gamma_{\!_f}\leq 2T} \big|L(\rho_{\!_f}\!+\! w,f)\big|^{2\ell} \ll_{f, \ell,\varepsilon} T(\log T)^{\ell^2 + 1 +\varepsilon} 
\end{equation}
for any $\ell>0$ and $|w|\le 1$ with $0<\Re(w) \le (\log T)^{-1}$.

\smallskip

Now suppose that $|w|\le 1$ with $-(\log T)^{-1} \le \Re(w) <0.$ Then, assuming $\text{RH}_f$ and using Stirling's formula, we see that $|\psi_{\!_f}(\rho_{\!_f}+w)| \le C$ when $T<\gamma_{\!_f}\leq 2T$ for some absolute constant $C>0$ where $\psi_{\!_f}(s)$ corresponds to the function in the asymmetric form of the functional equation for $L(s,f)$ in \eqref{eq:func_eqn}. Thus, assuming $\text{RH}_f$, 
\[
 \big|L(\rho_{\!_f}\!+\! w,f)\big| = \big| \psi_{\!_f}(\rho_{\!_f}\!+\!w)L(1\!-\!\rho_{\!_f}\!-\! w,\bar{f})\big| \le C  \big|L(\rho_{\!_f}\!-\! \overline{w},f)\big| 
\]
when $T<\gamma_{\!_f}\leq 2T$. Here we have used that $1\!-\!\rho_{\!_f} = \overline{\rho_{\!_f}}$. It now follows from this inequality and \eqref{w_right} that 
\[
\sum_{T<\gamma_{\!_f}\leq 2T} \big|L(\rho_{\!_f}\!+\! w,f)\big|^{2\ell} \ll_{f,\ell,\varepsilon} T(\log T)^{\ell^2 + 1 +\varepsilon} 
\]
for any $\ell>0$ and $|w|\le 1$ with $|\Re(w)| \le (\log T)^{-1}$. Theorem \ref{shifted} now follows by summing this estimate over the dyadic intervals $(T/2,T], (T/4,T/2], (T/8,T/4], \ldots$.
\end{proof}

\section{Theorem \ref{shifted} implies Theorem \ref{2kth_moment}} We now deduce Theorem \ref{2kth_moment} from Theorem \ref{shifted} using the following lemma.

\begin{lemma} \label{Cauchy_integral_formula}
 Let $m \in \mathbb{N}$, $\ell \ge \frac{1}{2}$,  and  $0<R < \frac{1}{2}$. Then 
\begin{equation}
\sum_{0<\gamma_{_f}\le T} \big|L^{(m)}(\rho_{_f},f) \big|^{2\ell} \leq \Big(\frac{m!}{R^m} \Big)^{2\ell}\cdot \left[ \max_{|w|\leq R} \ \sum_{0<\gamma_{_f}\le T} \big|L(\rho_{_f}\!+\!w) \big|^{2\ell} dt \right].
\end{equation}
\end{lemma}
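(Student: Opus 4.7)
The plan is to apply Cauchy's integral formula for the $m$-th derivative at each zero $\rho_{\!_f}$ and then use Jensen's inequality (since $2\ell \ge 1$) to move the $2\ell$-power inside the resulting contour integral before summing.

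First, since $L(s,f)$ is entire, for each non-trivial zero $\rho_{\!_f}$ with $0 < \gamma_{\!_f} \le T$, Cauchy's integral formula for derivatives gives
\[
  L^{(m)}(\rho_{\!_f},f) \ = \ \frac{m!}{2 \pi i} \oint_{|w|=R} \frac{L(\rho_{\!_f}\!+\!w,f)}{w^{m+1}}\, dw \ = \ \frac{m!}{2\pi R^m} \int_0^{2\pi} L(\rho_{\!_f}\!+\!Re^{i\theta},f) e^{-im\theta}\, d\theta,
\]
after parametrizing $w = Re^{i\theta}$. Taking absolute values, we get
\[
  \big|L^{(m)}(\rho_{\!_f},f)\big| \ \le \ \frac{m!}{R^m} \cdot \frac{1}{2\pi} \int_0^{2\pi} \big|L(\rho_{\!_f}\!+\!Re^{i\theta},f)\big|\, d\theta.
\]

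Next, since $2\ell \ge 1$, the function $x \mapsto x^{2\ell}$ is convex on $[0,\infty)$, so Jensen's inequality applied to the normalized measure $\tfrac{1}{2\pi}d\theta$ on $[0,2\pi]$ yields
\[
  \left( \frac{1}{2\pi} \int_0^{2\pi} \big|L(\rho_{\!_f}\!+\!Re^{i\theta},f)\big|\, d\theta \right)^{\!2\ell} \ \le \ \frac{1}{2\pi} \int_0^{2\pi} \big|L(\rho_{\!_f}\!+\!Re^{i\theta},f)\big|^{2\ell}\, d\theta.
\]
Combining these two inequalities gives a pointwise bound (in $\rho_{\!_f}$) of
\[
  \big|L^{(m)}(\rho_{\!_f},f)\big|^{2\ell} \ \le \ \left(\frac{m!}{R^m}\right)^{\!2\ell} \cdot \frac{1}{2\pi} \int_0^{2\pi} \big|L(\rho_{\!_f}\!+\!Re^{i\theta},f)\big|^{2\ell}\, d\theta.
\]

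Finally, summing over all zeros with $0 < \gamma_{\!_f} \le T$ and interchanging the finite sum with the integral, we obtain
\[
  \sum_{0<\gamma_{\!_f}\le T}\!\!\! \big|L^{(m)}(\rho_{\!_f},f)\big|^{2\ell} \ \le \ \left(\frac{m!}{R^m}\right)^{\!2\ell} \cdot \frac{1}{2\pi} \int_0^{2\pi} \left( \sum_{0<\gamma_{\!_f}\le T}\!\!\! \big|L(\rho_{\!_f}\!+\!Re^{i\theta},f)\big|^{2\ell} \right) d\theta.
\]
Bounding the inner sum by its maximum over $|w|=R$ (hence over $|w| \le R$) and noting that the $\theta$-integral of a constant is $2\pi$, the lemma follows. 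There is no real obstacle here; the only subtlety is that $2\ell \ge 1$ is exactly the hypothesis needed to justify the Jensen step, which is why the statement restricts to $\ell \ge \tfrac{1}{2}$.
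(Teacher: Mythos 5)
Your proposal is correct and follows essentially the same route as the paper: Cauchy's integral formula on the circle of radius $R$, then moving the $2\ell$-power inside the contour integral (your Jensen step is just the paper's H\"older step applied to the normalized measure $\tfrac{1}{2\pi}d\theta$, both valid precisely because $2\ell\ge 1$), then interchanging the sum over zeros with the integral and bounding by the maximum over $|w|\le R$. No gap.
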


\begin{proof} The proof is similar to the proof of Lemma 4.5 in \cite{M2}. By Cauchy's integral formula, we have
\begin{equation}\label{Cauchy_int}
\sum_{0<\gamma_{_f}\le T}  \big|L^{(m)}(\rho_{_f},f) \big|^{2\ell}  = \Big(\frac{m!}{2\pi} \Big)^{2\ell} \cdot \sum_{0<\gamma_{_f}\le T} \left| \int_{\mathscr{C}_R} \frac{L(\rho_{_f}\!+\!w,f)}{w^{m+1}} \ \! dw \right|^{2\ell}  \\
\end{equation}
where $\mathscr{C}_R$ denotes the positively oriented circle in the complex plane centered at $0$ of radius $R$. The circumference of $\mathscr{C}_R$ is $2\pi R$ and $|w|=R$ for $w\in \mathscr{C}_R$, it follows from \eqref{Cauchy_int} that
\[
    \sum_{0<\gamma_{_f}\le T}\big|L^{(m)}(\rho_{_f}\!,f)\big|^{2\ell}  \le
     \Big(\frac{m!}{2\pi} \Big)^{2\ell} \cdot \frac{1}{R^{2\ell(m+1)} } \cdot \sum_{0<\gamma_{_f}\le T}    \left\{ \int_{\mathscr{C}_R} \big|L(\rho_{_f}\!+\!w,f)\big| \ \! |dw|  \right\}^{2\ell}.  
\]
By an application of H\"{o}lder's inequality, it follows that for $\ell > \frac{1}{2}$ 
\[
    \left\{ \int_{\mathscr{C}_R} \big|L(\rho_{_f}\!+\!w,f)\big| \ \! |dw|  \right\}^{2\ell}  
 \le  \left\{ \int_{\mathscr{C}_R} |dw| \right\}^{2\ell-1} \cdot \left\{   \int_{\mathscr{C}_R} \big|L(\rho_{_f}\!+\!w,f)\big|^{2\ell} \ \! |dw| 
 \right\}.
\]
Note that this inequality holds trivially in the case $\ell=\frac{1}{2}$.  Therefore, for $\ell \ge \frac{1}{2},$ we have
\begin{equation*}
\begin{split}
\sum_{0<\gamma_{_f}\le T}\big|L^{(m)}(\rho_{_f}\!,f)\big|^{2\ell} 
&\leq \Big(\frac{m!}{2\pi} \Big)^{2\ell} \cdot \frac{(2\pi R)^{2\ell-1}}{R^{2\ell(m+1)} } \cdot \sum_{0<\gamma_{_f}\le T} \left\{ \int_{\mathscr{C}_R} \big|L(\rho_{_f}\!+\!w,f)\big|^{2\ell} \ \! |dw| \right\}
\\
& = \Big(\frac{m!}{R^m} \Big)^{2\ell} \cdot \frac{1}{2\pi R} \cdot \int_{\mathscr{C}_R} \left\{ \sum_{0<\gamma_{_f}\le T}  \big|L(\rho_{_f}\!+\!w,f)\big|^{2\ell} \right\}  \ \! |dw| 
\\
&\leq \Big(\frac{m!}{R^m} \Big)^{2\ell}  \cdot \left[ \max_{|w|\leq R} \ \sum_{0<\gamma_{_f}\le T} \big|L(\rho_{_f}\!+\!w) \big|^{2\ell}  \right]
\end{split}
\end{equation*}
as claimed. This completes the proof of the lemma.
\end{proof}

\begin{proof}[Proof of Theorem \ref{2kth_moment}] Let $\ell \ge \frac{1}{2}$ and set $R=(\log T)^{-1}$.  Then it follows from Theorem \ref{shifted} and Lemma \ref{Cauchy_integral_formula} that
\[
\sum_{0<\gamma_{_f}\le T} \big|L^{(m)}(\rho_{_f},f) \big|^{2\ell}  \ll_{f,m,\ell,\varepsilon} T (\log T)^{\ell^2+2\ell m+1+\varepsilon}
\]
for any  $m\in \mathbb{N}$ and $\varepsilon>0$ arbitrary. This proves Theorem \ref{2kth_moment}.
\end{proof}

\bigskip

\noindent{\it Acknowledgements.} We thank Matt Young and David Farmer for some helpful comments, and Freydoon Shahidi for his correspondences concerning the estimate for  $\mathscr{D}_f(x)$ in Proposition \ref{sums}. The first author benefitted from conservations with Brian Conrey and Hung Bui during the early stages of this project. He thanks them for their encouragement.

\bibliographystyle{amsplain}

\end{document}